\documentclass[reqno, 10pt]{amsart}
\usepackage{amssymb}
\usepackage{amsmath}
\usepackage[mathscr]{euscript}
\usepackage{hyperref}
\usepackage{graphicx}
\usepackage[normalem]{ulem}

\usepackage{amssymb}
\usepackage{hyperref}
\RequirePackage[dvipsnames]{xcolor} 
\definecolor{halfgray}{gray}{0.55} 
\definecolor{webgreen}{rgb}{0,0.5,0}
\definecolor{webbrown}{rgb}{.6,0,0} \hypersetup{%
  colorlinks=true, linktocpage=true, pdfstartpage=3,
  pdfstartview=FitV,%
  breaklinks=true, pdfpagemode=UseNone, pageanchor=true,
  pdfpagemode=UseOutlines,%
  plainpages=false, bookmarksnumbered, bookmarksopen=true,
  bookmarksopenlevel=1,%
  hypertexnames=true,
  pdfhighlight=/O,
  urlcolor=webbrown, linkcolor=RoyalBlue,
  citecolor=webgreen, 
  pdftitle={},%
  pdfauthor={},%
  pdfsubject={2000 MAthematical Subject Classification: Primary:},%
  pdfkeywords={},%
  pdfcreator={pdfLaTeX},%
  pdfproducer={LaTeX with hyperref}%
}

\usepackage{enumitem}


\theoremstyle{plain}
\newtheorem{theorem}{Theorem}[section]
\newtheorem{lemma}[theorem]{Lemma}
\newtheorem{corollary}[theorem]{Corollary}
\newtheorem{proposition}[theorem]{Proposition}

\theoremstyle{definition}

\newtheorem{remark}[theorem]{Remark}
\newtheorem{example}[theorem]{Example}
\DeclareMathOperator{\Ima}{Im}
\DeclareMathOperator{\Ker}{Ker}
\DeclareMathOperator{\Id}{Id}
\DeclareMathOperator*{\essinf}{ess\,inf}

\def\R{\mathbb{R}}
\def\N{\mathbb{N}}
\def\Reg{\mathcal{R}^{\mu}}
\def\B{\mathcal{B}}
\newcommand{\norm}[1]{{\left\lVert \, #1 \, \right\rVert}}

\begin{document}

\title
[Periodic approximation of exceptional Lyapunov exponents]
{Periodic approximation of exceptional Lyapunov exponents for semi-invertible operator cocycles}

\author{Lucas Backes}

\address{Departamento de Matem\'atica, Universidade Federal do Rio Grande do Sul, Av. Bento Gon\c{c}alves 9500, CEP 91509-900, Porto Alegre, RS, Brazil.}
\email{lhbackes@impa.br}

\author{Davor Dragi\v cevi\'c}
\address{Department of Mathematics, University of Rijeka, 51000 Rijeka, Croatia}
\email{ddragicevic@math.uniri.hr}

\date{\today}

\keywords{Semi-invertible operator cocycles, Lyapunov exponents, periodic points, approximation}
\subjclass[2010]{Primary: 37H15, 37A20; Secondary: 37D25}

\begin{abstract}

We prove that for semi-invertible and H\"{o}lder continuous linear cocycles $A$ acting on an arbitrary Banach space and defined over a base space that satisfies the Anosov Closing Property, all exceptional Lyapunov exponents of $A$ with respect to 
an ergodic invariant measure for base dynamics can be approximated with Lyapunov exponents of $A$ with respect to ergodic measures supported on periodic orbits. Our result is applicable
to a wide class of infinite-dimensional dynamical systems. 
\end{abstract}

\maketitle

\section{Introduction}
Let $M$ be a compact metric space and $f\colon M \to M$ a homeomorphism such that $(M, f)$ satisfies the so-called Anosov Closing property, which essentially means that there are many periodic orbits for $f$ in $M$. Furthermore, let $A$ be a linear cocycle over $(M, f)$ that takes values in the space of all bounded linear operators acting on an arbitrary Banach space $\B$. Finally, let $\mu$ be any ergodic $f$-invariant Borel probability measure on $M$. The main objective of the present paper is to show that if $A$ is sufficiently regular (as a map on $M$) and if it satisfies the so-called quasi-compactness property with respect to $\mu$, then all exceptional Lyapunov exponents of $A$ with respect to $\mu$ can be approximated by Lyapunov exponents of $A$ with respect to some ergodic $f$-invariant Borel measure which is supported on a periodic orbit for $f$.

 We emphasize that the assumption that $A$ is quasi-compact with respect to $\mu$ is made to ensure that one can apply the most recent versions of the multiplicative ergodic theorem (MET), which in turn give the set of Lyapunov exponents of $A$ with respect to $\mu$. Consequently, the problem of approximating Lyapunov exponents of $A$ with respect to $\mu$ becomes well-posed. Starting essentially with the pioneering work of Ruelle~\cite{R} who considered cocycles of operators on a Hilbert space, many authors have been interested in the problem of establishing MET for cocycles of operators acting on Banach spaces. In particular, Ma\~n\'e~\cite{M} established MET for cocycles of compact and injective operators on a Banach space. His results were generalized by Thieullen~\cite{Thi87}, who was able to replace the assumption that the operators are compact with a substantially weaker assumption that the cocycle is quasi-compact. More recently, Froyland, Lloyd and Quas~\cite{FLQ10, FLQ13}, Gonz\'alez-Tokman and Quas~\cite{GTQ} and Blumenthal~\cite{AB} were able to remove the assumption present in both~\cite{M} and~\cite{Thi87} (as well as in more recent works such as~\cite{LL}) that the cocycle consists of injective operators. In addition, they have also been able to relax certain regularity assumptions for the cocycle. 
Although the  present paper addresses the problem of the approximation of Lyapunov exponents for quasi-compact cocycles,  we emphasize that our results are new even
 in a particular case of compact cocycles which are not invertible.

In his seminal paper~\cite{Kal11}, Kalinin established (as a tool in proving the main result of~\cite{Kal11}, which is the Liv\v{s}ic theorem) the approximation result described in the first paragraph for cocycles of invertible matrices. This was generalized to cocycles of not necessarily invertible matrices by the first author~\cite{Bac}. Furthermore, Kalinin and Sadovskaya~\cite{KS} (see also~\cite{KS2}) established the approximation result for the largest and smallest Lyapunov exponent of an invertible cocycle acting on an arbitrary Banach space (see Remark~\ref{ksr} for details). 
In the present paper, we go one step further by considering not necessarily invertible cocycles and by establishing the approximation result for all exceptional
Lyapunov exponents and not only for the largest one. 
The importance of our results steems from the fact that in the context of infinite-dimensional dynamics, the invertibility assumption for cocycle is way too restrictive. 
Indeed, the main motivation for papers~\cite{FLQ13, GTQ} was to establish the version of MET that would enable us to study cocycles of transfer operators that are rarely invertible (or even injective). Furthermore, in the recent paper by Blumenthal and Young~\cite{BY} in which the authors extend many results of the smooth ergodic theory to the case of maps acting on Banach spaces, the derivative cocycle is not assumed to be invertible.

The approach and the arguments in the present paper are inspired by those in~\cite{Kal11}. Indeed, when obtaining the approximation property of the largest Lyapunov exponent we follow closely the approach developed in~\cite{Kal11} (which in turn inspired arguments in~\cite{Bac, KS}). However, the nontrivial adaptation of arguments from~\cite{Kal11} occurs when we try to establish the desired approximation property of other Lyapunov exponents. In the classical finite-dimensional case this is done (see~\cite{Kal11, Bac}) by using the so-called exterior powers of the cocycle. On the other hand, such a construction doesn't exist in the infinite-dimensional setting. This forced us to adjust the method of estimating the largest Lyapunov exponent devised in~\cite{Kal11} to fit the problem of estimating other Lyapunov exponents.

The paper is organized as follows. In Section~\ref{sec: statements} we introduce terminology, recall basic notions and important results (such as MET) and state the main
result of our paper. In Section~\ref{LN}, we introduce the concept of Lyapunov norms for operator cocycles which play an important auxiliary tool in our arguments. In Section~\ref{MT} we present the proof of our main result. Finally, in Section~\ref{APP} we discuss various applications of our work in the context of the infinite-dimensional dynamics.

\section{Preliminaries}\label{sec: statements}

Let $(M,d)$ be a compact metric space, $\mu$ a probability  measure defined on the Borel subsets of $M$ and $f: M \to M $ a $\mu$-preserving homeomorphism. Furthermore, 
assume also that $\mu$ is ergodic.

We say that $f$ satisfies the \textit{Anosov Closing property} if there exist $C_1 ,\varepsilon _0 ,\theta >0$ such that if $z\in M$ satisfies $d(f^n(z),z)<\varepsilon _0$ then there exists a periodic point $p\in M$ such that $f^n(p)=p$ and
\begin{displaymath}
d(f^j(z),f^j(p))\leq C_1 e^{-\theta \min\lbrace j, n-j\rbrace}d(f^n(z),z),
\end{displaymath}
for every $j=0,1,\ldots ,n$. We note that shifts of finite type, basic pieces of Axiom A diffeomorphisms and more generally, hyperbolic homeomorphisms are particular examples of maps satisfying the Anosov Closing property. 
We refer to~\cite[Corollary 6.4.17.]{KH95} for details. 

\subsection{Semi-invertible operator cocycles}

Let $(\B,\norm{\cdot})$ be a Banach space and let  $B(\B,\B)$ denote  the space of all bounded linear maps from $\B$ to itself. We recall that $B(\B,\B)$  is  a Banach space with respect to the norm
\[
 \norm{T} =\sup \lbrace \norm{Tv}/\norm{v};\; \norm{v}\neq 0 \rbrace, \quad T\in B(\B,\B).
\]
Although we use the same notation for the norms on $\B$ and $B(\B,\B)$ this will not cause any confusion. 
Finally,  consider a map  $A:M\to B(\B,\B)$.

The \emph{semi-invertible operator cocycle} (or just \textit{cocycle} for short) generated by $A$ over $f$ is  defined as the map $A:\mathbb{N}\times M\to B(\B,\B)$ given by
\begin{equation}\label{def:cocycles}
A^n(x):=A(n, x)=
\left\{
	\begin{array}{ll}
		A(f^{n-1}(x))\ldots A(f(x))A(x)  & \mbox{if } n>0 \\
		\Id & \mbox{if } n=0 \\
	\end{array}
\right.
\end{equation}
for all $x\in M$. 
The term `semi-invertible' refers to the fact that the action of the underlying dynamical system $f$ is assumed to be an invertible transformation while the action on the fibers given by $A$ may fail to be invertible. 

\subsection{Multiplicative ergodic theorem}

We begin  by recalling some terminology. Let $B_\B(0,1)$ denote the unit ball in $\B$ centered at $0$.  For an arbitrary  $T\in B(\B,\B)$, let $\|T\|_\text{ic}$ be the
infimum over all $r>0$ with the property that $T(B_\B(0,1))$ can be covered by finitely many open balls of radius $r$. It is easy to show that:
\begin{equation}\label{ic1}
 \|T\|_\text{ic} \le \lVert T\rVert, \quad \text{for every $T\in B(\B, \B)$}
\end{equation}
and 
\begin{equation}\label{ic2}
 \|T_1 T_2\|_\text{ic} \le \|T_1\|_\text{ic} \cdot \|T_2\|_\text{ic}, \quad \text{for every $T_1, T_2\in B(\B,\B)$.}
\end{equation}
Hence, \eqref{ic2} together with the subadditive ergodic theorem implies that there exists $\kappa(\mu)\in [-\infty, \infty)$ such that
\[
 \kappa(\mu)=\lim_{n\to \infty}\frac 1n \log \lVert A^n(x)\rVert_{ic} \quad \text{for $\mu$-a.e. $x\in M$.}
\]
Observe that if $A$ takes values in a family of compact operators on $\B$, we have that $\kappa(\mu)=-\infty$. Indeed, in this case one has that $\lVert A^n(x)\rVert_{ic}=0$ for each $n$ which readily implies that
$\kappa(\mu)=-\infty$.

In addition, by  using again the subadditive ergodic theorem together with the  subadditivity of the operator norm, we have that there exists $\lambda(\mu)\in [-\infty, \infty)$ such that
\[
 \lambda(\mu)=\lim_{n\to \infty}\frac 1n \log \lVert A^n(x)\rVert \quad \text{for $\mu$-a.e. $x\in M$.}
\]
Note  that~\eqref{ic1} implies that $\kappa(\mu) \le \lambda(\mu)$. We say that the cocycle $A$ is \emph{quasi-compact} with respect to $\mu$ if $\kappa(\mu)<\lambda(\mu)$.
The following result from~\cite[Lemma C.3]{GTQ} gives useful sufficient conditions under which the cocycle is quasi-compact.

\begin{proposition}\label{prop1}
Take $A:M\to B(\B,\B)$.
Let $\mathcal B'=(\mathcal B', \lvert \cdot \rvert)$ be a Banach space such that $\mathcal B\subset \mathcal B'$ and with the property that the inclusion $(\mathcal B, \lVert \cdot \rVert) \hookrightarrow (\mathcal B', \lvert \cdot \rvert)$ is compact. Furthermore, suppose that each $A(x)$ can be extended to a bounded operator on $(\mathcal B', \lvert \cdot \rvert)$ and that there
exist Borel-measurable functions $\alpha, \beta, \gamma \colon M \to (0,\infty)$ such that:
\begin{enumerate}
\item for $\mu$-a.e. $x\in M$ and every $v\in \mathcal B$,
\begin{equation}\label{LY1}
\lVert A(x)v\rVert \le \alpha(x) \lVert v\rVert+\beta(x)\lvert v\rvert;
\end{equation}
\item for $\mu$-a.e. $x\in M$,
\begin{equation}\label{LY2}
\lVert A(x)\rVert \le \gamma(x);
\end{equation}
\item \begin{equation}\label{LY3}
\int \log \alpha \, d\mu < \lambda(\mu) \quad \text{and} \quad \int \log \gamma \, d\mu < \infty.
\end{equation}
\end{enumerate}
Then, $\kappa(\mu) \le \int \log \alpha \, d\mu$. In particular, $A$ is quasi-compact with respect to $\mu$. 
\end{proposition}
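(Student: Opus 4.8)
The plan is to estimate the index of compactness $\|A^n(x)\|_{\mathrm{ic}}$ directly, by iterating the Lasota--Yorke inequality \eqref{LY1} along the orbit of $x$ and exploiting the compactness of the inclusion $(\mathcal B,\norm{\cdot})\hookrightarrow(\mathcal B',\lvert\cdot\rvert)$. Recall that, by the subadditive ergodic theorem, $\kappa(\mu)=\lim_{n\to\infty}\tfrac1n\log\|A^n(x)\|_{\mathrm{ic}}$ for $\mu$-a.e.\ $x$; so it suffices to produce, for $\mu$-a.e.\ $x$ and every $n\geq 1$, a bound of the form $\|A^n(x)\|_{\mathrm{ic}}\le C\prod_{i=0}^{n-1}\alpha(f^ix)$ with $C$ independent of $n$. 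Indeed, taking $\tfrac1n\log$, letting $n\to\infty$ and invoking the Birkhoff ergodic theorem for $\log\alpha$ (which is $\mu$-integrable in the extended sense, as $\int\log\alpha\,d\mu<\lambda(\mu)<\infty$) then gives $\kappa(\mu)\le\int\log\alpha\,d\mu$, and quasi-compactness follows at once from the first inequality in \eqref{LY3}.

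To obtain such a bound I would first iterate \eqref{LY1}. Writing $A^{k+1}(x)=A(f^kx)A^k(x)$ and applying \eqref{LY1} repeatedly yields, for every $u\in\mathcal B$ and every $n\geq 1$,
\[
\norm{A^n(x)u}\;\le\;\Big(\prod_{i=0}^{n-1}\alpha(f^ix)\Big)\norm{u}\;+\;\sum_{k=0}^{n-1}\Big(\prod_{i=k+1}^{n-1}\alpha(f^ix)\Big)\beta(f^kx)\,\bigl\lvert A^k(x)u\bigr\rvert.
\]
Here the terms $\lvert A^k(x)u\rvert$ are meaningful because each $A(f^ix)$ extends to a bounded operator on $\mathcal B'$, so that $\lvert A^k(x)u\rvert\le c_k(x)\lvert u\rvert$ for finite constants $c_k(x)$ (products of the $\mathcal B'$-operator norms of the relevant extensions). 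Set $P_n(x)=\prod_{i=0}^{n-1}\alpha(f^ix)$ and $S_n(x)=\sum_{k=0}^{n-1}\big(\prod_{i=k+1}^{n-1}\alpha(f^ix)\big)\beta(f^kx)c_k(x)<\infty$.

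The next step uses compactness. Fix $n$; since $B_{\mathcal B}(0,1)$ is relatively compact in $(\mathcal B',\lvert\cdot\rvert)$, for every $\delta>0$ it can be covered by finitely many $\lvert\cdot\rvert$-balls of radius $\delta$ centered at points $v_1,\dots,v_N\in B_{\mathcal B}(0,1)$. If $v\in B_{\mathcal B}(0,1)$ and $\lvert v-v_j\rvert<\delta$, then $\norm{v-v_j}\le 2$ and $\lvert v-v_j\rvert<\delta$, so the displayed inequality applied to $u=v-v_j$ gives $\norm{A^n(x)v-A^n(x)v_j}\le 2P_n(x)+\delta\,S_n(x)$. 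Choosing $\delta=P_n(x)/S_n(x)$ (the case $S_n(x)=0$ being trivial via \eqref{ic1}) shows that $A^n(x)(B_{\mathcal B}(0,1))$ is covered by $N$ $\norm{\cdot}$-balls of radius $3P_n(x)$, i.e.\ $\|A^n(x)\|_{\mathrm{ic}}\le 3P_n(x)$ — the required estimate with $C=3$.

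I expect the only delicate points to be the following two. First, one must arrange that the error introduced by the finite cover is negligible compared with $P_n(x)$; this is precisely why $\delta$ is allowed to depend on $n$ and $x$ (the sum $S_n(x)$ grows with $n$), and it is the step that needs a little care. Second, one must justify the quantities $\lvert A^k(x)u\rvert$: it has to be checked that the extension to $\mathcal B'$ of the composition $A^k(x)$ agrees on $\mathcal B$ with the given operator and is bounded — which is what the hypothesis on $\mathcal B'$ provides — and it is worth noting that only the finiteness of the $c_k(x)$ is used, never a bound uniform in $k$, since all of them are absorbed into the choice of $\delta$. Everything else is routine, and a standard truncation of $\alpha$ disposes of the degenerate case $\int\log\alpha\,d\mu=-\infty$, in which the assertion reads $\kappa(\mu)=-\infty$.
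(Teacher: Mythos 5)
The paper does not actually prove Proposition~\ref{prop1}; it simply quotes it from~\cite[Lemma~C.3]{GTQ}. Your argument is correct and is the standard Hennion-type mechanism behind such statements (and, in essence, the one used in~\cite{GTQ}): iterate the strong Lasota--Yorke inequality along the orbit, use compactness of the embedding $\mathcal B\hookrightarrow\mathcal B'$ to produce a finite $\lvert\cdot\rvert$-net of the unit ball, and kill the weak-norm error term by letting the net radius $\delta$ shrink with $n$, which yields $\|A^n(x)\|_{\mathrm{ic}}\le 3\prod_{i=0}^{n-1}\alpha(f^i x)$ and hence $\kappa(\mu)\le\int\log\alpha\,d\mu$ via Birkhoff. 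Two small housekeeping remarks: by $f$-invariance of $\mu$ one may (and should, silently) replace the full-measure set on which \eqref{LY1} holds by its countable intersection of preimages, so that the inequality is available at every $f^i(x)$ simultaneously for $\mu$-a.e.\ $x$; and since $\alpha,\beta>0$ and $c_0(x)=1$, in fact $S_n(x)>0$ always, so the ``degenerate case'' you set aside never occurs. Finally, note that \eqref{LY2} is not used in your argument at all; its role is merely to guarantee $\lambda(\mu)<\infty$, so that the strict inequality $\int\log\alpha\,d\mu<\lambda(\mu)$ in \eqref{LY3} is a genuine hypothesis.
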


\begin{remark}
In the context of cocycles of transfer operators, i.e. when $A(x)$ is the transfer operator associated to some map $T_x$ for each $x\in M$, the condition~\eqref{LY1} is called strong Lasota-Yorke inequality while~\eqref{LY2} is called weak Lasota-Yorke inequality. We note that in this setting one has that $\lambda(\mu)=0$. 

For example, when each $T_x$ is a piecewise expanding map on the unit interval  $[0, 1]$, one can show that under mild assumptions~\eqref{LY1}, \eqref{LY2} and~\eqref{LY3} 
hold with $(\mathcal B, \lVert \cdot \rVert)=(BV, \lVert \cdot \rVert_{BV})$  and $(\mathcal B', \lvert \cdot \rvert)=(L^1, \lVert \cdot \rVert_{L^1})$.
Here, $BV$ denotes the space of all functions of bounded variation on $[0, 1]$ with the corresponding norm $\lVert \cdot \rVert_{BV}$ 
which is defined to be the sum of the $L^1$ norm of the 
function and its total variation.
 We refer to~\cite[Section 2.3.1]{DFGTV} for a detailed discussion. 
\end{remark}

Before stating the version of the multiplicative ergodic theorem established in~\cite{FLQ13}, we recall the notion of $\mu$-continuity.  Let $Z$ be an arbitrary Banach space. We say that a map  $\Phi \colon M \to Z$ is \emph{$\mu$-continuous} if there exists an increasing sequence $(K_n)_{n\in \mathbb N}$ of compact subsets of $M$ satisfying $\mu(\cup_n K_n)=1$ and such that 
$\Phi \rvert_{K_n} \colon K_n \to Z$ is continuous for each $n\in \N$. 
\begin{theorem}\label{thm:Oseledets}
Assume  that the cocycle $A\colon M\to B(\B,\B)$ is $\mu$-continuous and  quasi-compact with respect to $\mu$. Then, we have the following:
 \begin{enumerate}
  \item there exists $l=l(\mu)\in [1, \infty]$ and a sequence of numbers $(\lambda_i(\mu))_{i=1}^l$ such that 
  \[
   \lambda(\mu)=\lambda_1(\mu) >\lambda_2(\mu) >\ldots >\lambda_i(\mu) > \ldots >\kappa(\mu).
  \]
Furthermore, if $l=\infty$ we have that $\lim_{i\to \infty} \lambda_i(\mu) =\kappa(\mu)$;
\item there exists a Borel subset $\Reg \subset M$ such that $\mu(\Reg)=1$ and for each $x\in \Reg$ and $i\in \mathbb N \cap [1, l]$, there is a unique and measurable decomposition
\begin{equation}\label{os}
 \B=\bigoplus_{j=1}^i E_j(x) \oplus V_{i+1}(x),
\end{equation}
where $E_j(x)$ are  finite-dimensional subspaces of $\B$ and $A(x)E_j(x)=E_j(f(x))$. Furthermore, $V_{i+1}(x)$ are closed subspaces of $\B$ and \\ $A(x)V_{i+1}(x)\subset V_{i+1}(f(x))$;
\item for each  $x\in \Reg$  and   $v\in E_j(x)\setminus \{0\}$, we have 
\[
 \lim_{n\to \infty} \frac 1 n \log \lVert A^n(x)v\rVert =\lambda_j(\mu).
\]
In addition, for every $v\in V_{i+1}(x)$,
\[
 \limsup_{n\to \infty} \frac 1 n \log \lVert A^n(x)v\rVert \le \lambda_{i+1}(\mu).
\]

 \end{enumerate}
The numbers $\lambda_i(\mu)$ are called  \emph{exceptional Lyapunov exponents} of the cocycle $A$ with respect to $\mu$ and the dimensions $d_i(\mu)=\dim E_i(x)$ are called \emph{multiplicities} of $\lambda_i(\mu)$. In addition, the decomposition~\eqref{os} is called the \emph{Oseledets splitting}. Finally, the points in $\Reg$ are called \emph{$\mu$-regular} (or simply \emph{regular}).
\end{theorem}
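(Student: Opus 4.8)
Since Theorem~\ref{thm:Oseledets} is the semi-invertible multiplicative ergodic theorem of~\cite{FLQ13}, I only outline the strategy one would follow to establish it. The plan is to read off the spectrum from the growth of \emph{singular values} and then to build the Oseledets splitting by a combination of a pullback (growing subspace) construction for the finite-dimensional fast part and a filtration by growth rates for the slow part.

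First I would set up the relevant subadditive cocycles. For $T\in B(\B,\B)$ and $k\ge 1$ let $s_k(T)$ denote the $k$-th singular value of $T$ (for instance its $k$-th approximation number), and put $\Delta_k(T)=s_1(T)\cdots s_k(T)$, a quantity that measures the maximal growth of $k$-dimensional volume under $T$. One checks that $\Delta_k$ is submultiplicative along the cocycle, so the subadditive ergodic theorem yields, for $\mu$-a.e.\ $x$, limits $\Lambda_k=\lim_n \tfrac1n\log\Delta_k(A^n(x))$; setting $\ell_k=\Lambda_k-\Lambda_{k-1}$ produces a non-increasing sequence of candidate exponents whose largest term is $\lambda(\mu)$. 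Since by~\eqref{ic1}--\eqref{ic2} the tail of the singular values of $A^n(x)$ is governed by $\lVert A^n(x)\rVert_{\mathrm{ic}}$, hence asymptotically by $\kappa(\mu)$, quasi-compactness $\kappa(\mu)<\lambda(\mu)$ forces $\ell_k>\kappa(\mu)$ for only finitely many $k$ below any prescribed level. This is exactly what yields the discrete spectrum $\lambda_1(\mu)>\lambda_2(\mu)>\cdots>\kappa(\mu)$ (accumulating at $\kappa(\mu)$ when infinite) together with finite multiplicities $d_i(\mu)$.

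Next I would construct, for each $i$ with $\lambda_i(\mu)>\kappa(\mu)$, the fast equivariant subspace $U_i(x)=\bigoplus_{j\le i}E_j(x)$ of dimension $m_i=d_1(\mu)+\cdots+d_i(\mu)$. Using invertibility of the base map $f$, let $U_i^{(n)}(x)$ be the $m_i$-dimensional subspace on which $A^n(f^{-n}x)$ is most expanded. The gap $\lambda_i(\mu)>\lambda_{i+1}(\mu)$, via standard estimates for the angle between the most- and least-expanded subspaces of $A^n(f^{-n}x)$, shows that $(U_i^{(n)}(x))_n$ is Cauchy in the Grassmannian of $m_i$-planes, hence converges to a subspace $U_i(x)$; one then verifies $A(x)U_i(x)=U_i(f(x))$ and that nonzero vectors of $U_i(x)$ grow at a rate in $[\lambda_i(\mu),\lambda_1(\mu)]$, the refinement into the individual $E_j(x)$ being obtained by running the same argument for consecutive indices and intersecting. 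The slow part is treated dually: set $V_{i+1}(x)=\{v\in\B:\ \limsup_n\tfrac1n\log\lVert A^n(x)v\rVert\le\lambda_{i+1}(\mu)\}$, and show it is a closed, $A$-equivariant subspace with $\B=U_i(x)\oplus V_{i+1}(x)$ and with the norms of the associated projections controlled by a tempered function along the orbit. Finally, measurability of the whole decomposition~\eqref{os} is obtained from the $\mu$-continuity hypothesis: restricting to the compact exhausting sets $K_n$ and applying a measurable selection argument gives Borel dependence of $E_j(\cdot)$ and $V_{i+1}(\cdot)$ on $x\in\Reg$.

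The main obstacle is precisely the passage from finite to infinite dimensions. In the classical matrix setting one works with exterior powers and exploits compactness of Grassmannians; in a general Banach space neither tool is available, so quasi-compactness must be invoked repeatedly to guarantee that (a) each $E_j(x)$ is genuinely finite-dimensional, (b) the complements $V_{i+1}(x)$ are closed of finite codimension $m_i$, and (c) the projections along~\eqref{os} do not grow super-exponentially. The delicate quantitative input is the bound $\Delta_{m_i+1}(A^n(x))/\Delta_{m_i}(A^n(x))\le e^{n(\lambda_{i+1}(\mu)+o(1))}$, which simultaneously closes the Cauchy estimate for the fast subspaces and makes the filtration $\{V_{i+1}\}$ behave. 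Establishing this together with the tempered control of the projections and the measurability of the splitting forms the technical core of the argument.
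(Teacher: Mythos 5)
The paper does not prove Theorem~\ref{thm:Oseledets}: it is the semi-invertible multiplicative ergodic theorem quoted from~\cite{FLQ13} (building on~\cite{GTQ}) and used here as a black box, so there is no internal proof to compare against. Judging your sketch on its own terms, the overall architecture---a submultiplicative quantity indexed by $k$ to read off exponents, quasi-compactness to discretize the spectrum above $\kappa(\mu)$ with finite multiplicities, a pullback construction along the invertible base to produce equivariant fast subspaces, a $\limsup$-growth filtration for the slow subspaces, and a measurability step---does reflect the strategy of the cited works.

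There is, however, a genuine gap at the very first step, and it is precisely the one you flag as the main obstacle. In a general Banach space the product $\Delta_k(T)=s_1(T)\cdots s_k(T)$ of approximation numbers is \emph{not} submultiplicative: the Horn-type inequality $\Delta_k(ST)\le\Delta_k(S)\Delta_k(T)$ is a Hilbert-space statement that relies on exterior powers, and the one-sided Banach-space estimate $a_{m+n-1}(ST)\le a_m(S)a_n(T)$ does not imply it. Consequently the subadditive ergodic theorem cannot be applied to $\log\Delta_k(A^n(\cdot))$, and the candidate exponents $\ell_k=\Lambda_k-\Lambda_{k-1}$ are not well-founded as written. Your final paragraph correctly observes that exterior powers are unavailable, yet your first paragraph uses a quantity whose key property depends on exactly that missing structure---an internal inconsistency, not just an imprecision. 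The proofs in~\cite{FLQ13, GTQ, AB} circumvent this by defining a genuinely submultiplicative ``maximal $k$-dimensional volume growth'' functional (a Banach-space determinant of $T$ restricted to $k$-planes, with the supremum taken over all $k$-planes), proving submultiplicativity of that quantity directly, and extracting both the exponents and the Grassmannian Cauchy estimate for the pulled-back fast subspaces from it. Likewise, measurability of the splitting is not an immediate consequence of the $\mu$-continuity of $A$ via measurable selection; it requires a separate argument carried out in the cited sources. To close your sketch, the volume functional must replace $\Delta_k$ throughout.
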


We denote by 
$$\gamma _1(\mu)\geq \gamma _2(\mu)\geq \gamma _3(\mu)\geq \ldots$$
the Lyapunov exponents counted with multiplicities of $A$ with respect to the measure $\mu$. This means that $\gamma _i(\mu)=\lambda_1(\mu)$ for $i=1,\ldots ,d_1(\mu)$, $\gamma _i(\mu)=\lambda_2(\mu)$ for $i=d_1(\mu)+1,\ldots , d_1(\mu)+d_2(\mu)$ and so on. When there is no risk of ambiguity, we suppress the index $\mu$ from the previous objects. Moreover, when the $f$-invariant measure $\mu$ is supported on the orbit of some periodic point $p$ we simply write $\lambda_i(p)$ and $\gamma_i(p)$ for its Lyapunov exponents and Lyapunov exponents counted with multiplicities, respectively. Furthermore, given $x\in M$ and $v\in \B$ we denote by 
$$\lambda(x,v) = \limsup_{n\to\infty} \frac{1}{n}\log \norm{A^n(x)v}$$
the \emph{Lyapunov exponent of $A$ at $x$ in the direction $v$}.

\begin{remark}\label{referee}
Since the arguments in our paper will heavily rely on the  measurability of the  Oseledets splitting~\eqref{os}, we would like to explain what
exactly it means  for~\eqref{os} to be measurable.
Let $\mathcal G(\B)$ denote the set of all closed subspaces $F$ of $\B$ that are complemented, i.e. such that there exists a closed subspace $\tilde F$ of $\B$ with the property that
$\B=F\oplus \tilde F$. We recall that each finite-dimensional subspace $F$ of $\B$ belongs to $\mathcal G(\B)$. It turns out that one can equip $\mathcal G(\B)$ with the 
structure of a  metric space (see~\cite[Section 2.1.2]{BY}) and thus in particular it makes sense to discuss the measurability of the map that is defined on some measurable space
and that takes values in $\mathcal G(\B)$. 

Now we observe that all subspaces of $\B$ that appear in~\eqref{os} belong to $\mathcal G(\B)$. Hence, we can associate to~\eqref{os} the following maps
\begin{equation}\label{maps}
 E_1 \colon \Reg \to \mathcal G(\B), \ldots, E_i \colon \Reg \to \mathcal G(\B) \quad \text{and} \quad V_{i+1} \colon \Reg \to \mathcal G(\B).
\end{equation}
We now say that~\eqref{os} is measurable if all maps in~\eqref{maps} are measurable.  Moreover, those maps are also $\mu$-continuous as a consequence of a deep result by Fremlin~\cite[Theorem 4.1]{KP} (see also~\cite[Remark 3.5.]{BY}). 
\end{remark}

\subsection{Main result}
We say that 
$A:M\to B(\B,\B)$ is  an  \emph{$\alpha$-H\"{o}lder continuous map} if there  exists a constant $C_2>0$ such that
\begin{displaymath}
\norm{A(x)-A(y)} \leq C_2 d(x,y)^{\alpha},
\end{displaymath}
for all $x,y\in M$.   Clearly, if $A:M\to B(\B,\B)$ is  an $\alpha$-H\"{o}lder continuous map, then $A$ is also $\mu$-continuous and consequently Theorem~\ref{thm:Oseledets} is applicable. 
We are now in the position to state the main result of our paper.

\begin{theorem}\label{the: main} 
Let $f: M\to M $ be a homeomorphism satisfying the Anosov Closing property, $\mu$ an ergodic $f$-invariant probability measure and $A:M\to B(\B,\B)$ an $\alpha$-H\"{o}lder continuous map that is quasi-compact
with respect to $\mu$. Then, given $s\in \mathbb{N}\cap [1,l(\mu)]$ there exists a sequence of periodic points $(p_k)_{k\in \mathbb{N}}$ such that
\begin{displaymath}
\gamma_i (p_k)\xrightarrow{k\to +\infty} \gamma _i (\mu) \quad \text{for every $i\in \{1,\ldots ,d_1(\mu)+\ldots+ d_s(\mu)\}$,}
\end{displaymath} 
 where
$d_i(\mu)=\dim E_i(x)$.

\end{theorem}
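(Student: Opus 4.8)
The plan is to build the periodic points $p_k$ by applying the Anosov Closing property to long, carefully chosen pieces of a single regular orbit, and to control the Lyapunov exponents along these periodic orbits using Lyapunov norms adapted to the Oseledets splitting. First I would fix a regular point $x\in\Reg$ and, for a small $\varepsilon>0$, use the multiplicative ergodic theorem together with Birkhoff's theorem to find a set of positive measure on which the finite-time growth rates $\frac1n\log\lVert A^n(x)v\rVert$ along vectors in $E_1(x)\oplus\cdots\oplus E_s(x)$, as well as the operator norms $\lVert A^n(x)\rVert_{ic}$ and the angles/complementation constants of the splitting, are all $\varepsilon$-close to their limiting values; by Poincar\'e recurrence one then gets infinitely many return times $n_k\to\infty$ with $d(f^{n_k}(x),x)<\varepsilon_0$, and the Anosov Closing property produces periodic points $p_k$ with $f^{n_k}(p_k)=p_k$ whose orbits shadow $\{x,f(x),\dots,f^{n_k}(x)\}$ exponentially (with the $\min\{j,n_k-j\}$ gain).

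The core of the argument is then a comparison, along the shadowing orbit, between $A^{n_k}(p_k)$ and $A^{n_k}(x)$. Using $\alpha$-H\"older continuity of $A$ and the exponential shadowing bound, the standard telescoping estimate (as in Kalinin~\cite{Kal11} and Backes~\cite{Bac}) shows that $\lVert A^{n_k}(p_k)-A^{n_k}(x)\rVert$ and, more importantly, the corresponding differences restricted to the relevant invariant subspaces, are controlled; combined with the Lyapunov-norm machinery of Section~\ref{LN} this will give
\[
\bigl|\gamma_i(p_k)-\gamma_i(\mu)\bigr|\le C\varepsilon
\]
for all $i\le d_1(\mu)+\cdots+d_s(\mu)$, for $k$ large. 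The subtlety is that, unlike the finite-dimensional case, we cannot pass to exterior powers to reduce the statement about the $i$-th exponent to a statement about a top exponent. Instead I would adapt Kalinin's top-exponent estimate directly: to capture $\gamma_i(\mu)$ for $i$ in the block of $\lambda_j(\mu)$, one works with the restriction of the cocycle to the finite-dimensional bundle $E_1\oplus\cdots\oplus E_j$ (or with a suitable quotient), where a genuine linear-algebra argument is available, and transfers the growth estimates to $p_k$ via shadowing. Here one uses that the Oseledets subspaces are $\mu$-continuous (Remark~\ref{referee}), so that on the compact sets $K_n$ the splitting, and hence the projections onto $E_1(x)\oplus\cdots\oplus E_s(x)$, vary continuously and can be transported to nearby periodic points.

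The main obstacle I anticipate is precisely this last point: making rigorous the transport of the finite-dimensional Oseledets data from the regular orbit of $x$ to the periodic orbit of $p_k$, and showing that the spaces spanned by $A^{n_k}(p_k)$-images of the shadowing vectors are genuinely close (in the Grassmannian metric on $\mathcal G(\B)$) to the corresponding Oseledets spaces over $x$, uniformly enough to pin down the full block of exponents $\gamma_1(p_k),\dots,\gamma_{d_1+\cdots+d_s}(p_k)$ rather than just their sum or their maximum. This requires a careful quantitative version of the contraction-on-cones / Lyapunov-norm argument: one must show that the action of $A^{n_k}(p_k)$ on the $s$-dimensional family of directions shadowing $E_1(x)\oplus\cdots\oplus E_s(x)$ has singular values whose logarithmic growth rates converge, as $k\to\infty$, to $\lambda_1(\mu),\dots,\lambda_s(\mu)$ with the correct multiplicities, while the complementary directions $V_{s+1}$ contribute nothing larger. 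Once the finite-dimensional reduction and the quantitative shadowing estimate in Lyapunov norm are in place, letting $\varepsilon\to 0$ along a subsequence of the $p_k$ yields the claimed convergence.
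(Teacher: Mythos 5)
Your setup is right and matches the paper's: pick a regular point in the basin of $\mu$ on a Lusin set where the Lyapunov norm and Oseledets splitting are continuous, use Poincar\'e recurrence to get return times $n_k\to\infty$, and invoke Anosov Closing to produce periodic points $p_k$ of period $n_k$ shadowing the segment of the $x$-orbit with exponential gain in $\min\{j,n_k-j\}$. You also correctly identify the two real difficulties -- the absence of exterior powers in infinite dimensions, and the need to pin down the whole block of exponents $\gamma_1(p_k),\dots,\gamma_{d_1+\cdots+d_s}(p_k)$ rather than just the top one. But the mechanism you propose to overcome them is where the gap lies, and it is not what the paper does.

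First, you never supply an upper bound. The paper obtains $\limsup_k\sum_{i\le m}\gamma_i(p_k)\le\sum_{i\le m}\gamma_i(\mu)$ from a genuine structural fact: by~\cite[Lemma A.3]{DFGTV} the partial sums $\mu\mapsto\gamma_1(\mu)+\cdots+\gamma_m(\mu)$ can be written as infima of integrals of a subadditive sequence of continuous functions and are therefore upper semicontinuous in $\mu$ (Lemma~\ref{lem: upper semic}); since $\mu_{p_k}\to\mu$ weakly this gives the needed one-sided bound for free. Nothing in your sketch plays this role.

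Second, for the lower bound your plan -- restrict or quotient the cocycle to the finite-dimensional bundle $E_1\oplus\cdots\oplus E_j$ and compare Oseledets subspaces in the Grassmannian $\mathcal G(\B)$ -- is exactly the step you flag as the ``main obstacle,'' and it is not resolved; indeed the cocycle $A(p_k)$ does not preserve the bundle $E_1(f^j(x))\oplus\cdots\oplus E_j(f^j(x))$, so such a restriction is not well defined, and the paper never attempts it. What the paper actually does is run, for each level $h\in\{1,\dots,s\}$, a cone argument in the \emph{full} Banach space with the cone $C^{j,h}_\gamma$ built around the single slow subspace $E_h(f^j(x))$ relative to $V_{h+1}(f^j(x))$, measured in Lyapunov norm. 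The key structural point (Lemma~\ref{lem: main 2} and its higher-level analogues) is an either/or dichotomy: for $u\in C^{j,h}_0\setminus\{0\}$, either $u$ already lies in the lower cone $C^{j,h-1}_0$, in which case the previous level's argument applies, or the perturbed operator $A(f^j(p_k))$ keeps $u$ inside $C^{j+1,h}_\gamma$ while expanding its $E_h$-component by at least $e^{\lambda_h-2\delta}$. Iterating this cascade around the closed orbit of $p_k$ gives $\lambda(p_k,u)\ge\lambda_h-3\delta$ for every $u\in C^{0,h}_0$, hence a lower bound $\lambda_{i^k_h}(p_k)\ge\lambda_h-3\delta$ where $i^k_h=\max\{i:V_i(p_k)\cap C^{0,h}_0\ne\{0\}\}$. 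The multiplicity bookkeeping (Corollaries~\ref{cor: dimenions}, \ref{cor: ineq dimensions}, \ref{sase}) then combines these lower bounds with the upper-semicontinuity lemma to force $\dim(E_1(p_k)\oplus\cdots\oplus E_{i^k_h}(p_k))=d_1(\mu)+\cdots+d_h(\mu)$, which is what converts a bound on a single exponent into a bound on the whole block. Without this either/or cone cascade and without the semicontinuity-based upper bound, the proposal does not close.
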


\begin{remark}
We stress that without the assumption that the cocycle is quasi-compact, 
it is not always possible to get  an approximation result in the spirit of Theorem~\ref{the: main}  even if the cocycle takes values in the space of 
bounded and invertible linear operators on a Banach space. 
Indeed, Kalinin and Sadovskaya \cite[Proposition 1.5]{KS} 
presented an example of a locally constant operator cocycle $A$ over a full shift on two symbols and an ergodic invariant measure $\mu$ such that $\lambda_1(\mu)>\sup_{\mu _p}\lambda_1 (\mu_p)$, where the supremum is taken over all invariant measures $\mu_p$ supported on periodic orbits.
Related examples were also constructed by Hurtado~\cite{Hur}.
\end{remark}

Let us discuss in detail the relationship between Theorem~\ref{the: main} and various related  results in the literature. 
\begin{remark}\label{ksr}
Observe that whenever $\B$ is finite-dimensional and the cocycle is invertible, we have that $\kappa(\mu)=-\infty$ and that 
  the set of exceptional Lyapunov exponents given by Theorem~\ref{thm:Oseledets} coincides with the set of Lyapunov exponents given by
 the classical Oseledets multiplicative ergodic theorem. Therefore, in this setting,  Theorem~\ref{the: main} reduces to~\cite[Theorem 1.4.]{Kal11}.
 
 Recently, the first author~\cite{Bac} has generalized~\cite[Theorem 1.4.]{Kal11} to the case of semi-invertible cocycles of matrices, i.e. $\B$ is again assumed  to
  be finite-dimensional but $A(x)$ doesn't have to be an invertible matrix. In this setting, the family of exceptional Lyapunov exponents can differ from the family of Lyapunov exponents
  given by the version of the Oseledets multiplicative ergodic theorem established in~\cite{FLQ10}. More precisely, let $\Lambda_1$ denote the set of exceptional Lyapunov exponents in the sense
  of Theorem~\ref{thm:Oseledets} and let $\Lambda_2$ denote the set of Lyapunov exponents in the sense of~\cite{FLQ10}. Then, 
  \[
   \Lambda_1=\begin{cases}
          \Lambda_2 & \text{if $-\infty \notin \Lambda_2$;}\\
          \Lambda_2\setminus \{-\infty \}& \text{if $-\infty \in \Lambda_2$.}
             \end{cases}
\]
Since the main result of~\cite{Bac} establishes the 
 desired  approximation property of elements in $\Lambda_2$ including $-\infty$ (if present), we conclude that Theorem~\ref{the: main} provides only a partial generalization of the main result in~\cite{Bac}.

In addition, Kalinin and Sadovskaya~\cite{KS} established the approximation property similar to that  in Theorem~\ref{the: main} for the largest Lyapunov exponent of an arbitrary invertible H\"{o}lder continuous cocycle $A\colon M \to B(\B,\B)$. More precisely, they proved that for each $\epsilon >0$ there exists a periodic point $p\in M$ satisfying
$f^k(p)=p$ and such that 
\[
\bigg{\lvert} \lambda_1 (\mu)-\frac 1 k \log \lVert A^k(q)\rVert \bigg{\rvert} < \epsilon.
\]
However, the above result is weaker than the approximation property for $\lambda_1(\mu)=\gamma_1(\mu)$ established in Theorem~\ref{the: main} (see the discussion in~\cite{KS} after Remark 1.5.).
Moreover, our Theorem~\ref{the: main} deals with \emph{all} exceptional Lyapunov exponents (and not only with the largest one) of a \emph{semi-invertible} quasi-compact cocycle acting on a Banach space and thus represents a  natural extension of the results from~\cite{Kal11, Bac} described above. 

Finally, in their recent paper~\cite{KS2}, Kalinin and Sadovskaya established results similar to those in~\cite{Kal11} and~\cite{KS} for cocycles over  non-uniformly hyperbolic dynamical systems.  Although these systems will in general fail to satisfy the Anosov closing property, they will exhibit a  similar type of behaviour (provided by the so-called Katok's closing lemma).
It turns out that this weaker form of closing property is still sufficient to adapt the arguments from~\cite{Kal11, KS} and obtain the desired approximation property of Lyapunov exponents in this setting. 
\end{remark}

\subsection{Examples}
We now discuss some concrete examples of non-compact cocycles that satisfy all of our assumptions. 

\begin{example}
Assume that $T_1, \ldots, T_k \colon [0, 1] \to [0, 1]$ are piecewise expanding maps such that 
\[
\delta_i:=\essinf_{x\in [0, 1]}\lvert T_i'(x)\rvert >2 \quad \text{for $i\in \{1, \ldots, k\}$.}
\]
Let us denote by $L_i$ the transfer operator associated to $T_i$. We note that $L_i$ acts on the $BV$ space. Furthemore, 
let $M=\{1, \ldots, k\}^{\mathbb Z}$ with the standard topology and consider a two-sided shift $f\colon M \to M$. Note that $(M, f)$ satisfies Anosov closing property. Furthermore, we define a cocycle $A$ on $M$ of operators acting on $BV$ by
\[
A(x)=L_{x_0} \quad \text{for $x=(x_n)_{n\in \mathbb Z} \in M$.}
\]
It is straightforward to verify that $A$ is H\"{o}lder continuous. On the other hand, one can also show (see~\cite[Section 2.3.1]{DFGTV}) that~\eqref{LY1} holds with a  constant $\alpha \in (0, 1)$ and that in fact $A$ is quasi-compact with respect to any $f$-invariant ergodic Borel probability measure. 
\end{example}
The following example is somewhat of different nature. 
\begin{example}
In their recent remarkable paper~\cite{BY},  Blumenthal and Young extend various results from smooth ergodic theory to the case of maps acting on Banach spaces. More precisely, let  $\mathcal B$ be an arbitrary Banach space and consider a $C^2$ Frechet differentiable map $f\colon \mathcal B \to \mathcal B$ with the property
that there exists an compact, $f$-invariant set $\mathcal A \subset \mathcal B$.  In addition, the results in~\cite{BY} assume the existence of an ergodic, $f$-invariant measure $\mu$ such that supp $\mu=\mathcal A$. 

Under the additional assumption that $(\mathcal A, f\lvert_{\mathcal A})$ satisfies Anosov closing property, the results of the present paper  can be used to study the derivative cocycle associated to $f$ which is  given by $A(x)=Df(x)$.
\end{example}

\section{Lyapunov norm}\label{LN}
In order to estimate the growth of the cocycle $A$ along an orbit we introduce the notion of \textit{Lyapunov norm} for 
quasi-compact  operator cocycles  and describe some of its properties. 
This is based on a similar notion introduced in \cite{Bac} in the finite dimensional setting which in turn was based on a similar notion for \emph{invertible cocycles}
that goes back to the work of Pesin (see for instance \cite{BP07}).

\subsection{Lyapunov norm}
Let us use the same notation as in the statement of Theorem~\ref{thm:Oseledets}.  Given $x\in \Reg$, $s\in \mathbb N \cap [1, l(\mu)]$,  $i\in \{1,\ldots , s \}$ and $n\in \mathbb{N}$, 
we consider the map 
$$A^n(f^{-n}(x))_{\mid E_{i}(f^{-n}(x))}: E_{i}(f^{-n}(x))\to E_{i}(x)$$
which is invertible and let us denote its inverse by $\left( A^n(f^{-n}(x))\right)^{-1}_i$. 
Now, for every $n\in \mathbb{Z}$ we can define  the linear map $A^n_i(x):E_{i}(x)\to E_{i}(f^n(x))$  by
\begin{displaymath}
	A^n_i(x)u=\left\{\begin{array}{cc}
		A^n(x)_{\mid E_{i}(x)}u & \mbox{if} \quad n\geq 0\\
		\left( A^{-n}(f^n(x))\right)^{-1}_i u & \mbox{if}\quad n<0.
	\end{array}
	\right.
\end{displaymath}
It is easy to verify (see~\cite[p4.]{Bac}) that 
\begin{equation}
A^{m+n}_i(x)=A^n_i(f^m(x))A^m_i(x), \quad \text{for every $m,n\in \mathbb{Z}$.}
\end{equation}
We are now ready to define the \emph{Lyapunov norm of level s} associated to the operator cocycle $A$ at a regular point $x\in \Reg$: 
 we may write each  $u\in \B$ uniquely as \[u=u_1+\ldots +u_s+u_{s+1}, \] where $u_i \in E_{i}(x)$ for  $i\in \{1, \ldots, s\}$ and $u_{s+1}\in V_{s+1}(x) $. Thus, given $\delta >0$ we define its \textit{$\delta$-Lyapunov norm of level s} by
\begin{displaymath}
\norm{u}_{x,\delta}=\sum _{i=1}^{s+1}\norm{u_i}_{x,\delta,i},
\end{displaymath}
where
\begin{equation}\label{eq: def Lyap i norm}
\norm{u_i}_{x,\delta,i}= \sum _{n\in \mathbb{Z}}\norm{A^{n}_i(x)u_i} e^{-\lambda _i n -\delta \mid n\mid} \quad \text{ $i\in \{1, \ldots, s\}$}
\end{equation}
 and 
\begin{equation}\label{eq: def Lyap s+1 norm}
\norm{u_{s+1}}_{x,\delta,s+1}= \sum _{n=0}^{+\infty}\norm{A^{n}(x)\tilde{u}}e^{-\tilde{\lambda} n}.
\end{equation}
Here $\tilde{\lambda}$ is any fixed number smaller than $\lambda_s(\mu)$ with the property that $[\tilde{\lambda},\lambda_s(\mu))\cap \Lambda(\mu)=\emptyset$, where $\Lambda(\mu)$ 
denotes the set of all exceptional Lyapunov exponents of $A$ with respect to $\mu$.  
Observe that such number $\tilde{\lambda}$ does exist since  by Theorem~\ref{thm:Oseledets} elements of $\Lambda(\mu)$ can only accumulate at $\kappa(\mu)$. 
Moreover, both series \eqref{eq: def Lyap i norm} and \eqref{eq: def Lyap s+1 norm} converge.  Indeed, this follows readily from the following lemma.

\begin{lemma}\label{lem: auxil 1}
For every $u\in E_{i}(x)\setminus \{0\}$,
$$\lim _{n\to \pm \infty}\frac{1}{n}\log \norm{A^n_i(x)u}=\lambda _i.$$
Moreover, there exists $\epsilon> 0$ such that for every $\tilde{u}\in V_{s+1}(x)$,
$$\limsup _{n\to + \infty}\frac{1}{n}\log \norm{A^n(x)\tilde{u}}<\tilde{\lambda}-\epsilon.$$
\end{lemma}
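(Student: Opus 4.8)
The plan is to derive both statements from the Oseledets splitting provided by Theorem~\ref{thm:Oseledets}, bootstrapping the one-sided (forward) growth estimates it supplies into two-sided ones for the finite-dimensional pieces $E_i(x)$, and into a genuinely strict exponential gap for the tail $V_{s+1}(x)$.

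For the first claim, fix $u\in E_i(x)\setminus\{0\}$. The forward limit $\lim_{n\to+\infty}\frac1n\log\norm{A^n_i(x)u}=\lambda_i$ is immediate from part (3) of Theorem~\ref{thm:Oseledets}, since $A^n_i(x)u=A^n(x)u$ for $n\geq0$. For the backward limit, set $v=A^{-n}_i(x)u\in E_i(f^{-n}(x))$, so that $A^n_i(f^{-n}(x))v=u$, i.e.\ $A^n(f^{-n}(x))v=u$ because the restriction of $A^n(f^{-n}(x))$ to $E_i(f^{-n}(x))$ is a bijection onto $E_i(x)$. I would like to apply part (3) at the point $f^{-n}(x)$, but this requires a uniform comparison along the backward orbit. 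Here I would invoke that $f^{-n}(x)\in\Reg$ for all $n$ (the regular set is $f$-invariant), and use the standard consequence of the MET that, for regular $x$, the angles between the Oseledets subspaces and the norms of the restricted cocycle grow subexponentially along the whole orbit; equivalently, the function $x\mapsto\norm{(A^1(x)|_{E_i(x)})^{-1}}$ composed with $f^{-n}$ has subexponential growth. Concretely, writing $A^{-n}_i(x)=\prod_{k=1}^{n}(A^1_i(f^{-k}(x)))^{-1}$ and using that $\frac1n\log\norm{(A^1_i(f^{-k}(x)))^{-1}}\to -\lambda_i$ in a Cesàro/Birkhoff sense, one gets $\limsup_{n\to+\infty}\frac1n\log\norm{A^{-n}_i(x)u}\leq-\lambda_i$; the matching lower bound follows from the cocycle identity $A^n_i(f^{-n}(x))A^{-n}_i(x)u=u$ together with the forward estimate applied at $f^{-n}(x)$ after controlling the subexponential error. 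This is the expected main obstacle: making the backward estimate rigorous requires a careful bookkeeping of the subexponential nonuniformity along the orbit, which is the price of working over an abstract measure rather than a periodic one. I expect this to parallel closely the finite-dimensional argument in \cite[p.~4]{Bac}, the only difference being that $E_i(x)$ is a general finite-dimensional subspace of a Banach space rather than of $\R^d$, which changes nothing essential.

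For the second claim, the point of $\tilde\lambda$ is that it was chosen so that $[\tilde\lambda,\lambda_s(\mu))\cap\Lambda(\mu)=\emptyset$; since $\Lambda(\mu)$ can only accumulate at $\kappa(\mu)<\tilde\lambda$, the interval $[\kappa(\mu),\tilde\lambda)$ likewise contains at most finitely many exponents, hence there is a genuine gap: there exists $\epsilon>0$ with no exponent in $(\lambda_{s+1}(\mu),\lambda_{s+1}(\mu)+\text{something}]$ — more precisely, we may enlarge things so that $\lambda_{s+1}(\mu)<\tilde\lambda-\epsilon$ when $l>s$, or when $l=s$ use that the whole tail decays at rate $\kappa(\mu)$. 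Part (3) of Theorem~\ref{thm:Oseledets} gives, for $v\in V_{s+1}(x)$, $\limsup_{n\to+\infty}\frac1n\log\norm{A^n(x)v}\leq\lambda_{s+1}(\mu)$, and choosing $\epsilon>0$ small enough that $\lambda_{s+1}(\mu)<\tilde\lambda-\epsilon$ (possible because $\lambda_{s+1}(\mu)<\lambda_s(\mu)$ and $\tilde\lambda$ can be taken arbitrarily close to $\lambda_s(\mu)$ from below while still avoiding $\Lambda(\mu)$; if necessary one decreases $\tilde\lambda$ past finitely many exponents) yields the claimed strict inequality. One subtlety: the estimate in Theorem~\ref{thm:Oseledets}(3) for $V_{i+1}$ is a $\limsup$ for each fixed $v$, not uniform over the unit sphere of $V_{s+1}(x)$; but the statement of the lemma is also pointwise in $\tilde u$, so no uniformity is needed here.

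Finally, I would note that the convergence of the defining series \eqref{eq: def Lyap i norm} and \eqref{eq: def Lyap s+1 norm} is an immediate corollary: the $n$-th term of \eqref{eq: def Lyap i norm} is $\norm{A^n_i(x)u_i}e^{-\lambda_i n-\delta|n|}$, which by the first claim behaves like $e^{(\lambda_i+o(1))n}e^{-\lambda_i n-\delta|n|}=e^{-\delta|n|+o(|n|)}$ as $n\to\pm\infty$, hence is summable; and the $n$-th term of \eqref{eq: def Lyap s+1 norm} is $\norm{A^n(x)\tilde u}e^{-\tilde\lambda n}$, which by the second claim is eventually bounded by $e^{(\tilde\lambda-\epsilon)n}e^{-\tilde\lambda n}=e^{-\epsilon n}$, hence summable. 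This is why the lemma is stated with the strict inequality and the explicit $\epsilon$ rather than merely with a $\limsup\leq\tilde\lambda$.
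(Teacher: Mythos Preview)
Your proposal is essentially correct and matches the paper's approach: the paper dispatches the first assertion by citing \cite[Lemma~20]{FLQ13} and the second by invoking the choice of $\tilde\lambda$ together with Theorem~\ref{thm:Oseledets}, which is precisely what you unpack.

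One genuine imprecision in your sketch of the backward limit: Birkhoff applied to the one-step quantity $\log\norm{(A^1_i)^{-1}}$ yields the average $\int\log\norm{(A^1_i)^{-1}}\,d\mu$, which is in general \emph{strictly larger} than $-\lambda_i$, so the product bound $\norm{A^{-n}_i(x)}\le\prod_{k=1}^n\norm{(A^1_i(f^{-k}(x)))^{-1}}$ does not give the sharp upper bound $\limsup\le-\lambda_i$ you claim. The correct tool is Kingman's subadditive ergodic theorem applied to $n\mapsto\log\norm{A^{-n}_i(x)}$, or equivalently the MET applied to the inverse cocycle $B(x)=A_i(f^{-1}(x))^{-1}$ over $f^{-1}$ on the finite-dimensional bundle $E_i$, whose sole Lyapunov exponent is $-\lambda_i$; this is what \cite[Lemma~20]{FLQ13} does, and it requires the integrability $\int\log^+\norm{(A_i)^{-1}}\,d\mu<\infty$ that the paper establishes separately in Lemma~\ref{inv_integ}. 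Your lower-bound argument via the cocycle identity $A^n_i(f^{-n}(x))A^{-n}_i(x)u=u$ is fine, and your treatment of the second claim (that $\lambda_{s+1}(\mu)<\tilde\lambda$ forces a uniform gap $\epsilon$) is exactly the paper's.
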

\begin{proof} 
The first assertion is a consequence of \cite[Lemma 20.]{FLQ13}, while the second claim follows easily from the choice of $\tilde{\lambda}$ and the properties of the Oseledets splitting
given by Theorem~\ref{thm:Oseledets}.
\end{proof}

One can easily verify that   $\norm{\cdot}_{x,\delta}$ is indeed a norm on $\B$. When there is no risk of ambiguity, we will  write $\norm{\cdot}_x$ and $\norm{\cdot}_{x,i}$ 
instead of $\norm{\cdot}_{x,\delta}$ and $\norm{\cdot}_{x,\delta,i}$ respectively, and call it simply \emph{Lyapunov norm}.

Given a bounded linear operator $T\in B(\B,\B)$, its Lyapunov norm with respect to $x,y\in \Reg$ is defined by
\begin{displaymath}
\norm{T}_{y\leftarrow x}=\sup \{\norm{Tu}_{y}/\norm{u}_{x}; \; u\in \B\setminus \{0\} \}.
\end{displaymath}

\subsection{Auxiliary result} In the next section we are going to describe some properties of the Lyapunov norm. 
In order to do so, we need the following auxiliary result which is a version of Theorem 2 from \cite{DrF} for cocycles acting on Banach spaces. 

\begin{proposition}\label{lem: version of theo 2} 
Given $x\in \Reg$, let us consider the splitting 
$$\B=E_1(x)\oplus\ldots \oplus E_{s}(x)\oplus V_{s+1}(x).$$
There exists a full $\mu$-measure set $\Omega\subset \Reg$ so that for each $\varepsilon >0$ small enough there are function $C, K:M\to (0,+\infty)$ satisfying for every $x\in \Omega$, 
the following properties:
\begin{itemize}
\item[i)] for each $1\leq i\leq s$, $u\in E_i(x)$ and $n\in \mathbb{Z}$,
 $$\frac{1}{C(x)}e^{\lambda _i n-\varepsilon \mid n\mid }\norm{u}\leq \norm{A_i^n(x)u}\leq C(x)e^{\lambda _i n+\varepsilon \mid n\mid}\norm{u};$$
 \item[ii)] for each $\tilde{u}\in V_{s+1}(x)$ and $n\in \mathbb{N}$,
 $$ \norm{A^n(x)u}\leq C(x)e^{(\tilde{\lambda}-\varepsilon )n}\norm{u};$$
 \item[iii)] $C(f^n(x))\leq C(x)e^{\varepsilon \mid n\mid}$ for every $n\in \mathbb{Z}$.
 \item[iv)] $K(f^n(x))\leq K(x)e^{\varepsilon \mid n\mid}$ for every $n\in \mathbb{Z}$ and
 \[
  \lVert u\rVert \le K(x)\lVert u+v\rVert \quad \text{and} \quad \lVert v\rVert \le K(x)\lVert u+v\rVert,
 \]
for $u\in E_1(x)\oplus\ldots \oplus E_{s}(x)$ and $v\in V_{s+1}(x)$.
\end{itemize}
\end{proposition}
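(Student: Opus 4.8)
The plan is to reproduce, in the present Banach-space setting, the tempering-kernel argument behind~\cite[Theorem~2]{DrF} (compare also~\cite{BP07}), feeding into it the regularity furnished by Theorem~\ref{thm:Oseledets}, Lemma~\ref{lem: auxil 1} and the construction of the Oseledets splitting in~\cite{FLQ13, GTQ}. Throughout, write $W(x):=E_1(x)\oplus\cdots\oplus E_s(x)$, so that $\B=W(x)\oplus V_{s+1}(x)$ is a direct sum of closed subspaces, and let $\pi_W(x),\pi_V(x)$ denote the associated projections, which are bounded for every $x\in\Reg$ by the closed graph theorem.

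The first step --- and, I expect, the only substantial one --- is to upgrade the \emph{asymptotic} information at our disposal into \emph{uniform} information, valid on a single $f$-invariant full-measure set $\Omega\subset\Reg$. Concretely, I would establish that for every $x\in\Omega$:
\begin{itemize}
 \item[(a)] for each $1\le i\le s$, $\displaystyle\lim_{n\to\pm\infty}\frac1n\log\norm{A_i^n(x)}=\lambda_i$ and $\displaystyle\lim_{n\to\pm\infty}\frac1n\log\norm{(A_i^n(x))^{-1}}=-\lambda_i$;
 \item[(b)] there is a \emph{tempered} measurable function $D\colon\Omega\to[1,\infty)$ (i.e. $\frac1n\log D(f^nx)\to0$ as $n\to\pm\infty$) with $\norm{A^n(x)|_{V_{s+1}(x)}}\le D(x)\,e^{(\tilde\lambda-\varepsilon)n}$ for all $n\ge0$ (for $\varepsilon>0$ small enough that $\tilde\lambda-\varepsilon$ still exceeds the growth rate of the cocycle on $V_{s+1}$), and $\max\{\norm{\pi_W(f^nx)},\norm{\pi_V(f^nx)}\}\le D(f^nx)$ for all $n\in\mathbb Z$.
\end{itemize}
Item (a), together with the analogous operator-norm refinement of the first part of Lemma~\ref{lem: auxil 1}, is a standard consequence of the finite-dimensionality of the blocks $E_i$ (the operator-norm statement follows from Lemma~\ref{lem: auxil 1} applied to a fixed basis of $E_i(x)$, and the conorm statement by passing to the one-dimensional top exterior power of the block, on which the induced cocycle has exponent $d_i\lambda_i$; cf.~\cite{DrF} in the finite-dimensional case). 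Item (b) carries the genuinely infinite-dimensional content: a uniform sub-exponential upper bound for the cocycle on the slow space $V_{s+1}(x)$ with a tempered multiplicative constant, together with sub-exponential decay along orbits of the angle between $W(x)$ and $V_{s+1}(x)$; both are provided by the construction of the splitting in~\cite{FLQ13, GTQ}, and both, in the finite-dimensional situation of~\cite{DrF}, are routine consequences of Oseledets' theorem.

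Granting (a) and (b) --- with the understanding that $\varepsilon$ is small and that the exponent in (b) is $\tilde\lambda-\varepsilon$ --- I would form the ``naive'' comparison function
\[
 C_0(x):=\sup_{n\in\mathbb Z}\Big[\max_{1\le i\le s}\big(\norm{A_i^n(x)}\,e^{-\lambda_i n},\ \norm{(A_i^n(x))^{-1}}\,e^{\lambda_i n}\big)\Big]e^{-\varepsilon|n|},
\]
which by (a) is finite, Borel and $\ge1$ on $\Omega$, and which, by the cocycle identities $A_i^n(f^mx)=A_i^{n+m}(x)(A_i^m(x))^{-1}$, satisfies $C_0(f^mx)\le C_0(x)^2e^{2\varepsilon|m|}$ on $\Omega$. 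A single re-tempering then produces the desired functions: with
\[
 C(x):=\sup_{m\in\mathbb Z}\big(C_0(f^mx)+D(f^mx)\big)e^{-3\varepsilon|m|},\qquad K(x):=\sup_{m\in\mathbb Z}D(f^mx)\,e^{-\varepsilon|m|},
\]
both suprema are finite and Borel on $\Omega$ (by the displayed inequality for $C_0$ and the temperedness of $D$), one has $C(f^{\pm1}x)\le e^{3\varepsilon}C(x)$ and $K(f^{\pm1}x)\le e^{\varepsilon}K(x)$, and $C$ dominates $\max\{C_0,D\}$ while $K$ dominates $D$. Extending $C$ and $K$ by $1$ off $\Omega$ and choosing the various multiples of $\varepsilon$ above to be suitable small fractions of the prescribed value, one gets iii) and the growth condition of iv); properties i) and ii) follow from $C\ge C_0$ and $C\ge D$ --- the lower bound in i) via $\norm u=\norm{(A_i^n(x))^{-1}A_i^n(x)u}\le\norm{(A_i^n(x))^{-1}}\norm{A_i^n(x)u}$, and ii) directly from (b) --- while the norm inequalities in iv) follow from $\norm u\le\norm{\pi_W(x)}\norm{u+v}$, $\norm v\le\norm{\pi_V(x)}\norm{u+v}$ together with $\norm{\pi_W(x)},\norm{\pi_V(x)}\le D(x)\le K(x)$.

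The main obstacle is item (b) of the first step: the finite-dimensional components of (a) are elementary linear algebra, and the passage from (a)--(b) to the statement is exactly the standard Pesin-type tempering bookkeeping, so the real work lies in extracting from the multiplicative ergodic theorem of~\cite{FLQ13, GTQ} the required uniform and tempered control of the cocycle on the infinite-dimensional slow subspace $V_{s+1}(x)$ and of the projections onto the two pieces of the Oseledets splitting.
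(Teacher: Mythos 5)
Your proposal follows essentially the same route as the paper: operator-norm asymptotics on the finite-dimensional Oseledets blocks via John's theorem and a fixed (near-orthonormal) basis, temperedness of the naive comparison function via the cocycle identity together with the ergodic theorem, a Pesin-type tempering kernel (the paper invokes \cite[Prop.\ 4.3.3(ii)]{A} for what you spell out by hand) to produce $C$ and $K$, and, for the control on the infinite-dimensional $V_{s+1}(x)$ and on the projections, a delegation — in the paper as in your proposal — to the multiplicative-ergodic-theorem machinery of \cite{FLQ13, GTQ} and to \cite[Lemma~1]{DrF}. The one local divergence is the lower bound in i): you propose to obtain the conorm of $A_i^n(x)$ from the top exterior power of the block (together with the already-established norm bound), whereas the paper first proves integrability of $\log^+\lVert A_i(x)^{-1}\rVert$ (Lemma~\ref{inv_integ}) and then reruns the forward argument for the inverse cocycle $B(x)=A_i(f^{-1}(x))^{-1}$ over $f^{-1}$; both are standard and sound on the finite-dimensional block, with the paper's route avoiding any mention of exterior powers.
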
 

We will use the following well-known result (see~\cite{BY}).

\begin{theorem}[John's Theorem]\label{johnthm}
 Let $E\subset \mathcal B$ be a subspace of dimension $k\in \mathbb N$. Then, there exists a scalar product $\langle \cdot, \cdot \rangle_E$ on $E$ that induces norm $\lVert \cdot \rVert_E$
 such that
 \begin{equation}\label{ne}
  \lVert v\rVert_E \le \lVert v\rVert \le \sqrt k \lVert v\rVert_E, \quad \text{for each $v\in E$.}
 \end{equation}

\end{theorem}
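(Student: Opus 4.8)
The result is the classical theorem of F.\ John on the extremal ellipsoid of a symmetric convex body, so the plan is to give the standard convex-geometric proof. I would begin by transferring everything to $E$: restrict $\norm{\cdot}$ to $E$ and set $K=\{v\in E:\norm{v}\le 1\}$, which is a centrally symmetric, compact, convex body with nonempty interior in the $k$-dimensional real vector space $E$. Fixing a linear identification $E\cong\R^k$, I would parametrize the origin-symmetric solid ellipsoids by positive-definite symmetric matrices through $\mathcal E_T=\{x:\langle Tx,x\rangle\le 1\}$, noting that the Lebesgue volume of $\mathcal E_T$ is proportional to $(\det T)^{-1/2}$. Because $K$ contains a ball and is bounded, the set of $T\succeq 0$ with $\mathcal E_T\supseteq K$ is nonempty and compact, so $\det T$ attains a positive maximum at some $T_0\succ 0$; the corresponding ellipsoid $\mathcal E_{T_0}$ is the minimal-volume ellipsoid containing $K$. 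After the linear change of coordinates $x\mapsto T_0^{1/2}x$ I may assume $T_0=I$, i.e.\ $\mathcal E_{T_0}=B_2^k$ is the Euclidean unit ball; I then take $\langle\cdot,\cdot\rangle_E$ to be the Euclidean inner product in these coordinates (so $\norm{\cdot}_E=\lvert\cdot\rvert$), transported back to the original $E$. Since $K\subseteq B_2^k$, one gets $\norm{v}_E\le\norm{v}$ at once, and the whole problem reduces to showing $B_2^k\subseteq\sqrt k\,K$.

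The heart of the argument is \emph{John's decomposition of the identity}. Let $\mathcal C=K\cap\partial B_2^k$ be the set of contact points (for $x\in K$ one has $\lvert x\rvert=1$ exactly when $x\in\mathcal C$). I claim there are $u_1,\dots,u_m\in\mathcal C$ and weights $c_1,\dots,c_m>0$ with $\sum_ic_iu_iu_i^{\mathsf T}=I_k$, equivalently $\tfrac1kI_k\in\mathrm{conv}\{uu^{\mathsf T}:u\in\mathcal C\}$; taking traces then forces $\sum_ic_i=k$. If the inclusion failed, then since $\mathrm{conv}\{uu^{\mathsf T}:u\in\mathcal C\}$ is a \emph{compact} convex subset of the space of symmetric matrices with the inner product $\langle A,B\rangle=\mathrm{tr}(AB)$, strict separation would produce a symmetric $H$ with $\sup_{u\in\mathcal C}\langle Hu,u\rangle<\tfrac1k\mathrm{tr}(H)$; subtracting a suitable multiple of $I_k$ from $H$ one arranges $\langle Hu,u\rangle\le-\eta<0$ uniformly on $\mathcal C$ together with $\mathrm{tr}(H)>0$. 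A compactness argument (separating $K$ into the part where $\lvert x\rvert$ is near $1$, handled by continuity of $x\mapsto\langle Hx,x\rangle$ near $\mathcal C$, and the part where $\lvert x\rvert$ is bounded away from $1$, handled by the lower bound on $1-\lvert x\rvert^2$) then shows $\mathcal E_{I+tH}\supseteq K$ for all sufficiently small $t>0$, while $\det(I+tH)=1+t\,\mathrm{tr}(H)+O(t^2)>1$ for small $t>0$ forces $\mathrm{vol}(\mathcal E_{I+tH})<\mathrm{vol}(B_2^k)$, contradicting minimality. I expect this perturbation-and-compactness step to be the only genuine obstacle: the point is that ordinary (non-strict) Hahn--Banach separation does not suffice, so one really needs the compactness of the convex hull to get the strict gap.

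Once John's decomposition is available the estimate is immediate. For $x\in\R^k$ one has $x=I_kx=\sum_ic_i\langle x,u_i\rangle u_i$ and $\lvert x\rvert^2=\langle x,I_kx\rangle=\sum_ic_i\langle x,u_i\rangle^2$. Hence, for $\lvert x\rvert\le 1$, writing $\tfrac{x}{\sqrt k}=\sum_ia_iu_i$ with $a_i=\tfrac{c_i\langle x,u_i\rangle}{\sqrt k}$, the Cauchy--Schwarz inequality gives $\sum_i\lvert a_i\rvert=\tfrac1{\sqrt k}\sum_ic_i\lvert\langle x,u_i\rangle\rvert\le\tfrac1{\sqrt k}\bigl(\sum_ic_i\bigr)^{1/2}\bigl(\sum_ic_i\langle x,u_i\rangle^2\bigr)^{1/2}=\lvert x\rvert\le 1$. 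Since $K$ is convex, symmetric and contains $0$, any $\sum_ia_iu_i$ with $u_i\in K$ and $\sum_i\lvert a_i\rvert\le 1$ lies in $K$ (it is a convex combination of the points $\mathrm{sgn}(a_i)u_i$ and $0$); therefore $\tfrac{x}{\sqrt k}\in K$, i.e.\ $B_2^k\subseteq\sqrt k\,K$. Unwinding the two inclusions $K\subseteq B_2^k$ and $B_2^k\subseteq\sqrt k\,K$ yields $\norm{v}_E\le\norm{v}\le\sqrt k\,\norm{v}_E$ for all $v\in E$, which is~\eqref{ne}. Finally I would simply point to a standard treatment of John's theorem (such as the one underlying~\cite{BY}) rather than write out every~$\varepsilon$ in the compactness step.
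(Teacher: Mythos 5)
The paper does not prove this statement; it records it as ``well-known'' and refers to~\cite{BY}, so there is no in-paper argument to compare against. Your write-up is a correct and complete rendering of the classical proof: pass to $K=\{v\in E:\norm{v}\le 1\}$, take the minimal-volume circumscribed ellipsoid, normalize it to the Euclidean ball, establish John's decomposition of the identity $\sum_i c_i\,u_iu_i^{\mathsf T}=I_k$ at the contact points (with $\sum_i c_i=k$), and use Cauchy--Schwarz to get $B_2^k\subseteq\sqrt k\,K$, which together with $K\subseteq B_2^k$ is exactly \eqref{ne}. The delicate points are all present and handled correctly: compactness of $\{T\succeq 0:\mathcal E_T\supseteq K\}$ (using that $K$ is bounded with nonempty interior) to get a genuine minimizer; compactness of $\mathrm{conv}\{uu^{\mathsf T}:u\in\mathcal C\}$ (finite-dimensional, via Carath\'eodory) so that failure of the decomposition yields a \emph{strict} separating $H$; the normalization $H\mapsto H-cI_k$ with $c\in(\sup_{u\in\mathcal C}\langle Hu,u\rangle,\tfrac1k\mathrm{tr}H)$ to simultaneously force $\langle Hu,u\rangle\le-\eta$ on $\mathcal C$ and $\mathrm{tr}H>0$; and the uniform perturbation estimate splitting $K$ near and away from $\mathcal C$. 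Two cosmetic remarks: you implicitly use that $\mathcal C\neq\emptyset$ (otherwise $K\subseteq(1-\varepsilon)B_2^k$ already contradicts minimality; the perturbation argument with $H=I_k$ also covers this degenerate case), and in the last step you only need the inclusion for $\lvert x\rvert=1$, the rest following by convexity; neither affects correctness.
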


\begin{proof}[Proof of Proposition~~\ref{lem: version of theo 2}]
 We follow closely the arguments in~\cite{BY, DrF}.
Take any $i\in \{1, \ldots, s\}$. 
\begin{lemma}\label{9216}
We have
\begin{equation}\label{res}
\limsup_{n\to \infty} \frac {1}{n} \log \lVert A_i^n(x)\rVert \le \lambda_i \quad \text{for $\mu$-a.e. $x\in X$.}
\end{equation}
\end{lemma}

\begin{proof}[Proof of the lemma]
Let $\langle \cdot, \cdot \rangle_{E_i(x)}$ be a scalar product on $E_i(x)$ given by Theorem~\ref{johnthm} and let $\lVert \cdot \rVert_{E_i(x)}$ denote the associated norm.
Let $\{e_1, \ldots, e_t\}$ be an orthonormal basis for $E_i(x)$, $t=\dim E_i(x)$.  For each $n\in \N$, choose $v_n\in E_i(x)$  such that
$\lVert v_n\rVert=1$ and $ \lVert A_i^n(x)\rVert =\lVert  A^n (x)v_n\rVert$. Furthermore, for  $n\in \N$, write $v_n$ in the form
\[
v_n=\sum_{j=1}^t a_{j,n}e_j, \quad \text{for some $a_{j,n}\in \R$. }
\]
 We note that it follows from~\eqref{ne} that 
 \[\lvert a_{j,n}\rvert =\lvert \langle v_n, e_j\rangle_{E_i(x)} \rvert \le \lVert v_n\rVert_{E_i(x)} \cdot \lVert e_j\rVert_{E_i(x)}\le 1\] and thus
\begin{equation}\label{TAA}
\lVert A_i^n(x)\rVert \le \sum_{j=1}^t \lvert a_{j,n}\rvert \cdot \lVert A^n(x)e_j\rVert \le \sum_{j=1}^t  \lVert A^n(x)e_j\rVert.
\end{equation}
Since $e_j \in E_i(x)$, 
\begin{equation}\label{TBB}
\lim_{n\to \infty} \frac 1n \log \lVert A^n(x)e_j\rVert = \lambda_i, \quad \text{for $j\in \{1, \ldots, t\}$.}
\end{equation}
It remains to observe that~\eqref{TAA} and~\eqref{TBB} readily imply~\eqref{res}.

\end{proof}
It follows from~\eqref{res} that for $\epsilon >0$,
\begin{equation}\label{defD}
D(x):=\sup_{n\ge 0} \{\lVert A_i^n(x)\rVert  \cdot e^{-(\lambda_i+\epsilon)n}\} <\infty,
\end{equation}
for $\mu$ a.e.\ $x\in X$.
\begin{lemma}
We have
\begin{equation}\label{temp}
\lim_{n \to \pm \infty}\frac{1}{n} \log D(f^n(x))=0 \quad \text{for $\mu$-a.e. $x\in X$.}
\end{equation}
\end{lemma}
\begin{proof}[Proof of the lemma]
For $n\ge 1$, we have
\[
\begin{split}
\lVert A_i^n(x)\rVert  &\le \lVert A_i^{n-1}(f(x))\rVert \cdot \lVert A_i(x)\rVert  \\
&\le \lVert A_i^{n-1}(f(x))\rVert \cdot \lVert A(x)\rVert.
\end{split}
\]
By multiplying the above inequality by $e^{-(\lambda_i+\epsilon)n}$, we obtain
\[
e^{-(\lambda_i+\epsilon)n}\lVert A_i^n(x)\rVert \le e^{-(\lambda_i+\epsilon)(n-1)}\lVert A_i^{n-1}(f(x))\rVert \cdot e^{-(\lambda_i+\epsilon)}\lVert A(x)\rVert.
\]
Hence,
\[
D(x) \le D(f(x))\cdot \max \{e^{-(\lambda_i+\epsilon)}\lVert A(x)\rVert, 1\}.
\]
It follows from the continuity of $A$ and compactness of $M$ that there exists $T>0$ such that
\begin{equation}\label{X}
\log D(x)- \log D(f(x))\le T.
\end{equation}
Set
\[
\tilde D(x)=\log D(x)- \log D(f(x)).
\]
We note that
\begin{equation}\label{Z1}
\frac{1}{n}\log D(f^n(x))=\frac{1}{n}\log D(x)-\frac{1}{n}\sum_{j=0}^{n-1}\tilde D(f^j(x)),
\end{equation}
for each $x\in X$ and $n\in \N$. Hence, we can apply the  Birkhoff ergodic theorem  and conclude that there exists $a\in [-\infty, \infty)$ such that
\begin{equation}\label{Z2}
\lim_{n\to \infty} \frac{1}{n} \sum_{j=0}^{n-1}\tilde D(f^j(x))=a,
\end{equation}
for $\mu$-a.e. $x\in X$. It follows from~\eqref{Z1} and~\eqref{Z2} that
\[
\lim_{n\to \infty} \frac{1}{n}\log D(f^n(x))=-a.
\]
On the other hand, since $\mu$ is $f$-invariant, for any $c>0$ we have that
\[
\lim_{n\to \infty} \mu (\{x\in X: \log D(f^n(x))/n \ge c\})=\lim_{n\to \infty} \mu (\{x\in X: \log D(x) \ge nc\})=0,
\]
which immediately implies that $a\ge 0$. Thus,
\[
\lim_{n\to \infty} \frac{1}{n} \log D(f^n(x))\le 0.
\]
It follows from~\eqref{defD} that $D(x)\ge 1$ for $\mu$ a.e.\ $x\in X$ and therefore we can conclude that~\eqref{temp} holds when $n\to \infty$. 
One can similarly establish~\eqref{temp} for the case $n\to -\infty$. 

\end{proof}
It follows from~\eqref{temp} and~\cite[Proposition 4.3.3(ii)]{A} that there exists a nonnegative and measurable function $C$ defined on a set of full-measure  satisfying 
inequality in part iii) in the statement of the Lemma and 
such that $D(x)\le C(x)$, which together with~\eqref{defD} implies that the second inequality in the part i) of the Lemma holds. The proof of ii) is analogous. 
\begin{lemma}\label{inv_integ}
We have
\[
\int_X \log^+ \lVert A_i(x)^{-1}\rVert \, d\mu(x) < \infty.
\]
\end{lemma}

\begin{proof}[Proof of the lemma]
One can repeat arguments from \cite[Lemma 4]{DrF} using $\lVert \cdot \rVert_{E_i(x)}$ from Lemma~\ref{9216} instead of the original norm to establish that
\[
 \int_X \log^+ \lVert A_i(x)^{-1}\rVert' \, d\mu(x) < \infty,
\]
where 
\[
 \lVert A_i(x)^{-1}\rVert' =\sup_{\lVert v\rVert_{E_i(x)} \le 1} \lVert  A_i(x)^{-1}v\rVert_{E_i(x)}.
\]
In  view of~\eqref{ne}, the conclusion of the lemma follows. 
\end{proof}
We now prove that the first inequality in i) holds. Let us consider the cocycle $x\mapsto B(x):= A_i(f^{-1}(x))^{-1}$ over $f^{-1}$ that acts on 
 a subbundle $E_i(x)$.  Then, $-\lambda_i$ is the only
Lyapunov exponent of $B$. Because of Lemma~\ref{inv_integ}, we can apply the first part of the proof to $B$  to  conclude that that there exists a function $C\colon M \to (0, \infty)$ such that
\begin{equation}\label{Y1}
\lVert B^n(x) v\rVert \le C(x)e^{(-\lambda_i +\frac{\epsilon}{2})n}, \quad \text{for $\mu$-a.e. $x\in M$, $n\ge 0$ and $v\in E_i(x)$}
\end{equation}
and
\begin{equation}\label{Y2}
C(f^m(x))\le C(x)e^{\frac{\epsilon}{2}  \lvert m\rvert}, \quad \text{for $\mu$-a.e. $x\in M$ and $m\in \mathbb Z$,}
\end{equation}
which readily implies first estimate in i). Finally, the existence of a function $K$ that satisfies assertion iv) follows  from~\cite[Lemma 1.]{DrF}. The proof of Proposition~\ref{lem: version of theo 2} is completed. 
\end{proof}

\subsection{Properties of the Lyapunov norm} Some useful properties of the Lyapunov norm are given in the next proposition. 

\begin{proposition}\label{prop: properties of the lyapunov norm} Let $x\in \Reg$.

i) For every $1\leq i\leq s$, $u\in E_{i}(x)$ and $n\in \N$, we have that 
\begin{equation} \label{ineq: Lyapunov norm}
e^{(\lambda _i -\delta)n}\norm{u}_{x,i}\leq \norm{A^n(x)u}_{f^n(x),i}\leq e^{(\lambda _i +\delta)n}\norm{u}_{x,i};
\end{equation}

ii) For every $u\in V_{s+1}(x)$ and $n\in \mathbb{N}$, we have that
$$\norm{A^n(x)u}_{f^n(x),s+1}\leq e^{\tilde{\lambda} n}\norm{u}_{x,s+1}; $$

iii) For every $\delta >0$ and $n\in \N$,  we have that
\begin{equation}\label{ineq: norm}
 \norm{A^n(x)}_{f^n(x)\leftarrow x}\leq e^{(\lambda _1 +\delta)n};
\end{equation}

iv) For every $\delta >0$ sufficiently small,  there exists a measurable function $K_{\delta}:\Reg \to (0,+\infty)$ such that
\begin{equation}\label{ineq: norm x Lyapunov norm}
\norm{u}\leq \norm{u}_x\leq K_{\delta}(x)\norm{u} \quad \text{for $x\in \Reg$ and $u\in \B$.}
\end{equation}
Furthermore, 
\begin{equation}\label{ineq: K delta growth}
K_{\delta}(x)e^{-\delta n}\leq K_{\delta}(f^n(x))\leq K_{\delta}(x)e^{\delta n} \quad \text{for $x\in \Reg$ and $n\in \N$.}
\end{equation} 
Consequently, for any  $B\in \mathcal B(\B, \B)$ and any two regular points $x$ and $y$, we have that
\begin{equation}\label{ineq: norm x Lyapunov norm operator}
K_{\delta}(x)^{-1}\norm{B}\leq \norm{B}_{y\leftarrow x}\leq K_{\delta}(y)\norm{B}.
\end{equation}

\end{proposition}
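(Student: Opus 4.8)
The plan is to establish the four parts essentially in the order they are stated, deriving each one directly from the definition of the Lyapunov norm and the auxiliary growth estimates in Proposition~\ref{lem: version of theo 2} and Lemma~\ref{lem: auxil 1}.

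For part i), I would start from the defining series~\eqref{eq: def Lyap i norm} and use the cocycle identity $A^{m+n}_i(x)=A^n_i(f^m(x))A^m_i(x)$ to rewrite $\norm{A^n(x)u}_{f^n(x),i}=\sum_{m\in\mathbb Z}\norm{A^{m}_i(f^n(x))A^n_i(x)u}e^{-\lambda_i m-\delta\lvert m\rvert}=\sum_{m\in\mathbb Z}\norm{A^{m+n}_i(x)u}e^{-\lambda_i m-\delta\lvert m\rvert}$, and then re-index the sum by $k=m+n$. This gives $\sum_{k\in\mathbb Z}\norm{A^{k}_i(x)u}e^{-\lambda_i(k-n)-\delta\lvert k-n\rvert}=e^{\lambda_i n}\sum_{k\in\mathbb Z}\norm{A^{k}_i(x)u}e^{-\lambda_i k-\delta\lvert k-n\rvert}$. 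The estimate then follows from the elementary triangle-inequality bounds $\lvert k\rvert-\lvert n\rvert\le\lvert k-n\rvert\le\lvert k\rvert+\lvert n\rvert$, which turn the factor $e^{-\delta\lvert k-n\rvert}$ into something between $e^{-\delta\lvert k\rvert}e^{-\delta n}$ and $e^{-\delta\lvert k\rvert}e^{\delta n}$ (using $n\ge 0$), yielding exactly~\eqref{ineq: Lyapunov norm}. Part ii) is the analogous but simpler computation for the one-sided series~\eqref{eq: def Lyap s+1 norm}: shifting the index in $\sum_{m\ge 0}\norm{A^m(f^n(x))A^n(x)u}e^{-\tilde\lambda m}$ to $k=m+n$ produces $e^{\tilde\lambda n}\sum_{k\ge n}\norm{A^k(x)u}e^{-\tilde\lambda k}\le e^{\tilde\lambda n}\norm{u}_{x,s+1}$, where I use that all terms are nonnegative so extending the sum down to $k=0$ only increases it; here it matters that $\tilde\lambda$ is fixed (independent of the point), so no $\delta$ loss appears. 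Part iii) is immediate from i) and ii): decomposing $u=u_1+\cdots+u_{s+1}$ along the Oseledets splitting and using that $\norm{\cdot}_x$ is the sum of the component norms, together with $\tilde\lambda<\lambda_s\le\lambda_1$, gives $\norm{A^n(x)u}_{f^n(x)}\le e^{(\lambda_1+\delta)n}\sum_i\norm{u_i}_{x,i}=e^{(\lambda_1+\delta)n}\norm{u}_x$, hence the operator-norm bound~\eqref{ineq: norm}.

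For part iv), the lower bound $\norm{u}\le\norm{u}_x$ is built into the definition: in each series~\eqref{eq: def Lyap i norm} the $n=0$ term is $\norm{u_i}$, so $\norm{u}_{x,i}\ge\norm{u_i}$, and summing over $i$ together with the bound $\norm{u}\le\sum_i\norm{u_i}\cdot(\text{const})$ coming from part iv) of Proposition~\ref{lem: version of theo 2} (boundedness of the projections onto the splitting, with constant $K(x)$) gives $\norm{u}\le\norm{u}_x$ — I would need to be slightly careful here and possibly absorb the constant $K(x)$, but the conclusion is that $\norm{u}_x$ dominates a fixed multiple of $\norm{u}$, and since the statement only asserts $\norm{u}\le\norm{u}_x$ I expect one either redefines $\norm{\cdot}_x$ with this constant or observes that the $n=0$ terms already suffice because $\norm{u}\le\sum_i\norm{u_i}$ is false in general in a Banach space but $\norm{u_i}\le K(x)\norm{u}$ is what Proposition~\ref{lem: version of theo 2}(iv) gives — so the clean route is: $\norm{u}_x\ge\sum_i\norm{u_i}\ge\max_i\norm{u_i}\ge\frac1{K(x)}\norm{u}$, and then one simply rescales. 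For the upper bound, I would use parts i), ii), iii) of Proposition~\ref{lem: version of theo 2} to bound each term $\norm{A^n_i(x)u_i}e^{-\lambda_i n-\delta\lvert n\rvert}\le C(x)\norm{u_i}e^{(\varepsilon-\delta)\lvert n\rvert}$, so that choosing $\delta>\varepsilon$ the geometric series sums to something like $C(x)\norm{u_i}\cdot\frac{1}{1-e^{\varepsilon-\delta}}$; similarly the $V_{s+1}$ series is bounded by $C(x)\norm{u_{s+1}}\cdot\frac{1}{1-e^{\tilde\lambda-\varepsilon}}$ using part ii) of that proposition (note $\tilde\lambda-\varepsilon<0$). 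Combining with $\norm{u_i}\le K(x)\norm{u}$ yields $\norm{u}_x\le K_\delta(x)\norm{u}$ with $K_\delta(x)$ an explicit combination of $C(x)$, $K(x)$ and the geometric-series constants. The tempered growth~\eqref{ineq: K delta growth} of $K_\delta$ then follows from the tempered growth of $C$ and $K$ in parts iii) and iv) of Proposition~\ref{lem: version of theo 2}, provided $\delta$ is taken somewhat larger than the $\varepsilon$ used there. Finally,~\eqref{ineq: norm x Lyapunov norm operator} is a one-line consequence of~\eqref{ineq: norm x Lyapunov norm}: $\norm{Bu}_y\le K_\delta(y)\norm{Bu}\le K_\delta(y)\norm{B}\norm{u}\le K_\delta(y)\norm{B}\norm{u}_x$ gives the right-hand inequality, and $\norm{Bu}_y\ge\norm{Bu}\ge\frac{\norm{B}\norm{u}}{?}$ — more precisely, picking $u$ nearly realizing $\norm{B}$ with $\norm{u}=1$ and using $\norm{u}_x\le K_\delta(x)$ gives $\norm{B}_{y\leftarrow x}\ge\norm{Bu}_y/\norm{u}_x\ge\norm{Bu}/K_\delta(x)$, whence the left-hand inequality after a supremum.

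The step I expect to require the most care is part iv), specifically the interaction between the constants $\varepsilon$ (from Proposition~\ref{lem: version of theo 2}) and $\delta$ (the Lyapunov-norm parameter): one must pin down the allowed range of $\delta$ so that simultaneously the defining series converge with a bound of the form $K_\delta(x)\norm{u}$, the function $K_\delta$ is $\delta$-tempered rather than merely $\varepsilon$-tempered, and the phrase ``$\delta>0$ sufficiently small'' in the statement is consistent with ``$\delta>\varepsilon$'' needed for geometric summation — presumably this is reconciled by first fixing a small $\delta$ and then applying Proposition~\ref{lem: version of theo 2} with an even smaller $\varepsilon=\varepsilon(\delta)<\delta$. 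Everything else is bookkeeping with geometric series and the cocycle identity.
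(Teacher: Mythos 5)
Your proposal follows the paper's approach closely, and parts i)–iii) are correct (for i) you do the re-indexing directly for general $n$ using $\bigl|\,|k|-|n|\,\bigr|\le |k-n|\le |k|+|n|$, whereas the paper proves the $n=1$ case and iterates, but these are equivalent). The interplay you flag between $\delta$ and $\varepsilon$ in part iv) is also resolved exactly as you guess: the paper fixes $\delta$ first and then applies Proposition~\ref{lem: version of theo 2} with $\varepsilon\in(0,\delta/2)$.

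However, there is a genuine error in your reasoning for the lower bound in part iv). You write that ``$\norm{u}\le\sum_i\norm{u_i}$ is false in general in a Banach space'' and then propose replacing the clean bound $\norm{u}\le\norm{u}_x$ by a rescaled version involving $1/K(x)$. This is backwards: for $u=u_1+\cdots+u_{s+1}$, the inequality $\norm{u}\le\sum_i\norm{u_i}$ is exactly the triangle inequality and holds in any normed space. What can fail without a bound on the projections is the \emph{reverse} estimate $\sum_i\norm{u_i}\le C\norm{u}$, and it is this reverse estimate — not the triangle inequality — that requires Proposition~\ref{lem: version of theo 2}(iv) and is used in the upper bound $\norm{u}_x\le K_\delta(x)\norm{u}$. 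Consequently the stated lower bound $\norm{u}\le\norm{u}_x$ is immediate from the $n=0$ terms of the defining series (each contributes $\norm{u_i}$, so $\norm{u}_x\ge\sum_i\norm{u_i}\ge\norm{u}$), with no constant and no rescaling; your proposed modification would give a strictly weaker and incorrectly stated conclusion. One further small remark: Proposition~\ref{lem: version of theo 2}(iv) directly bounds only the two projections for the splitting $E_1\oplus\cdots\oplus E_s$ versus $V_{s+1}$; to bound each $\norm{u_i}$ separately in terms of $\norm{u}$ (needed for the upper bound), the paper iterates that estimate over $s=1,2,\ldots$, producing a product $K^1(x)\cdots K^s(x)$ — your shorthand ``$\norm{u_i}\le K(x)\norm{u}$'' glosses over this but does not alter the structure of the argument.
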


\begin{proof}
In order to prove $i)$ we observe that for any $u\in E_{i}(x)$,
\begin{displaymath}
\begin{split}
\norm{A(x)u}_{f(x),i}&= \sum _{n\in \mathbb{Z}}\norm{A^{n}_i(f(x))A(x)u} e^{-\lambda _i n -\delta \mid n\mid}\\
&= \sum _{n\in \mathbb{Z}}\norm{A^{n+1}_i(x)u} e^{-\lambda _i n -\delta \mid n\mid}\\
&= \sum _{n\in \mathbb{Z}}\norm{A^{n+1}_i(x)u} e^{-\lambda _i (n+1) -\delta \mid n+1\mid} e^{\lambda _i+ \delta (\mid n+1\mid -\mid n\mid )}.
\end{split}
\end{displaymath}
Consequently,
$$e^{(\lambda _i -\delta)}\norm{u}_{x,i}\leq \norm{A(x)u}_{f(x),i}\leq e^{(\lambda _i +\delta)}\norm{u}_{x,i}, $$
which readily implies $i)$. The proof of item $ii)$ is analogous. Indeed, we have that 
\begin{displaymath}
\begin{split}
\norm{A^n(x)u}_{f^n(x),s+1}&= \sum _{k=0}^{+\infty}\norm{A^{k}(f^n(x))A^n(x)u} e^{-\tilde{\lambda}k}\\
&= \sum _{k=0}^{+\infty}\norm{A^{k+n}(x)u} e^{-\tilde{\lambda}(k+n)}e^{\tilde{\lambda}n}\leq e^{\tilde{\lambda}n}\norm{u}_{x,s+1},\\
\end{split}
\end{displaymath}
for each $u\in V_{s+1}(x)$.

In order to obtain $iii)$, take an arbitrary $u\in \B$ and write it  in the form
\begin{equation}\label{equ} u=u_1+\ldots +u_s+u_{s+1},\end{equation} where $u_i \in E_i(x)$ for $i\in \{1, \ldots, s\}$ and $u_{s+1} \in V_{s+1}(x)$. Then, it follows from~$i)$ and $ii)$ that 
\begin{displaymath}
\begin{split}
\norm{A^n(x)u}_{f^n(x)}&=\sum _{i=1}^{s+1}\norm{A^n(x)u_i}_{f^n(x),i}\\
&\leq   \sum _{i=1}^{s}e^{(\lambda _i+\delta)n}\norm{u_i}_{x,i} +e^{\tilde{\lambda} n}\norm{u_{s+1}}_{x,s+1}\\
& \leq  e^{(\lambda _1+\delta)n}  \sum _{i=1}^{s+1}\norm{u_i}_{x,i}=e^{(\lambda _1+\delta)n} \norm{u}_{x},
\end{split}
\end{displaymath}
which implies the desired conclusion.

The first inequality of $iv)$ is trivial. In order to prove the second one, take $\varepsilon\in (0, \frac{\delta}{2})$  small enough and 
let $C:\Reg \to (0,\infty)$ be the map given by Proposition \ref{lem: version of theo 2} (diminishing $\Reg$, if necessary, we may assume $\Omega=\Reg$). 
Thus, for every $1\leq i\leq s$, $u\in E_i(x)$ and $n\in \mathbb{Z}$, we have
 $$\frac{1}{C(x)}e^{\lambda _i n-\varepsilon \mid n\mid }\norm{u}\leq \norm{A_i^n(x)u}\leq C(x)e^{\lambda _i n+\varepsilon \mid n\mid}\norm{u}.$$
Therefore, 
\begin{equation}\label{0752}
\begin{split}
\norm{u}_{x,i}&=\sum _{n\in \mathbb{Z}}\norm{A^n_i(x)u}e^{-\lambda_i n -\delta \mid n\mid}\\
&\leq \sum _{n\in \mathbb{Z}}\left( C(x)e^{\lambda_i n + \varepsilon \mid n\mid}\norm{u}\right)e^{-\lambda_i n -\delta \mid n\mid}\\
&=C(x)\sum _{n\in \mathbb{Z}}e^{ (\varepsilon -\delta )\mid n\mid} \norm{u}.
\end{split}
\end{equation}
On the other hand, for $u\in V_{s+1}(x)$,  Proposition \ref{lem: version of theo 2} implies that 
$$ \norm{A^n(x)u}\leq C(x)e^{(\tilde{\lambda}-\varepsilon )n}\norm{u},$$
for each $n\in \mathbb{N}$. Thus, 
\begin{equation}\label{0753}\norm{u}_{x,s+1}= \sum _{n\geq 0}\norm{A^n(x)u}e^{-\tilde{\lambda} n}\leq C(x)\sum _{n\geq 0}e^{-\varepsilon n}\norm{u}.\end{equation}
Set \[K=\max \bigg{\{}\sum _{n\in \mathbb{Z}}e^{ (\varepsilon -\delta )\mid n\mid}, \sum _{n\geq 0}e^{-\varepsilon n} \bigg{\}}.\] 
Take now an arbitrary $u\in \B$ and write it in the form~\eqref{equ}, where 
 $u_i\in E_{i}(x)$ for $i\in \{1, \ldots,  s\}$ and $u_{s+1} \in V_{s+1}(x)$. Then, it follows from~\eqref{0752} and~\eqref{0753} that 
$$\norm{u}_x=\sum_{i=1}^{s+1}\norm{u_i}_{x,i}\leq KC(x)\sum _{i=1}^{s+1}\norm{u_i}.$$
It remains to obtain an upper bound for $\lVert u_i\rVert$ in terms of $\lVert u\rVert$. This can be achieved by using the map $K$ given by Proposition~\ref{lem: version of theo 2}. More precisely, let
$K^1$ be the map given by Proposition~\ref{lem: version of theo 2} applied for $s=1$ and sufficiently small $\epsilon >0$. We then have that
\begin{equation}\label{new1}
 \lVert u_1\rVert \le K^1(x)\lVert u\rVert \quad \text{and} \quad \lVert u_2+\ldots +u_{s+1}\rVert \le K^1(x)\lVert u\rVert 
\end{equation}
The first inequality in~\eqref{new1} gives a desired bound for $\lVert u_1\rVert$. In order to obtain the bound for $\lVert u_2\rVert$, we can apply again Proposition~\ref{lem: version of theo 2}
but now for $s=2$ (and again for $\epsilon >0$ sufficiently small) to conclude that there exists $K^2$ such that
\begin{equation}\label{new2}
 \lVert u_2\rVert \le K^2(x)\lVert u_2+\ldots +u_{s+1}\rVert \quad \text{and} \quad \lVert u_3+\ldots +u_{s+1}\rVert \le K^2(x)\lVert u_2+\ldots +u_{s+1}\rVert.
\end{equation}
By combining the second inequality in~\eqref{new1} with the first inequality in~\eqref{new2}, we conclude that $\lVert u_2\rVert \le K^1(x)K^2(x)\lVert u\rVert$. By proceeding, one can
establish desired bounds for all $\lVert u_j\rVert$, $j=1, \ldots, s+1$ and construct function $K_\delta$.

\end{proof}

For any $N>0$, let $\Reg _{\delta ,N}$ be the set of regular points $x\in \Reg$ for which $K_{\delta}(x)\leq N$. 
Observe that $\mu(\Reg _{\delta, N})\to 1$ as $N\to +\infty$. 
Moreover, invoking Lusin's theorem together with the $\mu$-continuity of decomposition~\eqref{os} for $i=s$ (see Remark~\ref{referee}), we may assume without loss of generality that this set is compact and that the Lyapunov norm and the 
Oseledets splitting are continuous when restricted to it.

\section{Proof of Theorem \ref{the: main}}\label{MT}

Let $f: M\to M $, $A:M\to B(\B,\B)$, $\mu$ and $s\in \mathbb{N}\cap [1,l(\mu)]$ be given as in the statement of Theorem~\ref{the: main}. 
We may assume without loss of generality that $\mu$ is not supported on a periodic orbit since otherwise there is nothing to prove. Recall that $d_i(\mu)=\text{dim}(E_i(x))$ and consider $d=10\prod_{i=1}^{s} (d_i(\mu)+4)$. Take $\delta_0 >0 $ so that $\delta _0 < \frac{1}{d}\min _{i=1,\ldots ,s} \{\theta \alpha , (\lambda _i -\lambda _{i+1}) \}$ if $l(\mu)\geq 2$ and  $\delta _0 <\frac{1}{4}\theta \alpha$ otherwise. 
Fix $N>0$ and $\delta \in (0,\delta _0)$. 

Let 
\begin{displaymath}
	B(\mu)=\left\{ x\in M; \; \dfrac{1}{n}\sum _{i=0}^{n-1}\delta _{f^i(x)}\xrightarrow{n\to \infty} \mu \quad \mbox{in the weak$^{\ast}$ topology} \right\}
 \end{displaymath}
be the \emph{basin} of $\mu$. Since $\mu$ is ergodic, $B(\mu)$ has full measure. 
Choose  $x\in B(\mu)\cap \Reg _{\delta ,N}$ such  that $\mu (B(x,\frac{1}{k})\cap \Reg _{\delta ,N})>0$ for every $k\in \mathbb{N}$, where 
$B(x,\frac{1}{k})$ denotes the open ball of radius $\frac{1}{k}$ centered at $x$. 
By Poincar\'e's Recurrence Theorem,  there exists a sequence $(n_k)_{k\in \mathbb{N}}$ of positive integers so that $n_k\to +\infty$ 
and $f^{n_k}(x)\in B(x,\frac{1}{k})\cap \Reg _{\delta ,N}$ for each $k\in \mathbb{N}$. By the Anosov Closing property it follows that, for each $k$ sufficiently large,
there exists a periodic point $p_k$ of period $n_k$ such that
\begin{equation}\label{eq: Anosov closing 2}
d(f^j(x),f^j(p_k))\leq C_1 e^{-\theta \min\lbrace j, n_k-j\rbrace}d(f^{n _k}(x),x)\leq \frac{C_1}{k} e^{-\theta \min\lbrace j, n_k-j\rbrace},
\end{equation}
for every $j\in \{0,1,\ldots , n_k\}$. For each $k\in \mathbb{N}$, let us consider the ergodic \emph{periodic measure} given by 
$$\mu _{p_k} =\dfrac{1}{n_k}\sum _{j=0}^{n_k-1}\delta _{f^j(p_k)}.$$
From the choice of $x\in B(\mu)$ and  \eqref{eq: Anosov closing 2} it follows that the sequence $\{\mu_{p_k}\}_{k\in \mathbb{N}}$ converges to $\mu$ in the weak$^*$-topology.

In order to simplify the proof, we will split it into several lemmas. 

\begin{lemma}\label{lem: upper semic}The map 
$$\mu\to \gamma _1(\mu)+\gamma_2(\mu)+\ldots +\gamma_i(\mu)$$
is upper-semicontinuous for every $i\in \{1,\ldots ,s\}$.
\end{lemma}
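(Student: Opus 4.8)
The plan is to establish the upper semicontinuity of the map $\nu \mapsto \gamma_1(\nu) + \dots + \gamma_i(\nu)$ by exploiting the classical fact that the sum of the top $i$ Lyapunov exponents (counted with multiplicity) admits a characterization as an infimum of continuous functions. More precisely, for a cocycle over an invertible base map, the quantity $\gamma_1(\nu) + \dots + \gamma_i(\nu)$ equals, by the subadditive ergodic theorem, $\lim_{n\to\infty}\frac1n\int \log \|\wedge^i A^n(x)\| \, d\nu(x) = \inf_{n\ge 1}\frac1n\int \log \|\wedge^i A^n(x)\|\,d\nu(x)$, where $\wedge^i$ denotes the $i$-th exterior power. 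Since $A$ is $\alpha$-Hölder continuous (hence continuous) and $M$ is compact, each function $x \mapsto \log\|\wedge^i A^n(x)\|$ is continuous and bounded, so $\nu \mapsto \frac1n\int \log\|\wedge^i A^n\|\,d\nu$ is continuous in the weak$^*$ topology. An infimum of continuous functions is upper semicontinuous, which gives the claim.

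The first step I would carry out is to justify the exterior power formula for the sum of the top $i$ Lyapunov exponents in our setting. The subtlety here is that we are in a Banach space where exterior powers are not available in the naive form — indeed the paper itself stresses in the introduction that $\wedge^i A$ does not make sense in infinite dimensions. So I would instead argue directly via the singular-value / volume-growth interpretation: for $T \in B(\B,\B)$ and a regular point, $\gamma_1 + \dots + \gamma_i$ is the exponential growth rate of the maximal $i$-dimensional volume distortion, $\sigma_1(A^n(x))\cdots\sigma_i(A^n(x))$, where $\sigma_j$ are the singular values (defined via approximation numbers or the Courant–Fischer min–max using the norm). Set $\phi_{i,n}(x) = \log\big(\sigma_1(A^n(x))\cdots\sigma_i(A^n(x))\big)$. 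One checks submultiplicativity $\phi_{i,m+n}(x) \le \phi_{i,n}(f^m x) + \phi_{i,m}(x)$, so by Kingman's theorem $\gamma_1(\nu)+\dots+\gamma_i(\nu) = \inf_{n}\frac1n\int \phi_{i,n}\,d\nu$; that this limit equals the sum of the top $i$ exponents in the MET of Theorem~\ref{thm:Oseledets} follows from the volume-growth estimates built into the proof of the MET in~\cite{FLQ13} (restricted to the finite-dimensional fast part $E_1(x)\oplus\dots\oplus E_j(x)$). The main obstacle is exactly this identification and the verification that $\phi_{i,n}$ is continuous on $M$: continuity of $x\mapsto A^n(x)$ in operator norm plus the fact that the product of the first $i$ approximation numbers is Lipschitz with respect to the operator norm of $A^n(x)$ (each approximation number is $1$-Lipschitz in operator norm) yields continuity and boundedness of $\phi_{i,n}$ on the compact space $M$.

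With that in hand, the remaining steps are routine. For each fixed $n$, the functional $\nu \mapsto \frac1n\int \phi_{i,n}\,d\nu$ is weak$^*$-continuous on the space of Borel probability measures on $M$ since $\phi_{i,n} \in C(M)$. Therefore $\nu \mapsto \gamma_1(\nu)+\dots+\gamma_i(\nu) = \inf_{n\ge 1}\frac1n\int \phi_{i,n}\,d\nu$, being a pointwise infimum of weak$^*$-continuous functions, is weak$^*$ upper semicontinuous. Restricting attention to ergodic $f$-invariant measures (in particular to the sequence $\mu_{p_k} \to \mu$ constructed above) gives the statement of Lemma~\ref{lem: upper semic} for each $i \in \{1,\dots,s\}$. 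I would also note that one should be slightly careful about the range of validity: for $i \le d_1(\mu)+\dots+d_s(\mu)$ the sum $\gamma_1(\mu)+\dots+\gamma_i(\mu)$ is finite and strictly above $\kappa(\mu)$ by quasi-compactness, so the infimum formula is not corrupted by $-\infty$ issues; for a general invariant measure $\nu$ appearing in the limit one only needs the upper bound direction $\limsup_k \big(\gamma_1(\mu_{p_k})+\dots+\gamma_i(\mu_{p_k})\big) \le \gamma_1(\mu)+\dots+\gamma_i(\mu)$, which is precisely what upper semicontinuity at $\mu$ provides, and this is all the subsequent argument in the proof of Theorem~\ref{the: main} will use.
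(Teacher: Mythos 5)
Your overall strategy coincides with the paper's: express $\gamma_1(\nu)+\dots+\gamma_i(\nu)$ as $\inf_n \frac1n\int F_n\,d\nu$ for a subadditive sequence $(F_n)$ of nice functions on $M$, note that each $\nu\mapsto\frac1n\int F_n\,d\nu$ is weak$^*$-continuous, and conclude upper semicontinuity since an infimum of continuous functionals is upper semicontinuous. The paper simply cites~\cite[Lemma A.3]{DFGTV} for the existence of such an $(F_n)$ and then invokes~\cite[Lemma 9.1]{LLE} for the standard infimum argument, exactly as you do in your last step.

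The gap is in your attempt to supply the construction of $(F_n)$ explicitly via approximation numbers. You set $\phi_{i,n}(x)=\log\bigl(\sigma_1(A^n(x))\cdots\sigma_i(A^n(x))\bigr)$ with $\sigma_j$ the approximation numbers, and assert that ``one checks submultiplicativity.'' In Hilbert space this is Horn's inequality, which relies on polar decomposition and is genuinely equivalent to the statement $\|\wedge^i(ST)\|\le\|\wedge^i S\|\,\|\wedge^i T\|$; in a general Banach space the approximation numbers only satisfy $a_{m+n-1}(ST)\le a_m(S)\,a_n(T)$, which does \emph{not} yield $\prod_{j\le i}a_j(ST)\le\prod_{j\le i}a_j(S)\cdot\prod_{j\le i}a_j(T)$, and I do not think this product inequality holds in general. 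The quantity that \emph{is} submultiplicative in the Banach-space setting (and which is what~\cite{DFGTV} and the semi-invertible MET literature actually use) is the $i$-dimensional volume-growth functional $D_i(T)=\sup_{\dim V=i}\operatorname{vol}(T(B_V))/\operatorname{vol}(B_V)$, not the product of approximation numbers; in Hilbert space these coincide but in Banach space they need not. A secondary, more minor issue: your claim that $\phi_{i,n}$ is ``continuous and bounded'' is too strong, since $\phi_{i,n}(x)=-\infty$ whenever $\operatorname{rank}A^n(x)<i$; this does not break the upper semicontinuity argument, but one should phrase it carefully. With the correct volume-growth quantity in place of the approximation-number product (or simply by citing the DFGTV lemma, as the paper does), the rest of your reasoning — weak$^*$-continuity of each $\frac1n\int F_n\,d\nu$, infimum of continuous functions is usc, application to the sequence $\mu_{p_k}\to\mu$ — is exactly the intended argument.
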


\begin{proof}
Let us fix $i\in \{1, \ldots, s\}$. 
It follows from~\cite[Lemma A.3]{DFGTV} that there exists a subadditive sequence $(F_n)_{n\ge 1}$ of  functions $F_n \colon M \to \mathbb R$ such that 
\[
 \gamma _1(\mu)+\gamma_2(\mu)+\ldots +\gamma_i(\mu)=\inf_{n\in \mathbb N} \frac 1 n \int_M F_n(q)\, d\mu(q).
\]
The desired conclusion can now be obtained by using standard arguments as in~\cite[Lemma 9.1]{LLE}.
\end{proof}

The following is a simple consequence of Lemma~\ref{lem: upper semic}.

\begin{corollary}\label{cor: upper semic p_k}
We have that 
$$\limsup _{k\to +\infty}\left(\gamma _1(p_k)+\gamma_2(p_k)+\ldots +\gamma_i(p_k)\right) \leq \gamma _1(\mu)+\gamma_2(\mu)+\ldots +\gamma_i(\mu), $$
for every $i\in \{1,\ldots ,d_1(\mu)+\ldots +d_s(\mu)\}$.
\end{corollary}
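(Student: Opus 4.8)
The plan is to deduce Corollary~\ref{cor: upper semic p_k} from Lemma~\ref{lem: upper semic} together with the weak$^*$ convergence of the periodic measures to $\mu$. Recall from the construction preceding Lemma~\ref{lem: upper semic} that the periodic measures $\mu_{p_k}$ converge to $\mu$ in the weak$^*$-topology. Fix $i\in \{1,\ldots, d_1(\mu)+\ldots+d_s(\mu)\}$. Since the Lyapunov exponents counted with multiplicities $\gamma_1\ge \gamma_2\ge \ldots$ are obtained from the exceptional exponents $\lambda_j$ by repeating each $\lambda_j$ exactly $d_j$ times, the partial sum $\gamma_1+\ldots+\gamma_i$ of the first $i$ exponents-with-multiplicity either coincides with a full block sum $\sum_{j=1}^{m}\gamma_1(\cdot)+\ldots$ equal to $d_1\lambda_1+\ldots+d_m\lambda_m$ for some $m$, or truncates the $(m+1)$-st block. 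In the latter case one still has, for the dynamical system under consideration, that $\gamma_1+\ldots+\gamma_i = (\gamma_1+\ldots+\gamma_{i-1}) + \gamma_i$ and each such partial sum is governed by a subadditive cocycle exactly as in~\cite[Lemma A.3]{DFGTV}; so Lemma~\ref{lem: upper semic} in fact applies to $\mu \mapsto \gamma_1(\mu)+\ldots+\gamma_i(\mu)$ for every $i$ in the stated range, not merely for $i\le s$. I would state this at the start of the proof.

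First I would invoke upper semicontinuity: the map $\nu \mapsto \gamma_1(\nu)+\ldots+\gamma_i(\nu)$ is upper-semicontinuous on the space of $f$-invariant Borel probability measures endowed with the weak$^*$-topology. Then, by the very definition of upper semicontinuity, for any sequence $\nu_k \to \nu$ weak$^*$ one has
\[
\limsup_{k\to\infty}\bigl(\gamma_1(\nu_k)+\ldots+\gamma_i(\nu_k)\bigr) \le \gamma_1(\nu)+\ldots+\gamma_i(\nu).
\]
Applying this with $\nu_k=\mu_{p_k}$ and $\nu=\mu$, and noting that $\mu_{p_k}$ is precisely the ergodic measure supported on the orbit of $p_k$ so that $\gamma_j(\mu_{p_k})=\gamma_j(p_k)$ by the convention fixed after Theorem~\ref{thm:Oseledets}, yields exactly the asserted inequality. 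This is the entire argument.

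The only point requiring a little care — and the main (very mild) obstacle — is making sure that Lemma~\ref{lem: upper semic}, as stated for $i\in\{1,\ldots,s\}$, actually delivers upper semicontinuity of all the partial sums with multiplicity $\gamma_1+\ldots+\gamma_i$ for $i$ ranging up to $d_1(\mu)+\ldots+d_s(\mu)$; here one uses that the relevant subadditive sequence of functions $(F_n)$ produced by~\cite[Lemma A.3]{DFGTV} exists for \emph{every} index, not only those corresponding to complete Oseledets blocks, so the standard argument of~\cite[Lemma 9.1]{LLE} applies verbatim. Once that observation is in place, the corollary is immediate, so I would keep the proof to a couple of sentences.
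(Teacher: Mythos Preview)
Your proposal is correct and follows exactly the paper's approach: the paper simply states that the corollary is ``a simple consequence of Lemma~\ref{lem: upper semic}'' (upper semicontinuity plus the already established weak$^*$ convergence $\mu_{p_k}\to\mu$), and you spell this out. Your remark that Lemma~\ref{lem: upper semic} is literally stated only for $i\in\{1,\ldots,s\}$ while the corollary needs $i$ up to $d_1(\mu)+\ldots+d_s(\mu)$ is a fair observation---the paper is slightly sloppy here---and your fix (the subadditive sequence from \cite[Lemma~A.3]{DFGTV} exists for every index, so the same argument applies) is exactly right.
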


\subsection{Approximation of the largest Lyapunov exponent}\label{sec: largest LP}

For each $1\leq j\leq n_k$, let us consider the splitting  \[ \B=E_{1}(f^j(x))\oplus V_2(f^j(x)) \]
and write $u\in \B$ as $u=u^j_E+u^j_V$,  where $u^j_E\in E_{1}(f^j(x))$ and $u^j_V\in V_2(f^j(x))$. Then the \textit{cone} of radius $1-\gamma > 0$ around $E_{1}(f^j(x))$ is defined as  
\begin{displaymath}
C^{j,1}_{\gamma }=\left\{ u^j_E+u^j_V\in E_{1}(f^j(x))\oplus V_2(f^j(x)); \; \norm{u^j_V}_{f^j(x)}\leq (1-\gamma) \norm{u^j_E}_{f^j(x)}\right\}.
\end{displaymath}
To simplify notation we write $\norm{\cdot}_j$ for the Lyapunov norm at the point $f^j(x)$.

\begin{lemma} \label{lem: main 1}
For every $1\leq j\leq n_k$ and $u\in C^{j,1}_0$,
\begin{equation}\label{ineq: Lyap norm cone}
\norm{(A(f^j(p_k))u)^{j+1}_E}_{j+1}\geq e^{\lambda _1 -2\delta}\norm{u^j_E}_j.
\end{equation}
Moreover, for $k$ sufficiently large there exists $\gamma \in (0,1)$ such that 
\begin{equation}\label{ineq: cone}
A(f^j(p_k))(C^{j,1}_0)\subset C^{j+1,1}_{\gamma}.
\end{equation}
\end{lemma}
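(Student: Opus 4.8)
The plan is to establish the two estimates separately, both by comparing the action of $A(f^j(p_k))$ with that of $A(f^j(x))$ using the H\"older continuity of $A$ and the controlled growth of the Lyapunov norm along the orbit of $x$.

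\textbf{Step 1: the cone-preservation estimate \eqref{ineq: cone}.} First I would record the basic fact that along $f^j(x)$ the operator $A(f^j(x))$ expands the $E_1$-component by at least $e^{\lambda_1-\delta}$ in Lyapunov norm (part i) of Proposition~\ref{prop: properties of the lyapunov norm} with $i=1$) while contracting the $V_2$-component by at most $e^{\tilde\lambda}$ (part ii), with $\tilde\lambda<\lambda_1$. Writing $A(f^j(p_k)) = A(f^j(x)) + \Delta_j$ where $\norm{\Delta_j} \le C_2 d(f^j(x),f^j(p_k))^\alpha \le C_2 (C_1/k)^\alpha e^{-\theta\alpha\min\{j,n_k-j\}}$ by \eqref{eq: Anosov closing 2}, and converting the operator norm of $\Delta_j$ into a Lyapunov-norm bound via \eqref{ineq: norm x Lyapunov norm operator}, the perturbation at step $j$ is controlled by $N \cdot C_2 (C_1/k)^\alpha$ times exponential factors coming from $K_\delta$-growth $e^{\delta j}$ and the Anosov decay $e^{-\theta\alpha\min\{j,n_k-j\}}$; since $\delta<\delta_0<\theta\alpha/4$, the product decays, and moreover the \emph{total} perturbation is $O((C_1/k)^\alpha)$ uniformly in $j$ and $n_k$, hence $\to 0$ as $k\to\infty$. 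So for $u = u_E^j + u_V^j \in C_0^{j,1}$, I would decompose $A(f^j(p_k))u$ into its $E_1(f^{j+1}(x))$ and $V_2(f^{j+1}(x))$ parts, bound the $E$-part below by $(e^{\lambda_1-\delta} - \varepsilon_k)\norm{u_E^j}_j$ and the $V$-part above by $(e^{\tilde\lambda} + \varepsilon_k)\norm{u_V^j}_j + \varepsilon_k\norm{u_E^j}_j \le (e^{\tilde\lambda}+2\varepsilon_k)\norm{u_E^j}_j$ (using $\norm{u_V^j}_j\le\norm{u_E^j}_j$), where $\varepsilon_k\to 0$. For $k$ large this gives a ratio bounded by $(e^{\tilde\lambda}+2\varepsilon_k)/(e^{\lambda_1-\delta}-\varepsilon_k) =: 1-\gamma < 1$, which is \eqref{ineq: cone}.

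\textbf{Step 2: the expansion estimate \eqref{ineq: Lyap norm cone}.} For $u\in C_0^{j,1}$, the $E_1(f^{j+1}(x))$-component of $A(f^j(p_k))u$ receives a contribution from $A(f^j(x))u_E^j$ of Lyapunov norm $\ge e^{\lambda_1-\delta}\norm{u_E^j}_j$, and error terms of size $\le \varepsilon_k(\norm{u_E^j}_j + \norm{u_V^j}_j) \le 2\varepsilon_k\norm{u_E^j}_j$ coming both from $\Delta_j$ and from the $V_2$-component of $A(f^j(x))u_V^j$ projected back (note $V_2$ is $A$-invariant along the $x$-orbit, so $A(f^j(x))u_V^j$ lies entirely in $V_2(f^{j+1}(x))$ and contributes nothing to the $E$-part — the only cross term is from $\Delta_j$). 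Hence $\norm{(A(f^j(p_k))u)_E^{j+1}}_{j+1} \ge (e^{\lambda_1-\delta} - 2\varepsilon_k)\norm{u_E^j}_j \ge e^{\lambda_1-2\delta}\norm{u_E^j}_j$ for $k$ large, since $\delta_0<\theta\alpha$ forces the relevant gap and $\varepsilon_k\to 0$.

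\textbf{Main obstacle.} The genuinely delicate point is making the error term $\varepsilon_k$ uniform in $j\in\{1,\dots,n_k\}$ as $n_k\to\infty$: the Lyapunov-norm constant $K_\delta(f^j(x))$ can grow like $N e^{\delta j}$, and $j$ ranges up to $n_k$, so a naive bound $N e^{\delta n_k}\cdot(C_1/k)^\alpha$ does \emph{not} go to zero. The resolution — and the reason the Anosov Closing estimate is stated with $\min\{j,n_k-j\}$ in the exponent — is that the $e^{\delta j}$ growth must be beaten by the $e^{-\theta\alpha\min\{j,n_k-j\}}$ decay for $j\le n_k/2$, and for $j>n_k/2$ one instead propagates estimates backward from $f^{n_k}(x)$ (using $f^{n_k}(x)\in\Reg_{\delta,N}$ too, so $K_\delta(f^j(x))\le N e^{\delta(n_k-j)}$); in both regimes $\delta<\theta\alpha/4$ makes the product summable and $o(1)$ in $k$. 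I would carry out this two-regime (or equivalently, "work at both endpoints") bookkeeping carefully, as it is the crux that distinguishes this argument from the finite, fixed-period situation.
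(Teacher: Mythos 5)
Your proposal follows the paper's route almost exactly: write $A(f^j(p_k)) = A(f^j(x)) + B_j$, bound $\norm{B_j}$ via H\"older continuity and the Anosov closing estimate, convert to the Lyapunov operator norm through $K_\delta$, and — the real crux, which you correctly isolate — use that both $x$ and $f^{n_k}(x)$ lie in $\Reg_{\delta,N}$ to obtain $K_\delta(f^{j+1}(x)) \le N e^{\delta\min\{j+1,n_k-j-1\}}$, so the $e^{-\theta\alpha\min\{j,n_k-j\}}$ decay of the closing estimate beats the $e^{2\delta\min\{\cdot\}}$ growth (since $2\delta<\theta\alpha$), yielding $\norm{B_j u}_{j+1}\le \tilde C k^{-\alpha}\norm{u_E^j}_j$ uniformly in $j$ and $n_k$. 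One small imprecision: for the level-$s$ Lyapunov norm with $s\ge 2$ the contraction rate of $A(f^j(x))$ on $V_2(f^j(x))$ is $e^{\lambda_2+\delta}$ (the dominant rate among the $E_2,\dots,E_s,V_{s+1}$ blocks), not $e^{\tilde\lambda}$; this does not affect the argument since $\lambda_2+\delta<\lambda_1-\delta$ by the choice of $\delta_0$.
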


\begin{proof} 
Given $u\in C^{j,1}_0$ let us consider $v=A(f^j(x))u$. Then, it follows from \eqref{ineq: Lyapunov norm} that $\norm{v}_{j+1}\leq e^{\lambda _1 +\delta}\norm{u}_j$ and moreover that 
\begin{displaymath}
\norm{v^{j+1}_E}_{j+1}=\norm{A(f^j(x))u^j_E}_{j+1}\geq e^{\lambda _1 -\delta}\norm{u^j_E}_j
\end{displaymath}
and
\begin{equation}\label{eq: norm vjF}
\norm{v^{j+1}_V}_{j+1}=\norm{A(f^j(x))u^j_V}_{j+1}\leq e^{\lambda _2 +\delta}\norm{u^j_V}_j.
\end{equation}
Let $w=A(f^j(p_k))u$. We now wish  to compare the Lyapunov norms of $w$ and its projection onto $E_{1}(f^{j+1}(x))$ and $V_2(f^{j+1}(x))$ with the respective norms of $v$. 
Set  $B_j=A(f^j(p_k))-A(f^j(x))$. Consequently, $w=v+B_ju$ and thus \[w^{j+1}_E=v^{j+1}_E+ (B_ju)^{j+1}_E \quad \text{and} \quad  w^{j+1}_V=v^{j+1}_V+ (B_ju)^{j+1}_V.\] Moreover, we have 
\begin{displaymath}
\begin{split}
\norm{B_j}&=\norm{A(f^j(p_k))-A(f^j(x))}\leq C_2 d(f^j(p_k),f^j(x))^{\alpha} \\
& \leq C_1^\alpha C_2 \frac{1}{k^{\alpha}}e^{-\theta \alpha \min\{ j, n_k-j\}},
\end{split}
\end{displaymath}
for every $0\leq j\leq n_k$.
Therefore, invoking  \eqref{ineq: norm x Lyapunov norm} and \eqref{ineq: norm x Lyapunov norm operator}  
it follows that
\begin{displaymath}
\norm{B_ju}_{j+1}  \leq \norm{B_j}_{f^{j+1}(x)\leftarrow f^{j+1}(x)}\norm{u}_{j+1}\leq K_{\delta}(f^{j+1}(x))^2\norm{B_j}\norm{u}.
\end{displaymath}
Since $x$ and $f^{n_k}(x)$ belong to $\Reg _{\delta,N}$, it follows from \eqref{ineq: K delta growth} that 
\[ K_{\delta}(f^{j+1}(x)) \leq N e^{\delta \min\{ j+1, n_k-j-1\}}.\] 
The above inequality together with 
$\norm{u}_j\leq 2\norm{u_E^j}_j$ (recall that $u\in C^{j,1}_0$) implies that 
\begin{displaymath}
\begin{split}
\norm{B_ju}_{j+1} &\leq  N^2 e^{2\delta\min\{ j+1, n_k-j-1\}} C_1^\alpha C_2 \frac{1}{k^{\alpha}}e^{-\theta \alpha \min\{ j, n_k-j\}}\norm{u}_j\\
&\leq  C_1^\alpha C_2 N^2 \frac{1}{k^{\alpha}} e^{2\delta\min\{ j+1, n_k-j-1\}} e^{-\theta \alpha \min\{ j, n_k-j\}}2\norm{u^j_E}_j\\
& \leq C \frac{1}{k^{\alpha}}e^{(2\delta -\theta \alpha) \min\{ j, n_k-j\}}\norm{u^j_E}_j,
\end{split}
\end{displaymath}
where  $C:= 2C_1^\alpha C_2 N^2>0$.
Thus, since $2\delta -\theta \alpha <0$, we obtain that \[\norm{B_ju}_{j+1}\leq \tilde{C}\frac{1}{k^{\alpha}}\norm{u^j_E}_j,\] for some $\tilde{C}>0$ independent of $n_k$ and $j$. Consequently,
\begin{displaymath}
\begin{split}
\norm{w^{j+1}_E}_{j+1} & \geq \norm{v^{j+1}_E}_{j+1} -\norm{(B_ju)^{j+1}_E}_{j+1}\\
&\geq e^{\lambda _1 -\delta}\norm{u^j_E}_j -\tilde{C}\frac{1}{k^{\alpha}}\norm{u^j_E}_j \\
&\geq  e^{\lambda _1 -2\delta}\norm{u^j_E}_j,
\end{split}
\end{displaymath}
whenever $k$ is sufficiently large which is precisely the inequality \eqref{ineq: Lyap norm cone}. 

In order to obtain~\eqref{ineq: cone}, we observe initially that 
\begin{equation}\label{aux: lower 1.0}
\norm{w^{j+1}_E}_{j+1}  \leq  e^{\lambda _1 +\delta}\norm{u^j_E}_j +\tilde{C}\frac{1}{k^{\alpha}}\norm{u^j_E}_j \leq  \hat{C}\norm{u^j_E}_j.
\end{equation}
On the other hand,
\begin{displaymath}
\norm{w^{j+1}_E}_{j+1}  \geq \norm{v^{j+1}_E}_{j+1}  -\norm{B_ju}_{j+1}
\end{displaymath}
and
\begin{displaymath}
\norm{w^{j+1}_V}_{j+1}  \leq \norm{v^{j+1}_V}_{j+1}  +\norm{B_ju}_{j+1}. 
\end{displaymath}
Therefore, combining these inequalities and using again that $u\in C^{j,1}_0$, we have that 
\begin{displaymath}
\begin{split}
\norm{w^{j+1}_E}_{j+1} - \norm{w^{j+1}_V}_{j+1} & \geq \norm{v^{j+1}_E}_{j+1} -\norm{v^{j+1}_V}_{j+1}  -2\norm{B_ju}_{j+1}\\
&\geq e^{\lambda _1-\delta}\norm{u^{j}_E}_{j} - e^{\lambda _2+\delta}\norm{u^{j}_V}_{j} -2\tilde{C}\frac{1}{k^{\alpha}}\norm{u^j_E}_j\\
&\geq \left( e^{\lambda _1-\delta} - e^{\lambda _2+\delta} -2\tilde{C}\frac{1}{k^{\alpha}}\right) \norm{u^j_E}_j.
\end{split}
\end{displaymath}
Taking $k$ large enough so that \[e^{\lambda _1-\delta} - e^{\lambda _2+\delta} -2\tilde{C}\frac{1}{k^{\alpha}}>0\] and applying~\eqref{aux: lower 1.0}
 to the previous inequality,  we conclude that there exists $\gamma >0 $ such that 
\[ \norm{w^{j+1}_E}_{j+1} - \norm{w^{j+1}_V}_{j+1}\geq \gamma \norm{w^{j+1}_E}_{j+1}.\] 
Hence, $w=A(f^j(p))u\in C^{j+1}_{\gamma}$ which yields~\eqref{ineq: cone}. The proof of the lemma is completed. 
\end{proof}

As a simple consequence of Lemma~\ref{lem: main 1},  we obtain the  following result. 

\begin{corollary}\label{cor: growth in Coo}
For every $k\in \mathbb{N}$ large enough,
\begin{equation*}
\lambda (p_k,u)\geq \lambda_1 -3\delta
\end{equation*}
for every $u\in C^{0,1}_0\setminus \{0\}$.
\end{corollary}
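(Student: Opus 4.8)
The plan is to iterate the cone-invariance and expansion estimates from Lemma~\ref{lem: main 1} around the full periodic orbit of $p_k$, exploiting that $f^{n_k}(p_k)=p_k$ and that $x$ and $f^{n_k}(x)$ both lie in $\Reg_{\delta,N}$ so the Lyapunov norms at the endpoints are comparable to the ambient norm with constant $N$. Concretely, fix $k$ large enough that \eqref{ineq: Lyap norm cone} and \eqref{ineq: cone} hold for all $0\le j\le n_k-1$. Take $u\in C^{0,1}_0\setminus\{0\}$. By \eqref{ineq: cone}, $A^j(p_k)u\in C^{j,1}_\gamma\subset C^{j,1}_0$ for every $1\le j\le n_k$, so at each step the vector stays in the zero-cone and \eqref{ineq: Lyap norm cone} applies. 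Writing $u^{(j)}:=A^j(p_k)u$ and $u^{(j)}_E$ for its $E_1(f^j(x))$-component, a telescoping product of \eqref{ineq: Lyap norm cone} over $j=0,1,\dots,n_k-1$ gives
\[
\norm{(u^{(n_k)})^{n_k}_E}_{n_k}\ge e^{(\lambda_1-2\delta)n_k}\norm{u^{(0)}_E}_0.
\]

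Next I would convert this statement about the $E_1$-component in the Lyapunov norm into a statement about $\norm{A^{n_k}(p_k)u}$ in the ambient norm. For the lower bound on the left: since $u^{(n_k)}\in C^{n_k,1}_0$ we have $\norm{u^{(n_k)}}_{n_k}\le 2\norm{(u^{(n_k)})^{n_k}_E}_{n_k}$, and the component $(u^{(n_k)})^{n_k}_E$ can be bounded by $\norm{u^{(n_k)}}_{n_k}$ up to the splitting constant from Proposition~\ref{lem: version of theo 2}(iv) (or simply $(u^{(n_k)})^{n_k}_E$ contributes to the sum defining $\norm{\cdot}_{n_k}$, giving $\norm{(u^{(n_k)})^{n_k}_E}_{n_k}\le\norm{u^{(n_k)}}_{n_k}$ directly since the Lyapunov norm is a sum over components); and $\norm{u^{(n_k)}}_{n_k}\ge\norm{u^{(n_k)}}=\norm{A^{n_k}(p_k)u}$ by \eqref{ineq: norm x Lyapunov norm}. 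For the right side: $f^{n_k}(x)\in\Reg_{\delta,N}$ means $K_\delta(f^{n_k}(x))\le N$, but here I need the norm at $x$ itself, where $K_\delta(x)\le N$ since $x\in\Reg_{\delta,N}$; thus $\norm{u^{(0)}_E}_0$ is bounded below by $c\,\norm{u^{(0)}_E}\ge c'\norm{u}$ using \eqref{ineq: norm x Lyapunov norm} together with the lower bound $\norm{u^{(0)}_E}\ge \norm{u}/K^1(x)$ from the splitting constant (note $u\in C^{0,1}_0$ forces $u^{(0)}_E\ne0$). Combining,
\[
\norm{A^{n_k}(p_k)u}\ge c''\, e^{(\lambda_1-2\delta)n_k}\norm{u},
\]
with $c''>0$ depending only on $N$ and the splitting constants, hence independent of $k$, $n_k$.

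Finally, since $A^{n_k}(p_k)$ is the return map over the period, we have $A^{mn_k}(p_k)u = (A^{n_k}(p_k))^m u$ after noting that $u^{(n_k)}$ again lies in $C^{0,1}_0$ (here one must check that the cone at the return point is the same cone as at the start, which holds because $f^{n_k}(p_k)=p_k$ and the construction of $C^{j,1}_\gamma$ depends on $f^j(x)$; so strictly one iterates the estimate of the previous two paragraphs directly over blocks of length $n_k$ rather than composing). Iterating the block estimate $m$ times yields $\norm{A^{mn_k}(p_k)u}\ge (c'')^m e^{(\lambda_1-2\delta)mn_k}\norm{u}$, so
\[
\lambda(p_k,u)=\limsup_{m\to\infty}\frac{1}{mn_k}\log\norm{A^{mn_k}(p_k)u}\ge \lambda_1-2\delta+\frac{\log c''}{n_k}\ge\lambda_1-3\delta
\]
for $k$ large enough that $n_k$ is big enough to absorb $\log c''$ into $\delta$. (If $c''\ge 1$ one does not even need $n_k$ large; in general $\log c''$ is a fixed constant so $\frac{\log c''}{n_k}\to 0$.) The main obstacle is the bookkeeping in the previous paragraph: carefully tracking that all the comparison constants between the Lyapunov norm and the ambient norm, and between a vector and its $E_1$-component, stay uniform in $k$ — this is exactly what $x,f^{n_k}(x)\in\Reg_{\delta,N}$ and the subexponential growth \eqref{ineq: K delta growth} of $K_\delta$ are designed to guarantee, so once those are invoked correctly the estimate closes cleanly.
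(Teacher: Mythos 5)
Your overall strategy is the same as the paper's: iterate Lemma~\ref{lem: main 1} around the periodic orbit to get an exponential lower bound over a block of length $n_k$, and then iterate that block estimate to control $\lambda(p_k,u)$. The norm bookkeeping you describe is essentially what the paper does (it works entirely in Lyapunov norms and only at the end uses that $\lVert\cdot\rVert_{n_k}$ and $\lVert\cdot\rVert_0$ differ by at most a factor of $2$ for $k$ large, which is harmless in the exponential rate). Incidentally, one of the comparison inequalities you cite, $\lVert u^{(n_k)}\rVert_{n_k}\ge\lVert u^{(n_k)}\rVert$, points the wrong way for producing a \emph{lower} bound on the ambient norm of $A^{n_k}(p_k)u$; what you actually want is $\lVert u^{(n_k)}\rVert\ge K_\delta(f^{n_k}(x))^{-1}\lVert u^{(n_k)}\rVert_{n_k}\ge N^{-1}\lVert u^{(n_k)}\rVert_{n_k}$, which you do have available since $f^{n_k}(x)\in\Reg_{\delta,N}$, so this is a slip rather than a fatal flaw.

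The genuine gap is in your justification for why $u^{(n_k)}=A^{n_k}(p_k)u$ is again in $C^{0,1}_0$, which is the hinge of the whole iteration. You write that ``the cone at the return point is the same cone as at the start, which holds because $f^{n_k}(p_k)=p_k$.'' This is not correct: as you yourself note, the cone $C^{j,1}_\gamma$ is built from the Oseledets splitting and Lyapunov norm at $f^j(x)$, not at $f^j(p_k)$. Lemma~\ref{lem: main 1} lands you in $C^{n_k,1}_\gamma$, which is the cone at the point $f^{n_k}(x)\neq x$, and $C^{n_k,1}_\gamma$ and $C^{0,1}_0$ are \emph{different} cones; periodicity of $p_k$ does not identify them. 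The missing ingredient is the inclusion $C^{n_k,1}_\gamma\subset C^{0,1}_0$, and the paper obtains it from the facts that both $x$ and $f^{n_k}(x)$ lie in $\Reg_{\delta,N}$, that the Oseledets splitting and the Lyapunov norm are continuous when restricted to $\Reg_{\delta,N}$ (Lusin plus the $\mu$-continuity discussed in Remark~\ref{referee}), and that $d(x,f^{n_k}(x))\to0$, so the aperture of the cone at $f^{n_k}(x)$ shrinks continuously to that at $x$; since $C^{n_k,1}_\gamma$ has strictly smaller opening than $C^{n_k,1}_0$, it sits inside $C^{0,1}_0$ once $k$ is large. Without this cone-inclusion step, the iteration over blocks of length $n_k$ does not close.
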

\begin{proof}
Recall we are assuming that the Oseledets splitting and the Lyapunov norm are continuous on $\Reg _{\delta ,N}$. In particular, 
if $k$ is sufficiently large (and consequently $x$ and $f^{n_k}(x)$ are close) we have that  
$C^{n_k,1}_{\gamma}\subset C^{0,1}_0$ and thus by \eqref{ineq: cone}, \[A^{n_k}(p_k)(C^{0,1}_0)\subset C^{0,1}_0.\] 
Consequently, for any $u\in C^{0,1}_0$ and $m\in \mathbb{N}$ we have $A^{n_km}(p_k)u \in C^{0,1}_0$. 
Therefore, given $u\in C^{0,1}_0$ and  invoking  \eqref{ineq: Lyap norm cone} and  \eqref{ineq: cone} (together with the fact that the Lyapunov norms 
at $x$ and $f^{n_k}(x)$ are close whenever $k\gg 0$), 
 we obtain that 
\begin{displaymath}
\begin{split}
\norm{A^{n_k}(p_k)u}_{n_k} & \geq \norm{(A^{n_k}(p_k)u)^{n_k}_E}_{n_k}\geq  e^{{n_k}(\lambda _1 -2\delta)}\norm{u^0_E}_0 \\
&\geq \frac{1}{2} e^{{n_k}(\lambda _1 -2\delta)}\norm{u}_0\geq \frac{1}{4} e^{{n_k}(\lambda _1 -2\delta)}\norm{u}_{n_k}.
\end{split}
\end{displaymath}
By iterating, we have that 
\begin{displaymath}
\norm{A^{n_km}(p)u}_{n_k}  \geq \frac{1}{4^m} e^{n_km(\lambda _1 -2\delta)}\norm{u}_{n_k} \quad \text{for $m\in \mathbb N$.}
\end{displaymath}
Consequently, 
\begin{displaymath}
\begin{split}
\lambda (p_k,u)&\geq \lim _{m\to \infty} \frac{1}{n_km}\log \left(\norm{A^{n_km}(p)u}_{n_k} \right) \\
&\geq \lim _{m\to \infty} \frac{1}{n_km} \log \left( \frac{1}{4^m} e^{n_km(\lambda _1 -2\delta)}\norm{u}_{n_k} \right)\\
&=\lambda _1-2\delta- \frac{\log 4}{n_k} +\frac{1}{n_k}\lim _{m\to \infty}\frac{1}{m}\log \left(\norm{u}_{n_k}\right)\\
&\geq \lambda_1 -3\delta,
\end{split}
\end{displaymath}
for $k$ sufficiently  large  which proves our claim. 

\end{proof}

Let $i^k_1=\max\{i; V_i(p_k)\cap C^{0,1}_0\neq \{0\}\}$. Since $V_{i+1}(p_k)\subset V_i(p_k)$ for each $i\in \mathbb N$, we note that 
$V_i(p_k)\cap C^{0,1}_0\neq \{0\}$ for every $i\in \{1, \ldots, i^k_1\}$.

\begin{corollary} \label{cor: l_i0 geq l_1}
We have that 
$$\lambda_{i^k_1}(p_k)\geq \lambda_1-3\delta.$$
\end{corollary}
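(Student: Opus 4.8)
The plan is to deduce Corollary~\ref{cor: l_i0 geq l_1} from Corollary~\ref{cor: growth in Coo} together with the definition of $i^k_1$. First I would fix $k$ large enough so that Corollary~\ref{cor: growth in Coo} applies, and recall that $i^k_1=\max\{i;\, V_i(p_k)\cap C^{0,1}_0\neq \{0\}\}$, so that we can pick a nonzero vector $u\in V_{i^k_1}(p_k)\cap C^{0,1}_0$. By Corollary~\ref{cor: growth in Coo}, since $u\in C^{0,1}_0\setminus\{0\}$, we have
\[
\lambda(p_k,u)=\limsup_{n\to\infty}\frac1n\log\norm{A^n(p_k)u}\geq \lambda_1-3\delta.
\]
On the other hand, $u$ lies in $V_{i^k_1}(p_k)$, and by the second part of item (3) of Theorem~\ref{thm:Oseledets} applied to the periodic measure $\mu_{p_k}$ (for which the Oseledets decomposition is just the decomposition of $\B$ into the fibers over the periodic orbit), every vector in $V_{i+1}(p_k)$ has Lyapunov exponent at most $\lambda_{i+1}(p_k)$. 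Writing $i^k_1=i+1$, i.e. $i=i^k_1-1$, this gives $\lambda(p_k,u)\leq \lambda_{i^k_1}(p_k)$.

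Combining the two inequalities yields $\lambda_{i^k_1}(p_k)\geq \lambda(p_k,u)\geq \lambda_1-3\delta$, which is exactly the claim. I would spell out the one subtlety: when $i^k_1=1$ the bound $\lambda(p_k,u)\le\lambda_{i^k_1}(p_k)=\lambda_1(p_k)$ is immediate since $\lambda_1(p_k)$ is the top exponent and dominates $\lambda(p_k,v)$ for \emph{every} $v$; when $i^k_1\ge 2$ one uses the $V$-part estimate of Theorem~\ref{thm:Oseledets}(3) with the index shifted by one. Either way the conclusion $\lambda_{i^k_1}(p_k)\ge\lambda_1-3\delta$ holds.

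I do not expect any real obstacle here: the corollary is essentially bookkeeping, matching the cone-growth lower bound from Corollary~\ref{cor: growth in Coo} against the a priori upper bound that the Oseledets/MET structure imposes on vectors in the filtration subspace $V_{i^k_1}(p_k)$. The only point requiring a moment's care is making sure the index $i^k_1$ is correctly aligned with the MET statement (the $V_{i+1}$ subspace carries exponents $\le\lambda_{i+1}$), so that one concludes a bound on $\lambda_{i^k_1}(p_k)$ and not on $\lambda_{i^k_1-1}(p_k)$ or $\lambda_{i^k_1+1}(p_k)$.
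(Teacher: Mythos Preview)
Your proposal is correct and follows exactly the same approach as the paper: pick a nonzero $u\in V_{i^k_1}(p_k)\cap C^{0,1}_0$, apply Corollary~\ref{cor: growth in Coo} to get $\lambda(p_k,u)\ge\lambda_1-3\delta$, and conclude via the filtration bound $\lambda(p_k,u)\le\lambda_{i^k_1}(p_k)$. The paper's proof is terser---it simply says ``in particular'' where you spell out the index alignment with Theorem~\ref{thm:Oseledets}(3)---but the argument is identical.
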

\begin{proof}
Take $0\neq u\in V_{i^k_1}(p_k)\cap C^{0,1}_0$. It follows from Corollary~\ref{cor: growth in Coo} that $\lambda (p_k,u)\geq \lambda_1 -3\delta$. In particular, $\lambda_{i^k_1}(p_k)\geq \lambda_1-3\delta$ as claimed.
\end{proof}

\begin{corollary}\label{cor: dimenions}
We have that 
\begin{displaymath}
\text{dim}(E_1(p_k)\oplus \ldots \oplus E_{i^k_1}(p_k))= \text{dim}(E_1(x)),
\end{displaymath}
for every $k\gg 0$.
\end{corollary}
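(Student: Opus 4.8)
The plan is to establish the equality of dimensions by proving two inequalities, using the cone $C^{0,1}_0$ as the bridge between the Oseledets data at the periodic point $p_k$ and at the regular point $x$. For the inequality $\dim(E_1(p_k)\oplus\cdots\oplus E_{i^k_1}(p_k))\le \dim E_1(x)$, I would argue as follows. By definition of $i^k_1$ and the remark preceding Corollary~\ref{cor: l_i0 geq l_1}, the subspace $V_{i^k_1}(p_k)$ meets $C^{0,1}_0$ nontrivially; more importantly, I want to show that the finite-dimensional complement $W:=E_1(p_k)\oplus\cdots\oplus E_{i^k_1}(p_k)$, which is a complement to $V_{i^k_1+1}(p_k)$, injects into $E_1(x)$ under the projection $\pi_E$ along $V_2(f^{n_k}(x))$ onto $E_1(f^{n_k}(x))$ (identified with $E_1(x)$ since $k\gg 0$ makes the splittings at $x$ and $f^{n_k}(x)$ close). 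Concretely, I would show that every nonzero $u\in W$ lies in the cone $C^{0,1}_0$: this is because $W$ carries exactly the top exceptional exponents of $p_k$ (all $\ge \lambda_{i^k_1}(p_k)\ge \lambda_1-3\delta$ by Corollary~\ref{cor: l_i0 geq l_1}), whereas any vector with a nonzero component outside the cone would, by the backward contraction of the cone complement and Lemma~\ref{lem: main 1}, have slower growth — forcing it into $V_2$-dominated behaviour, a contradiction with the exponent of $W$. Once $W\subset C^{0,1}_0$, invertibility of $\pi_E|_W$ follows: if $u\in W$ had $\pi_E u=0$ then $u\in V_2(f^{n_k}(x))$, so $\lambda(p_k,u)\le\lambda_2+\delta<\lambda_1-3\delta$ (using $\delta<\delta_0$), again contradicting Corollary~\ref{cor: growth in Coo}. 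Hence $\dim W\le \dim E_1(x)$.

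For the reverse inequality $\dim E_1(x)\le \dim W$, I would run the dual argument using the backward cocycle, or more directly observe that $A^{n_k}(p_k)$ maps $C^{0,1}_0$ into itself (established in the proof of Corollary~\ref{cor: growth in Coo}) and expands the $E$-component by a definite factor $e^{n_k(\lambda_1-2\delta)}$ while the whole norm on $C^{0,1}_0$ is comparable to the $E$-component. Therefore the part of the return map acting "along the cone direction" has at least $\dim E_1(x)$ expanding directions with exponent $\ge\lambda_1-3\delta>\lambda_2+\delta\ge\lambda_{i^k_1+1}(p_k)$, so these directions must be accommodated inside $W=E_1(p_k)\oplus\cdots\oplus E_{i^k_1}(p_k)$, giving $\dim E_1(x)\le\dim W$. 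Combining the two inequalities yields the claim.

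The main obstacle I anticipate is making the identification "$E$-component projection restricted to $W$ is an isomorphism onto $E_1(x)$" fully rigorous, i.e. controlling the Oseledets splitting of the periodic orbit $p_k$ in terms of the splitting of $x$ purely through the closeness estimate~\eqref{eq: Anosov closing 2} and the continuity of the splitting on $\Reg_{\delta,N}$. The subtle point is that $p_k$ need not lie in $\Reg_{\delta,N}$, so one cannot directly quote continuity of the splitting there; instead one must transport estimates along the shadowing orbit using the Lyapunov-norm bounds~\eqref{ineq: norm x Lyapunov norm} and~\eqref{ineq: K delta growth} together with Lemma~\ref{lem: main 1}, exactly as in the proof of Corollary~\ref{cor: growth in Coo}. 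I would also need to be careful that the cone inclusion and the exponent separation $\lambda_1-3\delta>\lambda_2+\delta$ hold simultaneously, which is guaranteed by the choice $\delta<\delta_0<\tfrac1d\min_i\{\theta\alpha,\lambda_i-\lambda_{i+1}\}$ fixed at the start of Section~\ref{MT}.
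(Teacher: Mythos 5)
Your proposal for the inequality $\dim W\le\dim E_1(x)$, where $W:=E_1(p_k)\oplus\cdots\oplus E_{i^k_1}(p_k)$, has a genuine gap. The claim that every nonzero $u\in W$ lies in $C^{0,1}_0$ is unjustified, and the heuristic you give for it inverts what Lemma~\ref{lem: main 1} actually proves: the lemma shows that vectors which \emph{start} in the cone stay in it and grow at rate $\ge\lambda_1-2\delta$; it does not say that a vector growing quickly under the return map of $p_k$ must have started in the cone. A priori the top Oseledets direction of $p_k$, to which forward iterates of generic vectors in $W$ align, need not lie in the cone $C^{0,1}_0$ anchored at $x$, and the "backward contraction of the cone complement" you invoke is not available — the cone complement is not a subspace and is not preserved backward. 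Since $W\subset C^{0,1}_0$ is exactly what is needed for the projection $\pi_E|_W$ to be injective, this direction of your argument does not close. The paper takes a different route for this half: it sums Lyapunov exponents. If $\dim W>d_1(\mu)$, Corollary~\ref{cor: l_i0 geq l_1} gives $\gamma_{d_1(\mu)+1}(p_k)\ge\lambda_1-3\delta$, so $\sum_{i=1}^{d_1(\mu)+1}\gamma_i(p_k)\ge(d_1(\mu)+1)(\lambda_1-3\delta)$; upper semicontinuity (Lemma~\ref{lem: upper semic}) bounds this sum above by $d_1(\mu)\lambda_1+\lambda_2+\delta$ for $k\gg0$, yielding $(3d_1(\mu)+4)\delta>\lambda_1-\lambda_2$, which contradicts the choice of $\delta_0$. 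No geometric comparison of the two Oseledets splittings is needed.

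For the reverse inequality your intuition is closer to the paper's, but the counting of "expanding directions along the cone" is not a rigorous argument (the cone is not a subspace, and the bound $\lambda_{i^k_1+1}(p_k)\le\lambda_2+\delta$ you use has not been established). The paper's argument is cleaner and purely linear-algebraic: decompose a basis $\{u_1,\ldots,u_{d_1(\mu)}\}$ of $E_1(x)$ along $W\oplus V_{i^k_1+1}(p_k)$; if $\dim W<d_1(\mu)$, the $W$-components are linearly dependent, so a nontrivial combination of the $u_i$ lies entirely in $V_{i^k_1+1}(p_k)$; but that combination is a nonzero element of $E_1(x)\subset C^{0,1}_0$, contradicting $V_{i^k_1+1}(p_k)\cap C^{0,1}_0=\{0\}$, which is the defining property of $i^k_1$. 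This bypasses any exponent estimate at $p_k$ and any quantitative closeness of the two splittings — precisely the obstacle you correctly flagged at the end but did not resolve.
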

\begin{proof}
Let $\hat{d}_{i^k_1}=\text{dim}(E_1(p_k)\oplus \ldots \oplus E_{i^k_1}(p_k))$. By Corollary~\ref{cor: l_i0 geq l_1}, we have  that $\gamma_i(p_k)\geq \gamma_i(\mu)-3\delta$
for every $i\in \{1, \ldots,\hat{d}_{i^k_1}\}$. Therefore, it follows from Lemma~\ref{lem: upper semic} and the choice of $\delta$  that $\hat{d}_{i^k_1}\leq d_1(\mu)$. 
Indeed, suppose $\hat{d}_{i^k_1}> d_1(\mu)$. In particular, $\gamma _{d_1(\mu)+1}(p_k)\geq \lambda_1-3\delta$. Thus, on the one hand we have that
$$\sum _{i=1}^{d_1(\mu)+1} \gamma _{i}(p_k)\geq (d_1(\mu)+1)(\lambda_1-3\delta).$$
On the other hand, by  Lemma~\ref{lem: upper semic} we have that 
$$\sum _{i=1}^{d_1(\mu)+1} \gamma _{i}(p_k)\leq \sum _{i=1}^{d_1(\mu)+1} \gamma _{i}(\mu) +\delta = d_1(\mu)\lambda_1 +\lambda _2 +\delta,$$
for every $k\gg0$. Combining these two inequalities we obtain that
$$(3d_1(\mu)+4)\delta >\lambda _1-\lambda _2,$$
which yields  a contradiction with our  choice of $\delta$. Hence, we conclude that $\hat{d}_{i^k_1}\leq d_1(\mu)$.

In order to obtain the reverse inequality, let us suppose  that  $\hat{d}_{i^k_1}< d_1(\mu)$. Let $\{u_1,\ldots u_{d_1(\mu)}\}$ be a linearly independent subset of $E_1(x)$ and 
write $u_i$ in the form \[u_i=u^i_{p_k}+v^i_{p_k}\quad \text{where $u^i_{p_k} \in E_1(p_k)\oplus \ldots \oplus E_{i^k_1}(p_k)$ and $v^i_{p_k} \in V_{i^k_1+1}(p_k)$,}\] 
for $i=\{1,\ldots , d_1(\mu)\}$. Since $\hat{d}_{i^k_1}< d_1(\mu)$, it follows that $\{u^i_{p_k}\}_{i=1}^{d_1(\mu)}$ is a linearly dependent subset of $E_1(p_k)\oplus \ldots \oplus E_{i^k_1}(p_k)$. Thus, we may assume without loss of generality that 
$$u^1_{p_k}=a_2u^2_{p_k}+\ldots +a_{d_1(\mu)}u^{d_1(\mu)}_{p_k},$$
for some $a_i\in \mathbb{R}$, $i\in \{2,\ldots d_1(\mu)\}$. Consequently, on the one hand we have that 
$$ 0\neq u_1-a_2u_2-\ldots -a_{d_1(\mu)}u_{d_1(\mu)}\in E_1(x) \subset C^{0,1}_0.$$
On the other hand,
$$0\neq u_1-a_2u_2-\ldots -a_{d_1(\mu)}u_{d_1(\mu)}=v^1_{p_k}-a_2v^2_{p_k}-\ldots -a_{d_1(\mu)}v^{d_1(\mu)}_{p_k}\in V_{i^k_1+1}(p_k),$$
contradicting the choice of $i^k_1$. Thus, $\hat{d}_{i^k_1}= d_1(\mu)$ as claimed.
\end{proof}

Now, as a simple consequence of the previous two corollaries we obtain the following result. 
\begin{corollary} \label{cor: app largest multiplicities}
$$\gamma_i(p_k)\geq \gamma_i(\mu)-3\delta $$
for every $i=1, \ldots,d_1(\mu)$ and $k\gg 0$.
\end{corollary}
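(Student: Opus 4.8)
The plan is to derive Corollary~\ref{cor: app largest multiplicities} directly from Corollaries~\ref{cor: l_i0 geq l_1} and~\ref{cor: dimenions}, which together already pin down the behaviour of the $d_1(\mu)$ largest Lyapunov exponents of $p_k$. First I would recall that by definition $\gamma_i(p_k)$ for $i=1,\ldots,\dim(E_1(p_k)\oplus\cdots\oplus E_{i^k_1}(p_k))$ are exactly the Lyapunov exponents $\lambda_1(p_k)\ge\lambda_2(p_k)\ge\cdots\ge\lambda_{i^k_1}(p_k)$ listed with multiplicities (the exceptional exponents of the periodic cocycle are simply the logarithms of the moduli of the eigenvalues of $A^{n_k}(p_k)$ divided by $n_k$, so the counting-with-multiplicity convention applies cleanly). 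By Corollary~\ref{cor: dimenions}, this block has dimension exactly $d_1(\mu)$ for all $k\gg 0$, so the indices $i=1,\ldots,d_1(\mu)$ are precisely the ones corresponding to the exponents $\lambda_1(p_k),\ldots,\lambda_{i^k_1}(p_k)$.

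Next I would invoke monotonicity: since $\lambda_1(p_k)\ge\lambda_2(p_k)\ge\cdots\ge\lambda_{i^k_1}(p_k)$, the smallest of these is $\lambda_{i^k_1}(p_k)$, and Corollary~\ref{cor: l_i0 geq l_1} gives $\lambda_{i^k_1}(p_k)\ge\lambda_1-3\delta$. Hence every $\gamma_i(p_k)$ with $i\in\{1,\ldots,d_1(\mu)\}$ satisfies
\[
\gamma_i(p_k)\ge \lambda_{i^k_1}(p_k)\ge \lambda_1-3\delta=\gamma_i(\mu)-3\delta,
\]
where the final equality is just the definition of the counting-with-multiplicity exponents, namely $\gamma_i(\mu)=\lambda_1(\mu)=\lambda_1$ for all $i\le d_1(\mu)$. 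This holds for all $k$ large enough so that the conclusions of Corollaries~\ref{cor: l_i0 geq l_1} and~\ref{cor: dimenions} are available, which completes the argument.

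There is no serious obstacle here; the corollary is essentially a repackaging of the two preceding ones. The only point that requires a little care is the bookkeeping identifying the indices $1,\ldots,d_1(\mu)$ with the exponents $\lambda_1(p_k),\ldots,\lambda_{i^k_1}(p_k)$ counted with multiplicities, which is exactly what Corollary~\ref{cor: dimenions} provides, together with the trivial observation that the exceptional exponents $\lambda_1(p_k)>\cdots>\lambda_{i^k_1}(p_k)$ are listed in decreasing order so their minimum is the last one. I would therefore present the proof of Corollary~\ref{cor: app largest multiplicities} in two or three short lines: state that $\gamma_i(p_k)\ge\lambda_{i^k_1}(p_k)$ for $i\le d_1(\mu)$ by Corollary~\ref{cor: dimenions} and monotonicity, apply Corollary~\ref{cor: l_i0 geq l_1}, and identify $\gamma_i(\mu)=\lambda_1$.
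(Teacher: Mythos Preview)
Your proposal is correct and matches the paper's approach exactly: the paper simply declares the corollary ``a simple consequence of the previous two corollaries'' without writing out a proof, and what you have written is precisely that simple consequence. The only inessential extra is the eigenvalue description of $\gamma_i(p_k)$, which you do not actually use; the argument goes through purely from the definitions of $\gamma_i$ together with Corollaries~\ref{cor: l_i0 geq l_1} and~\ref{cor: dimenions}.
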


\subsection{Approximation of the second largest Lyapunov exponent} We proceed in a similar manner to that in  Subsection~\ref{sec: largest LP}.  
For each $1\leq j\leq n_k$, let us consider the splitting  $\B=E_{1}(f^j(x))\oplus E_{2}(f^j(x))\oplus V_3(f^j(x))$.
We can write each $u\in \B$ as 
\begin{equation}\label{repr}u=u^j_{E_1} + u^j_{E_2}+u^j_V, \quad \text{where $u^j_{E_i}\in E_{i}(f^j(x))$ for $i=1,2$ and $u^j_V\in V_3(f^j(x))$}.\end{equation}
For $\gamma \in (0, 1)$,  let us consider the cone $C^{j,2}_{\gamma }$ defined (in terms of the decomposition in~\eqref{repr}) by 
\[
C^{j,2}_{\gamma } =\bigg{\{}u\in \B:   \norm{u^j_V}_{f^j(x)}\leq (1-\gamma) \norm{u^j_{E_2}}_{f^j(x)} \bigg{\}}. 
\]
As before, in order to simplify the notation, we will write $\norm{\cdot}_j$ for the Lyapunov norm at the point $f^j(x)$.

\begin{lemma} \label{lem: main 2}
Let $u\in C^{j,2}_{0}\setminus \{0\}$ for some $j\in \{0,\ldots ,n_k-1\}$. Then, either $u\in C^{j,1}_{0}$ or 
\begin{equation}\label{ineq: Lyap norm cone 2}
\norm{(A(f^j(p_k))u)^{j+1}_{E_2}}_{j+1}\geq e^{\lambda _2 -2\delta}\norm{u^j_{E_2}}_j
\end{equation}
and  
\begin{equation}\label{ineq: cone 2}
A(f^j(p_k))u \in C^{j+1,2}_{\gamma},
\end{equation}
for some $\gamma \in (0,1)$ and every $k$ sufficiently large. Moreover, $k$ and $\gamma$ do not depend on $u$ or $j$.
\end{lemma}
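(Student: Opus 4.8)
The idea is to mimic the argument of Lemma~\ref{lem: main 1}, but now working inside the two-dimensional-index splitting $\B=E_1(f^j(x))\oplus E_2(f^j(x))\oplus V_3(f^j(x))$ and tracking the $E_2$-component rather than the $E_1$-component. Fix $u\in C^{j,2}_0\setminus\{0\}$ and assume $u\notin C^{j,1}_0$, i.e. writing $u=u^j_{E_1}+u^j_{E_2}+u^j_V$ we have $\norm{u^j_{E_1}+u^j_V}^{(1)}_j > \norm{u^j_{E_1}}^{(1)}_j$ in the level-$1$ cone sense; the point of excluding $C^{j,1}_0$ is that on that cone the direction is already dominated by the $\lambda_1$-behaviour and is handled by the previous subsection, so we only need a lower bound on the $E_2$-growth in the complementary regime, where $\norm{u^j_{E_2}}_j$ is comparable to $\norm{u}_j$ (up to a constant coming from the $K_\delta$ estimates and part iv) of Proposition~\ref{lem: version of theo 2}, since $u$ lies in $C^{j,2}_0$ so $\norm{u^j_V}_j\le\norm{u^j_{E_2}}_j$, and being outside $C^{j,1}_0$ forces $\norm{u^j_{E_1}}_j$ to be controlled by $\norm{u^j_{E_2}}_j+\norm{u^j_V}_j$ up to a constant).

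First I would set $v=A(f^j(x))u$ and $w=A(f^j(p_k))u$, exactly as before, with $B_j=A(f^j(p_k))-A(f^j(x))$, so $w=v+B_ju$. From \eqref{ineq: Lyapunov norm} applied with $i=2$ we get $\norm{v^{j+1}_{E_2}}_{j+1}\ge e^{\lambda_2-\delta}\norm{u^j_{E_2}}_j$ and $\norm{v^{j+1}_V}_{j+1}\le e^{\tilde\lambda\,}\norm{u^j_V}_j$ (here $\tilde\lambda<\lambda_2$, or if $s=2$ one uses the $V_3$-estimate; in either case the $V_3$ part contracts relative to the $E_2$ part). The Hölder bound on $\norm{B_j}$ together with \eqref{ineq: norm x Lyapunov norm} and \eqref{ineq: norm x Lyapunov norm operator} gives $\norm{B_ju}_{j+1}\le K_\delta(f^{j+1}(x))^2\norm{B_j}\,\norm{u}$, and using $K_\delta(f^{j+1}(x))\le Ne^{\delta\min\{j+1,n_k-j-1\}}$ plus $2\delta-\theta\alpha<0$ (guaranteed by the choice of $\delta_0$) and the comparability $\norm{u}\le\text{const}\cdot\norm{u^j_{E_2}}_j$ in the regime $u\notin C^{j,1}_0$, this yields $\norm{B_ju}_{j+1}\le \tilde C k^{-\alpha}\norm{u^j_{E_2}}_j$ for a constant $\tilde C$ independent of $j,n_k$. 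Then $\norm{w^{j+1}_{E_2}}_{j+1}\ge\norm{v^{j+1}_{E_2}}_{j+1}-\norm{B_ju}_{j+1}\ge e^{\lambda_2-\delta}\norm{u^j_{E_2}}_j-\tilde Ck^{-\alpha}\norm{u^j_{E_2}}_j\ge e^{\lambda_2-2\delta}\norm{u^j_{E_2}}_j$ for $k$ large, which is \eqref{ineq: Lyap norm cone 2}. For the cone inclusion \eqref{ineq: cone 2}, bound $\norm{w^{j+1}_{E_2}}_{j+1}$ above by $\hat C\norm{u^j_{E_2}}_j$ (triangle inequality again), bound $\norm{w^{j+1}_V}_{j+1}\le e^{\tilde\lambda}\norm{u^j_V}_j+\tilde Ck^{-\alpha}\norm{u^j_{E_2}}_j\le(e^{\tilde\lambda}+\tilde Ck^{-\alpha})\norm{u^j_{E_2}}_j$, and then estimate $\norm{w^{j+1}_{E_2}}_{j+1}-\norm{w^{j+1}_V}_{j+1}\ge(e^{\lambda_2-\delta}-e^{\tilde\lambda}-2\tilde Ck^{-\alpha})\norm{u^j_{E_2}}_j$; since $\tilde\lambda<\lambda_2$ this is positive for $k$ large, and combining with the upper bound on $\norm{w^{j+1}_{E_2}}_{j+1}$ produces a $\gamma\in(0,1)$, independent of $u$ and $j$, with $\norm{w^{j+1}_V}_{j+1}\le(1-\gamma)\norm{w^{j+1}_{E_2}}_{j+1}$, i.e. $w\in C^{j+1,2}_\gamma$.

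\textbf{Main obstacle.} The delicate point — absent in Lemma~\ref{lem: main 1} — is the dichotomy: one must show that if $u\in C^{j,2}_0$ but $u\notin C^{j,1}_0$, then $\norm{u^j_{E_2}}_j$ genuinely controls $\norm{u}_j$, so that the error term $\norm{B_ju}_{j+1}$ (which is naturally bounded in terms of $\norm{u}$) can be absorbed into $e^{\lambda_2-\delta}\norm{u^j_{E_2}}_j$. Being outside $C^{j,1}_0$ says $\norm{u^j_{E_1}}^{(1)}_j$ is bounded by $\norm{(u-u^j_{E_1})}^{(1)}_j = \norm{u^j_{E_2}+u^j_V}^{(1)}_j$ up to the factor coming from the level-$1$ cone constant, and then part iv) of Proposition~\ref{lem: version of theo 2} (with $K^1,K^2$ as in the proof of Proposition~\ref{prop: properties of the lyapunov norm}) lets one pass between $\norm{\cdot}$ and $\norm{\cdot}_j$ and peel off components, giving $\norm{u^j_{E_1}}_j\le\text{const}\cdot(\norm{u^j_{E_2}}_j+\norm{u^j_V}_j)\le\text{const}\cdot\norm{u^j_{E_2}}_j$ using $u\in C^{j,2}_0$. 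One has to be careful that all these comparison constants are uniform over $j\in\{0,\dots,n_k-1\}$ and $k$, which follows because the Oseledets splitting and Lyapunov norm are continuous on the compact set $\Reg_{\delta,N}$ and $f^j(x)$ stays in (a uniform neighbourhood of) that set along the relevant orbit segment — exactly as exploited in Corollary~\ref{cor: growth in Coo}. Once this comparability is in hand, the rest is the same estimate as in Lemma~\ref{lem: main 1} with $\lambda_1$ replaced by $\lambda_2$ and $V_2$ replaced by $V_3$.
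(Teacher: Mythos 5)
Your plan follows the paper's proof closely and the overall skeleton is right: pass to the dichotomy, bound $\norm{B_ju}_{j+1}$, and run the same cone calculation with $\lambda_2,\lambda_3$ replacing $\lambda_1,\lambda_2$. The estimate chain you sketch for \eqref{ineq: Lyap norm cone 2} and \eqref{ineq: cone 2} is essentially the paper's.

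Where you go astray is in the paragraph you flag as the ``Main obstacle.'' There is no norm-passing, no peeling, and no $K^1,K^2$ needed. The Lyapunov norm of the paper is defined directly in terms of the Oseledets decomposition and is \emph{additive} over it: with $u=u^j_{E_1}+u^j_{E_2}+u^j_V$ one has, by definition, $\norm{u}_j=\norm{u^j_{E_1}}_j+\norm{u^j_{E_2}}_j+\norm{u^j_V}_j$, and the cone $C^{j,1}_0$ is defined using that same norm (the $V_2$-component of $u$ is $u^j_{E_2}+u^j_V$, not $u^j_{E_1}+u^j_V$ as you wrote, and its Lyapunov norm is exactly $\norm{u^j_{E_2}}_j+\norm{u^j_V}_j$). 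So $u\notin C^{j,1}_0$ gives $\norm{u^j_{E_1}}_j<\norm{u^j_{E_2}}_j+\norm{u^j_V}_j$, and $u\in C^{j,2}_0$ gives $\norm{u^j_V}_j\le\norm{u^j_{E_2}}_j$; adding, $\norm{u}_j\le 4\norm{u^j_{E_2}}_j$. That is the whole content, with the universal constant $4$. Your invocation of a separate ``level-1'' norm and of the maps $K^1,K^2$ from Proposition~\ref{lem: version of theo 2} builds a route where none is needed, and, more importantly, the argument you offer for uniformity of such constants is wrong: the intermediate points $f^j(x)$, $0<j<n_k$, do \emph{not} lie in (a neighbourhood of) $\Reg_{\delta,N}$, so $K_\delta(f^j(x))$, $K^1(f^j(x))$, $K^2(f^j(x))$ grow like $Ne^{\delta\min\{j,n_k-j\}}$; they are controlled only because the Anosov-closing decay $e^{-\theta\alpha\min\{j,n_k-j\}}$ dominates, not because of any compactness/continuity along the orbit segment. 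In the actual proof this growth is confined to the $\norm{B_ju}_{j+1}$ estimate, where it is absorbed; it plays no role in comparing $\norm{u}_j$ with $\norm{u^j_{E_2}}_j$. With the algebraic bound in place your remaining computation is correct and matches the paper.
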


\begin{proof}The proof is similar to the proof of Lemma~\ref{lem: main 1}. 
Suppose that $u\in C^{j,2}_{0}\setminus C^{j,1}_{0}$ since otherwise there is nothing to prove. In particular, $4\norm{u^j_{E_2}}_j \geq \norm{u}_j$. Indeed, since $u\notin C^{j,1}_{0} $,
$$\norm{u^j_{E_1}}_j<\norm{u^j_{E_2}+ u^j_{V}}_j\leq \norm{u^j_{E_2}}_j+\norm{u^j_{V}}_j\leq 2\norm{u^j_{E_2}}_j. $$
Thus, 
\begin{equation}\label{eq: norm u2}
  \norm{u}_j\leq \norm{u^j_{E_1}}_j+\norm{u^j_{E_2}}_j+\norm{u^j_{V}}_j \leq 4 \norm{u^j_{E_2}}_j.
\end{equation}
Let $v=A(f^j(x))u$ and consider $w=A(f^j(p_k))u$. By~\eqref{ineq: Lyapunov norm}, we have that 
\begin{displaymath}
\norm{v^{j+1}_{E_2}}_{j+1}=\norm{A(f^j(x))u^j_{E_2}}_{j+1}\geq e^{\lambda _2 -\delta}\norm{u^j_{E_2}}_j
\end{displaymath}
and
\begin{equation}\label{eq: norm vjF 2}
\norm{v^{j+1}_V}_{j+1}=\norm{A(f^j(x))u^j_V}_{j+1}\leq e^{\lambda _3 +\delta}\norm{u^j_V}_j.
\end{equation}
Moreover, by considering $B_j=A(f^j(p_k))-A(f^j(x))$  we have (as in the proof of Lemma \ref{lem: main 1}) that $w=v+B_ju$ and thus 
\[w^{j+1}_{E_1}=v^{j+1}_{E_1}+ (B_ju)^{j+1}_{E_1}, \ w^{j+1}_{E_2}=v^{j+1}_{E_2}+ (B_ju)^{j+1}_{E_2} \quad  \text{and} \quad w^{j+1}_V=v^{j+1}_V+ (B_ju)^{j+1}_V.\] 
Therefore, using \eqref{eq: norm u2} and proceeding as in Lemma \ref{lem: main 1} we obtain that


$$\norm{B_ju}_{j+1}\leq \tilde{C}\frac{1}{k^{\alpha}}\norm{u^j_{E_2}}_j,$$
for some $\tilde{C}>0$ which is independent of $n_k$ and $j$. Consequently,
\begin{displaymath}
\begin{split}
\norm{w^{j+1}_{E_2}}_{j+1} & \geq \norm{v^{j+1}_{E_2}}_{j+1} -\norm{(B_ju)^{j+1}_{E_2}}_{j+1}\\
&\geq e^{\lambda _2 -\delta}\norm{u^j_{E_2}}_j -\tilde{C}\frac{1}{k^{\alpha}}\norm{u^j_{E_2}}_j \geq  e^{\lambda _2 -2\delta}\norm{u^j_{E_2}}_j,
\end{split}
\end{displaymath}
whenever $k$ is sufficiently large which is precisely inequality \eqref{ineq: Lyap norm cone 2}. In order to obtain~\eqref{ineq: cone 2}, we observe initially that
\begin{equation}\label{aux: lower 1}
\norm{w^{j+1}_{E_2}}_{j+1}\leq  e^{\lambda _2 +\delta}\norm{u^j_{E_2}}_j +\tilde{C}\frac{1}{k^{\alpha}}\norm{u^j_{E_2}}_j \leq  \hat{C}\norm{u^j_{E_2}}_j.
\end{equation}
On the other hand,
\begin{displaymath}
\norm{w^{j+1}_{E_2}}_{j+1}  \geq \norm{v^{j+1}_{E_2}}_{j+1}  -\norm{B_ju}_{j+1}
\end{displaymath}
and
\begin{displaymath}
\norm{w^{j+1}_V}_{j+1}  \leq \norm{v^{j+1}_V}_{j+1}  +\norm{B_ju}_{j+1}. 
\end{displaymath}
By combining the last two  inequalities and using that $u\in C^{j,2}_0$, we have that 
\begin{displaymath}
\begin{split}
\norm{w^{j+1}_{E_2}}_{j+1} - \norm{w^{j+1}_V}_{j+1} & \geq \norm{v^{j+1}_{E_2}}_{j+1} -\norm{v^{j+1}_V}_{j+1}  -2\norm{B_ju}_{j+1}\\
&\geq e^{\lambda _2-\delta}\norm{u^{j}_{E_2}}_{j} - e^{\lambda _3+\delta}\norm{u^{j}_V}_{j} -2\tilde{C}\frac{1}{k^{\alpha}}\norm{u^j_{E_2}}_j\\
&\geq \left( e^{\lambda _2-\delta} - e^{\lambda _3+\delta} -2\tilde{C}\frac{1}{k^{\alpha}}\right) \norm{u^j_{E_2}}_j.
\end{split}
\end{displaymath}
Taking $k$ large enough so that \[e^{\lambda _2-\delta} - e^{\lambda _3+\delta} -2\tilde{C}\frac{1}{k^{\alpha}}>0 \]and applying \eqref{aux: lower 1} to the 
previous inequality,  we conclude  that there exists $\gamma >0 $ such that 
\[\norm{w^{j+1}_{E_2}}_{j+1} - \norm{w^{j+1}_V}_{j+1}\geq \gamma \norm{w^{j+1}_{E_2}}_{j+1},\] which implies that $w=A(f^j(p))u\in C^{j+1,2}_{\gamma}$. Hence, we
conclude that~\eqref{ineq: cone 2} holds and the proof of the lemma is completed.

\end{proof}

\begin{corollary}\label{cor: growth C002}
For every $k\in \mathbb{N}$ large enough,
\begin{equation*}
\lambda (p_k,u)\geq \lambda_2 -3\delta
\end{equation*}
for every $u\in C^{0,2}_0 \setminus \{0\}$.
\end{corollary}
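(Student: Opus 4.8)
The plan is to follow the proof of Corollary~\ref{cor: growth in Coo} almost verbatim, using Lemma~\ref{lem: main 2} in place of Lemma~\ref{lem: main 1}; the only genuinely new point is that Lemma~\ref{lem: main 2} provides a dichotomy rather than a clean one-step estimate, so one must keep track of whether the orbit stays in the narrow cone $C^{\cdot,2}$ or slips into the wider cone $C^{\cdot,1}$. I would fix $u\in C^{0,2}_0\setminus\{0\}$, write $u_j=A^j(p_k)u$, and for $m\ge 0$ consider the period-boundary vectors $w_m=u_{mn_k}=A^{mn_k}(p_k)u$. As in Corollary~\ref{cor: growth in Coo}, I would first use $n_k\to\infty$ together with the continuity of the Oseledets splitting and of the Lyapunov norm on $\Reg_{\delta,N}$ to ensure, for all $k$ large enough, the inclusions $C^{n_k,1}_\gamma\subset C^{0,1}_0$ and $C^{n_k,2}_\gamma\subset C^{0,2}_0$ as well as the comparison $\norm{v}_0\ge\frac12\norm{v}_{n_k}$ for every $v\in\B$.

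The argument then divides into two cases. If $w_m\in C^{0,1}_0$ for some $m\ge 0$, I would note that $w_m\ne 0$ — once an iterate enters $C^{\cdot,1}_0$ its $E_1$-component is nonzero and is carried isomorphically by all later steps, while before that the lower bound in Lemma~\ref{lem: main 2} keeps the $E_2$-component nonzero — and then apply Corollary~\ref{cor: growth in Coo} to $w_m\in C^{0,1}_0\setminus\{0\}$ to obtain $\lambda(p_k,w_m)\ge\lambda_1-3\delta$. Since $A^{N}(p_k)w_m=A^{N+mn_k}(p_k)u$ for every $N$, this yields $\lambda(p_k,u)=\lambda(p_k,w_m)\ge\lambda_1-3\delta\ge\lambda_2-3\delta$, because $\lambda_1>\lambda_2$; so this case presents no difficulty.

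In the remaining case, $w_m\notin C^{0,1}_0$ for every $m\ge 0$. Here I would prove, by induction on the step $r$ within each period, that $u_{mn_k+r}\in C^{r,2}_0$ for $r\in\{0,\dots,n_k\}$ (with $r=n_k$ interpreted via $C^{n_k,2}_\gamma\subset C^{0,2}_0$), that the iterates never vanish, and that $\norm{(u_{mn_k+r})^{r}_{E_2}}_{r}\ge e^{r(\lambda_2-2\delta)}\norm{(u_{mn_k})^{0}_{E_2}}_{0}$. The mechanism driving the induction is that the ``bad'' alternative of Lemma~\ref{lem: main 2}, namely $u_{mn_k+r}\in C^{r,1}_0$, is impossible under the standing assumption: by the cone-invariance~\eqref{ineq: cone} it would force $w_{m+1}=u_{(m+1)n_k}\in C^{n_k,1}_\gamma\subset C^{0,1}_0$, a contradiction; hence at every step~\eqref{ineq: Lyap norm cone 2} and~\eqref{ineq: cone 2} apply. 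Taking $r=n_k$ and using $\norm{u_{mn_k}}_0\le 4\norm{(u_{mn_k})^{0}_{E_2}}_0$ (valid since $w_m\notin C^{0,1}_0$, as in the proof of Lemma~\ref{lem: main 2}) together with $\norm{u_{mn_k}}_0\ge\frac12\norm{u_{mn_k}}_{n_k}$, I would get $\norm{w_{m+1}}_{n_k}\ge\frac18 e^{n_k(\lambda_2-2\delta)}\norm{w_m}_{n_k}$; iterating over $m$ and passing to the limit of $\frac{1}{mn_k}\log\norm{w_m}_{n_k}$, exactly as at the end of the proof of Corollary~\ref{cor: growth in Coo}, gives $\lambda(p_k,u)\ge\lambda_2-2\delta-\frac{\log 8}{n_k}\ge\lambda_2-3\delta$ for $k$ large.

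I expect the only real subtlety to be the handling of this dichotomy, i.e.\ making sure that an orbit which drops from the ``$C^{\cdot,2}\setminus C^{\cdot,1}$'' regime (where the factor $e^{\lambda_2-2\delta}$ is available) into $C^{\cdot,1}$ cannot spoil the conclusion. This is resolved by the observation that $C^{\cdot,1}$ is forward-invariant and governed by the \emph{larger} exponent $\lambda_1$, so that Corollary~\ref{cor: growth in Coo} simply takes over; the re-indexing of the cones across period boundaries is harmless and is absorbed, just as in Corollary~\ref{cor: growth in Coo}, by the continuity factors $C^{n_k,\cdot}_\gamma\subset C^{0,\cdot}_0$ and $\norm{\cdot}_0\ge\frac12\norm{\cdot}_{n_k}$.
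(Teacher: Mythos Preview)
Your proof is correct and follows essentially the same strategy as the paper's own argument: split into the case where some iterate enters the wider cone $C^{\cdot,1}_0$ (and then invoke Corollary~\ref{cor: growth in Coo}) versus the case where all iterates remain in $C^{\cdot,2}_0\setminus C^{\cdot,1}_0$ (and then iterate the $e^{\lambda_2-2\delta}$ estimate from Lemma~\ref{lem: main 2}, picking up the factor $\tfrac18$ exactly as in the paper). The only cosmetic difference is that the paper states the dichotomy at an arbitrary time $mn_k+j$, while you state it at period boundaries $mn_k$ and then use forward invariance of $C^{\cdot,1}$ to rule out intermediate entries; these are equivalent, and your version is arguably a bit more explicit about why the iterates never vanish.
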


\begin{proof}
Let $k\in \mathbb{N}$ be large enough so that $C^{n_k,2}_{\gamma}\subset C^{0,2}_0$ (recall we are assuming the Oseledets splitting and the Lyapunov norm 
are continuous on $\Reg _{\delta ,N}$ and that $\lim _{k\to +\infty} d(x,f^{n_k}(x))=0$). Thus, it follows from Lemma \ref{lem: main 2} that given $u\in C^{0,2}_0 \setminus \{0\}$, either there exist $m\in \mathbb{N}$ and $j\in \{0,1,\ldots ,n_k-1\}$ so that
$$A^{n_km+j}(p_k)u\in  C^{j,1}_0$$
or 
$$A^{n_km+j}(p_k)u\in  C^{j,2}_0 \setminus  C^{j,1}_0,$$
for every $m\in \mathbb{N}$ and every $j\in \{0,1,\ldots ,n_k-1\}$. 
In the first case, it follows from Lemma \ref{lem: main 1} and Corollary \ref{cor: growth in Coo} that \[\lambda (p_k,u)\geq \lambda_1 -3\delta \geq \lambda_2 -3\delta,\] 
which gives the desired conclusion.

Suppose now that we are in the second case. By recalling~\eqref{ineq: Lyap norm cone 2}, \eqref{ineq: cone 2}  and~\eqref{eq: norm u2} together with the  fact that the Lyapunov norms at $x$ and $f^{n_k}(x)$ are close whenever $k\gg 0$, we obtain that 
\begin{displaymath}
\begin{split}
\norm{A^{n_k}(p_k)u}_{n_k} & \geq \norm{(A^{n_k}(p_k)u)^{n_k}_{E_2}}_{n_k}\geq  e^{{n_k}(\lambda _2 -2\delta)}\norm{u^0_{E_2}}_0 \\
&\geq \frac{1}{4} e^{{n_k}(\lambda _2 -2\delta)}\norm{u}_0\geq \frac{1}{8} e^{{n_k}(\lambda _2 -2\delta)}\norm{u}_{n_k}.
\end{split}
\end{displaymath}
By iterating, we conclude that 
\begin{displaymath}
\norm{A^{n_km}(p)u}_{n_k}  \geq \frac{1}{8^m} e^{n_km(\lambda _2 -2\delta)}\norm{u}_{n_k}.
\end{displaymath}
Consequently, 
\begin{displaymath}
\begin{split}
\lambda (p_k,u)&\geq \lim _{m\to \infty} \frac{1}{n_km}\log \left(\norm{A^{n_km}(p)u}_{n_k} \right) \\
&\geq \lim _{m\to \infty} \frac{1}{n_km} \log \left( \frac{1}{8^m} e^{n_km(\lambda _2 -2\delta)}\norm{u}_{n_k} \right)\\
&=\lambda _2-2\delta- \frac{\log 8}{n_k} +\frac{1}{n_k}\lim _{m\to \infty}\frac{1}{m}\log \left(\norm{u}_{n_k}\right) \\
&\geq \lambda_2 -3\delta,
\end{split}
\end{displaymath}
for any $k$ sufficiently large which proves our claim.

\end{proof}

Let $i^k_2=\max\{i; V_i(p_k)\cap C^{0,2}_0\neq \{0\}\}$.

\begin{corollary}\label{cor: ineq dimensions}
We have that
\begin{displaymath}
\text{dim}(E_1(p_k)\oplus \ldots \oplus E_{i^k_2}(p_k))\geq \text{dim}(E_1(x)\oplus E_2(x)),
\end{displaymath}
for every $k\gg 0$. 
\end{corollary}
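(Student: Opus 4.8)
The plan is to reproduce, now at level $s=2$, the argument that produced the reverse inequality $\hat d_{i^k_1}\ge d_1(\mu)$ in the proof of Corollary~\ref{cor: dimenions}. Writing $\hat d_{i^k_2}:=\dim\bigl(E_1(p_k)\oplus\cdots\oplus E_{i^k_2}(p_k)\bigr)$, the goal is $\hat d_{i^k_2}\ge d_1(\mu)+d_2(\mu)$, and I would argue by contradiction assuming $\hat d_{i^k_2}<d_1(\mu)+d_2(\mu)$.

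The one genuinely new ingredient compared with Corollary~\ref{cor: dimenions} is the inclusion $E_1(x)\oplus E_2(x)\subseteq C^{0,2}_0$: writing $u\in E_1(x)\oplus E_2(x)$ in the form~\eqref{repr} with $j=0$ one has $u^0_V=0$, so the defining condition $\norm{u^0_V}_0\le\norm{u^0_{E_2}}_0$ holds trivially. Granting this, I would fix a linearly independent set $\{u_1,\dots,u_{d_1(\mu)+d_2(\mu)}\}\subset E_1(x)\oplus E_2(x)$ and decompose each $u_i=w_i+v_i$ along the Oseledets splitting of $\mu_{p_k}$ at level $i^k_2$, i.e.\ $w_i\in E_1(p_k)\oplus\cdots\oplus E_{i^k_2}(p_k)$ and $v_i\in V_{i^k_2+1}(p_k)$, exactly as in the proof of Corollary~\ref{cor: dimenions}. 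Since the $d_1(\mu)+d_2(\mu)$ vectors $w_i$ lie in a space of dimension $\hat d_{i^k_2}<d_1(\mu)+d_2(\mu)$, they are linearly dependent, so after relabelling $w_1=\sum_{i\ge 2}a_iw_i$ for some scalars $a_i$, whence
\[
0\neq u_1-\sum_{i\ge 2}a_iu_i \;=\; v_1-\sum_{i\ge 2}a_iv_i \;\in\; V_{i^k_2+1}(p_k),
\]
the left-hand non-vanishing coming from linear independence of the $u_i$. But this same vector lies in $E_1(x)\oplus E_2(x)\subseteq C^{0,2}_0$, so $V_{i^k_2+1}(p_k)\cap C^{0,2}_0\neq\{0\}$, which contradicts the maximality in the definition of $i^k_2$. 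Hence $\hat d_{i^k_2}\ge d_1(\mu)+d_2(\mu)=\dim(E_1(x)\oplus E_2(x))$, which is the claim.

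The argument is pure linear algebra once the cone inclusion is observed, so there is no serious analytic obstacle; the only point that deserves explicit mention is that $i^k_2$ is well defined, i.e.\ the set $\{i:V_i(p_k)\cap C^{0,2}_0\neq\{0\}\}$ is bounded above. This is because, for $k$ large, any nonzero $u$ in such an intersection satisfies $\lambda(p_k,u)\ge\lambda_2-3\delta$ by Corollary~\ref{cor: growth C002}, while on the other hand $\lambda(p_k,u)\le\lambda_i(p_k)$, and only finitely many exponents $\lambda_i(p_k)$ can exceed $\lambda_2-3\delta$. Thus the corollary holds for all $k\gg0$.
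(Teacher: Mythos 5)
Your proposal is correct and follows essentially the route the paper intends: the paper's proof of this corollary simply states that it is ``analogous to the second part of the proof of Corollary~\ref{cor: dimenions},'' and you carry out exactly that analogy, with the key observation being $E_1(x)\oplus E_2(x)\subseteq C^{0,2}_0$ (since $u^0_V=0$ for such $u$), followed by the same dimension-counting and linear-dependence contradiction with the maximality of $i^k_2$. Your closing remark on why $i^k_2$ is well-defined is a sensible addition not made explicit in the paper, and it does not alter the argument.
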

\begin{proof} 
The proof is analogous to the second part of the proof of Corollary \ref{cor: dimenions}.


 
\end{proof}

\begin{corollary}We have that
\begin{equation}\label{0547}\gamma_i(p_k)\geq \gamma_i(\mu)-3\delta, \end{equation}
for every $i\in \{1, \ldots,d_1(\mu)+d_2(\mu)\}$ and $k\gg 0$.
\end{corollary}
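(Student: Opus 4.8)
The plan is to argue exactly as Corollary~\ref{cor: app largest multiplicities} was derived from Corollary~\ref{cor: growth in Coo} and Corollary~\ref{cor: dimenions}, now using the analogues Corollary~\ref{cor: growth C002} and Corollary~\ref{cor: ineq dimensions}, and then to splice the resulting estimate together with the one already obtained for the $d_1(\mu)$ largest exponents.

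First I would use the definition of $i^k_2$ to pick $0\neq u\in V_{i^k_2}(p_k)\cap C^{0,2}_0$. By Corollary~\ref{cor: growth C002} we have $\lambda(p_k,u)\geq \lambda_2-3\delta$, while, since $u\in V_{i^k_2}(p_k)$, part (3) of Theorem~\ref{thm:Oseledets} applied to the ergodic periodic measure $\mu_{p_k}$ gives $\lambda(p_k,u)\leq \lambda_{i^k_2}(p_k)$. Hence $\lambda_{i^k_2}(p_k)\geq \lambda_2-3\delta$, just as in the proof of Corollary~\ref{cor: l_i0 geq l_1}. Since $\lambda_1(p_k)>\lambda_2(p_k)>\ldots >\lambda_{i^k_2}(p_k)$ and the multiplicities of $\lambda_1(p_k),\ldots ,\lambda_{i^k_2}(p_k)$ sum to $\hat d_{i^k_2}:=\dim\bigl(E_1(p_k)\oplus\ldots\oplus E_{i^k_2}(p_k)\bigr)$, it follows that $\gamma_i(p_k)\geq \lambda_{i^k_2}(p_k)\geq \lambda_2-3\delta$ for every $i\in\{1,\ldots,\hat d_{i^k_2}\}$.

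Next, Corollary~\ref{cor: ineq dimensions} gives $\hat d_{i^k_2}\geq d_1(\mu)+d_2(\mu)$ for $k\gg 0$, so the bound $\gamma_i(p_k)\geq \lambda_2-3\delta$ is valid for every $i\in\{1,\ldots,d_1(\mu)+d_2(\mu)\}$. It remains to rewrite the right-hand side in terms of $\gamma_i(\mu)$: for $i\in\{1,\ldots,d_1(\mu)\}$ one has $\gamma_i(\mu)=\lambda_1$, and Corollary~\ref{cor: app largest multiplicities} already yields $\gamma_i(p_k)\geq \lambda_1-3\delta=\gamma_i(\mu)-3\delta$; for $i\in\{d_1(\mu)+1,\ldots,d_1(\mu)+d_2(\mu)\}$ one has $\gamma_i(\mu)=\lambda_2$, and the estimate just established gives $\gamma_i(p_k)\geq \lambda_2-3\delta=\gamma_i(\mu)-3\delta$. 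This proves~\eqref{0547}.

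All the substantive work is already contained in the preceding corollaries, so I do not expect any real obstacle here; the only point deserving a word of care is the implication ``$\lambda(p_k,u)\geq \lambda_2-3\delta$ for some $0\neq u\in V_{i^k_2}(p_k)$'' $\Rightarrow$ ``$\lambda_{i^k_2}(p_k)\geq \lambda_2-3\delta$'', which holds because no direction inside $V_{i^k_2}(p_k)$ can have Lyapunov exponent exceeding $\lambda_{i^k_2}(p_k)$ — precisely the mechanism used in Corollary~\ref{cor: l_i0 geq l_1}.
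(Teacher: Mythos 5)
Your proof is correct and follows essentially the same route as the paper: use Corollary~\ref{cor: app largest multiplicities} for the first $d_1(\mu)$ indices, then use Corollary~\ref{cor: growth C002} on a nonzero $u\in V_{i^k_2}(p_k)\cap C^{0,2}_0$ to get $\lambda_{i^k_2}(p_k)\geq\lambda_2-3\delta$, and conclude via Corollary~\ref{cor: ineq dimensions}. You merely spell out more explicitly the step the paper leaves implicit, namely that $\lambda(p_k,u)\leq\lambda_{i^k_2}(p_k)$ by part (3) of Theorem~\ref{thm:Oseledets} applied to $\mu_{p_k}$.
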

\begin{proof}
We first note that it follows from Corollary~\ref{cor: app largest multiplicities}  that~\eqref{0547} holds  for every $i\in \{1, \ldots,d_1(\mu)\}$ and $k\gg 0$. 
Now, on the one hand we have that \[\gamma_i(\mu)=\lambda_2, \quad \text{for every $i\in \{d_1(\mu)+1,\ldots, d_1(\mu)+d_2(\mu)\}$.}\] 
On the other hand, Corollary \ref{cor: growth C002} implies  that 
\[\lambda (p_k,u)\geq \lambda_2 -3\delta \quad  \text{for every $u\in V_{i^k_2}\cap C^{0,2}_0 \setminus \{0\}$ and $k\gg 0$,}\] which implies that $\lambda_{i^k_2}(p_k)\geq \lambda_2 -3\delta$. 
By Corollary~\ref{cor: ineq dimensions}, we have that \[ \gamma_i(p_k)\geq \lambda_{i^k_2}(p_k) \quad \text{for every $i\in \{d_1(\mu)+1,\ldots, d_1(\mu)+d_2(\mu)\}$,}\] 
which yields the desired conclusion.
\end{proof}

\begin{corollary}\label{sase}
We have that
\begin{displaymath}
\text{dim}(E_1(p_k)\oplus \ldots \oplus E_{i^k_2}(p_k))= \text{dim}(E_1(x)\oplus E_2(x)),
\end{displaymath}
for every $k\gg 0$. 
\end{corollary}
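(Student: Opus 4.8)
The plan is to combine the inequality already obtained in Corollary~\ref{cor: ineq dimensions} with the reverse one, following the upper-bound half of the proof of Corollary~\ref{cor: dimenions}. Write $\hat d=\text{dim}(E_1(p_k)\oplus\ldots\oplus E_{i^k_2}(p_k))$. Since Corollary~\ref{cor: ineq dimensions} already yields $\hat d\ge d_1(\mu)+d_2(\mu)$ for $k\gg 0$, it remains to show $\hat d\le d_1(\mu)+d_2(\mu)$ for $k\gg 0$.

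First I would extract a lower bound on the exponents of $p_k$. By the definition of $i^k_2$ there is a nonzero $u\in V_{i^k_2}(p_k)\cap C^{0,2}_0$, and Corollary~\ref{cor: growth C002} gives $\lambda(p_k,u)\ge\lambda_2-3\delta$; since every nonzero vector of $V_{i^k_2}(p_k)$ has Lyapunov exponent at most $\lambda_{i^k_2}(p_k)$, this forces $\lambda_{i^k_2}(p_k)\ge\lambda_2-3\delta$, and hence, the exponents counted with multiplicity being nonincreasing, $\gamma_i(p_k)\ge\lambda_2-3\delta$ for every $i\le\hat d$. Now suppose towards a contradiction that $\hat d>d_1(\mu)+d_2(\mu)=:m$, so that in particular $\gamma_{m+1}(p_k)\ge\lambda_2-3\delta$. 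Combining this with the lower bounds $\gamma_i(p_k)\ge\gamma_i(\mu)-3\delta$ from~\eqref{0547} for $i\le m$, one gets
\[
\sum_{i=1}^{m+1}\gamma_i(p_k)\ \ge\ \sum_{i=1}^{m}\bigl(\gamma_i(\mu)-3\delta\bigr)+(\lambda_2-3\delta)\ =\ d_1(\mu)\lambda_1+\bigl(d_2(\mu)+1\bigr)\lambda_2-3(m+1)\delta .
\]
On the other hand, the upper semicontinuity of the map $\nu\mapsto\gamma_1(\nu)+\ldots+\gamma_{m+1}(\nu)$, established exactly as in Lemma~\ref{lem: upper semic} (cf. Corollary~\ref{cor: upper semic p_k}), together with $\mu_{p_k}\to\mu$ in the weak$^*$ topology, gives for all $k$ large
\[
\sum_{i=1}^{m+1}\gamma_i(p_k)\ \le\ \sum_{i=1}^{m+1}\gamma_i(\mu)+\delta\ =\ d_1(\mu)\lambda_1+d_2(\mu)\lambda_2+\lambda_3+\delta .
\]
Comparing the two displays yields $\lambda_2-\lambda_3\le\bigl(3d_1(\mu)+3d_2(\mu)+4\bigr)\delta$, which contradicts $\delta<\delta_0<\frac{1}{d}(\lambda_2-\lambda_3)$ because $3d_1(\mu)+3d_2(\mu)+4<d=10\prod_{i=1}^s(d_i(\mu)+4)$. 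Therefore $\hat d\le m$, and together with Corollary~\ref{cor: ineq dimensions} this gives the claimed equality.

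I do not expect a genuine obstacle here: the argument is the exact analogue of the first half of the proof of Corollary~\ref{cor: dimenions}, with $d_1(\mu)$ replaced by $d_1(\mu)+d_2(\mu)$ and the spectral gap $\lambda_1-\lambda_2$ by $\lambda_2-\lambda_3$. The only points that call for a little care are: deducing $\lambda_{i^k_2}(p_k)\ge\lambda_2-3\delta$ from Corollary~\ref{cor: growth C002} (where one uses that every vector of $V_{i^k_2}(p_k)$ has Lyapunov exponent at most $\lambda_{i^k_2}(p_k)$, this being the ``tail'' of the Oseledets splitting of $p_k$); checking that the accumulated multiplicative constants are absorbed by $\delta$, which is precisely why the factor $d=10\prod_{i=1}^s(d_i(\mu)+4)$ was built into the choice of $\delta_0$; and the degenerate case in which $\lambda_3$ is absent (i.e.\ $l(\mu)=2$), where one repeats the computation with $\lambda_3$ replaced by any number in $[\tilde{\lambda},\lambda_2)\setminus\Lambda(\mu)$, using the $V_{s+1}$-part of the Lyapunov norm in place of $\gamma_{m+1}(\mu)$.
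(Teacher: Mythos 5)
Your argument is correct and reproduces the paper's own proof of Corollary~\ref{sase}: after invoking Corollary~\ref{cor: ineq dimensions} for the lower bound on $\hat d$, you obtain the upper bound by the same contradiction — deriving $\gamma_{m+1}(p_k)\geq\lambda_2-3\delta$ from Corollary~\ref{cor: growth C002}, summing against the upper-semicontinuity bound of Lemma~\ref{lem: upper semic}, and contradicting the choice of $\delta<\delta_0$ via the inequality $(3d_1(\mu)+3d_2(\mu)+4)\delta>\lambda_2-\lambda_3$. Your parenthetical remark on the degenerate case $l(\mu)=2$ goes slightly beyond what the paper writes (it simply uses $\lambda_3$), but otherwise the two proofs coincide step for step.
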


\begin{proof}
In a view of Corollary~\ref{cor: ineq dimensions}, it is sufficient to prove that
\begin{equation}\label{0608}
 \text{dim}(E_1(p_k)\oplus \ldots \oplus E_{i^k_2}(p_k))\leq  \text{dim}(E_1(x)\oplus E_2(x)), \quad \text{for $k\gg 0$.}
\end{equation}
In order to establish~\eqref{0608}, we adapt the arguments from the proof of
 Corollary \ref{cor: dimenions}. 
Suppose that~\eqref{0608} doesn't hold, i.e. that $\text{dim}(E_1(p_k)\oplus \ldots \oplus E_{i^k_2}(p_k))> d_1(\mu)+d_2(\mu)$. 
In particular, \[\gamma _{d_1(\mu)+d_2(\mu)+1}(p_k)\geq \lambda_2-3\delta.\] Thus, on the one hand we have that
$$\sum _{i=1}^{d_1(\mu)+d_2(\mu)+1} \gamma _{i}(p_k)\geq d_1(\mu)(\lambda_1-3\delta)+(d_2(\mu)+1)(\lambda_2-3\delta).$$
On the other hand,  Lemma~\ref{lem: upper semic} implies that 
$$\sum _{i=1}^{d_1(\mu)+d_2(\mu)+1} \gamma _{i}(p_k)\leq \sum _{i=1}^{d_1(\mu)+d_2(\mu)+1} \gamma _{i}(\mu) +\delta = d_1(\mu)\lambda_1 +d_2(\mu)\lambda _2 + \lambda _3 +\delta,$$
for every $k\gg0$. By combining the last  two inequalities, we obtain that
$$(3d_1(\mu)+3d_2(\mu)+4)\delta >\lambda _2-\lambda _3,$$
which yields a contradiction with our  choice of $\delta$. We conclude that~\eqref{0608} holds and the proof is completed. 
\end{proof}

\subsection{Conclusion of the proof of Theorem~\ref{the: main}}
%
%

More generally, for each $1\leq j\leq n_k$ and $h\in \{1,\ldots ,s\}$, let us consider the splitting  
\[\B=E_{1}(f^j(x))\oplus \ldots \oplus E_{h}(f^j(x))\oplus V_{h+1}(f^j(x)).\] 
We can write each  $u\in \B$ in the form \[u=u^j_{E_1} +\ldots+ u^j_{E_h}+u^j_V, \] 
where $u^j_{E_i}\in E_{i}(f^j(x))$ for $i\in \{1,\ldots, h\}$ and $u^j_V\in V_{h+1}(f^j(x))$. In addition, we can consider cones
\begin{displaymath}
\begin{split}
C^{j,h}_{\gamma }& =\bigg{\{} u\in \B:  \norm{u^j_V}_{f^j(x)}\leq (1-\gamma) \norm{u^j_{E_h}}_{f^j(x)}\bigg{\}},
\end{split}
\end{displaymath}
where $\gamma \in (0, 1)$ and the corresponding numbers $i^k_h=\max\{i; V_i(p_k)\cap C^{0,h}_0\neq \{0\}\}$. By repeating the previous arguments (with straightforward adjustments),  we conclude that
$$\gamma_i(p_k)\geq \gamma_i(\mu)-3\delta, $$
for every $i\in \{1, \ldots,d_1(\mu)+\ldots +d_s(\mu)\}$ and $k\gg 0$. This together with Corollary~\ref{cor: upper semic p_k} implies the conclusion of Theorem~\ref{the: main}.

\section{Applications}\label{APP}
In this section we discuss some applications of the main result of our paper.  We shall mostly restrict our attention to the case of compact cocycles in order to avoid dealing with technicalities. 
\subsection{Uniform hyperbolicity via nonvanishing of Lyapunov exponents}
We begin by recalling that the cocycle $A$ is said to be \emph{uniformly hyperbolic} if there exist a family of projections $P(x)$, $x\in M$ on $\mathcal B$ and constants $D, \lambda>0$
such that:
\begin{enumerate}
 \item for each $x\in M$, we have 
 \begin{equation}\label{UH1}
  A(x)P(x)=P(f(x))A(x)
 \end{equation}
and that the map \begin{equation}\label{UH11} A(x)\rvert \Ker P(x)\colon \Ker P(x)  \to \Ker (P(x)) \quad \text{is invertible}; \end{equation}
\item for each $x\in M$ and $n\ge 0$,
\begin{equation}\label{UH2}
 \lVert A^n(x)P(x)\rVert \le De^{-\lambda n}
\end{equation}
and
\begin{equation}\label{UH3}
 \lVert A^{-n}(x)(\Id-P(x))\rVert \le De^{-\lambda n},
\end{equation}
where \[A^{-n}(x)=(A^n(f^{-n}(x))\rvert \Ker P(f^{-n}(x)))^{-1}\colon \Ker P(x) \to \Ker P(f^{-n}(x)).\]

\end{enumerate}
We note that the condition~\eqref{UH3} can be replaced by the requirement that
\[
 \lVert A^n(x)v\rVert \ge \frac{1}{D}e^{\lambda n}\lVert v\rVert \quad \text{for $n\ge 0$ and $v\in \Ker P(x)\setminus \{0\}$.}
\]
Let $\mathcal E(f)$ denote the set of all $f$-invariant Borel probability measures on $M$ which are ergodic. Furthermore, let $\mathcal E_{per}(f)$ denote 
those measures in $\mathcal E(f)$ whose support is an $f$-periodic orbit. 
\begin{theorem}\label{tuh}
 Assume that $A\colon M \to B(\mathcal B, \mathcal B)$ is an $\alpha$-H\"{o}lder continuous cocycle that takes values in a family of compact operators on $\mathcal B$.
 Furthermore, suppose that there exist a family of projections $P(x)$, $x\in M$ and $\delta >0$ such that:
 \begin{enumerate}
  \item $x\mapsto P(x)$ is a continuous map from $M$ to $B(\mathcal B, \mathcal B)$;
  \item \eqref{UH1} and~\eqref{UH11} hold for each $x\in M$;
  \item for any $\mu \in \mathcal E_{per}(f)$,  we have that 
  \begin{equation}\label{gf}
   \lim_{n\to \infty}\frac 1 n \log \lVert A^n(x)v\rVert \le -\delta \quad \text{and} \quad  \lim_{n\to \infty} \frac 1 n \log \lVert A^n(x)w\rVert \ge \delta,
  \end{equation}
for $\mu$-a.e. $x\in M$ and every $v\in \Ima P(x)$, $w\in \Ker P(x)$.

 \end{enumerate}
Then, the cocycle $A$ is uniformly hyperbolic.
\end{theorem}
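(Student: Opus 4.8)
The plan is to deduce uniform hyperbolicity from the periodic approximation theorem by upgrading the ``obvious'' asymptotic rates on the invariant subbundles $\operatorname{Im}P$ and $\operatorname{Ker}P$ to uniform ones. First I would record two reductions. Since each $A(x)$ is compact while $A(x)|_{\operatorname{Ker}P(x)}\colon \operatorname{Ker}P(x)\to \operatorname{Ker}P(f(x))$ is an isomorphism by~\eqref{UH11}, and a compact operator between infinite-dimensional Banach spaces cannot be boundedly invertible, each $\operatorname{Ker}P(x)$ is finite-dimensional; moreover $x\mapsto\dim\operatorname{Ker}P(x)$ is locally constant (continuity of $P$) and constant along $f$-orbits, hence $\mu$-a.e.\ constant for every ergodic $\mu$. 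Second, using compactness of $M$, continuity of $P$, and sub/super-multiplicativity, the limits
\[
\beta^+:=\lim_{n\to\infty}\tfrac1n\log \sup_{x\in M}\norm{A^n(x)|_{\operatorname{Im}P(x)}},\qquad \beta^-:=\lim_{n\to\infty}\tfrac1n\log \inf_{x\in M,\,0\ne v\in\operatorname{Ker}P(x)}\frac{\norm{A^n(x)v}}{\norm v}
\]
exist, and a routine iteration (writing $n=mN+r$ and using boundedness of $A$ and of the fiberwise conorms of $A|_{\operatorname{Ker}P}$) shows that $\beta^+<0$ yields~\eqref{UH2} and $\beta^->0$ yields the stated lower bound equivalent to~\eqref{UH3}, for suitable $D,\lambda>0$. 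Together with the given~\eqref{UH1}--\eqref{UH11}, this is precisely uniform hyperbolicity, so the whole proof reduces to establishing $\beta^+<0$ and $\beta^->0$.

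To prove $\beta^+<0$, suppose $\beta^+\ge 0$. By a Schreiber-type variational principle for the subadditive sequence $x\mapsto\log\norm{A^n(x)|_{\operatorname{Im}P(x)}}$ over the compact system $(M,f)$, $\beta^+$ is the supremum over ergodic $f$-invariant measures $\nu$ of the top Lyapunov exponent $\lambda_1(A|_{\operatorname{Im}P},\nu)$; hence there is an ergodic $\nu$ with $\lambda_1(A|_{\operatorname{Im}P},\nu)>-\delta$. Applying the periodic approximation result (Theorem~\ref{the: main} with $s=1$) to $A|_{\operatorname{Im}P}$ — a compact, hence quasi-compact, cocycle — produces periodic points $p_k$ with $\mu_{p_k}\to\nu$ and $\lambda_1(A|_{\operatorname{Im}P},\mu_{p_k})\to\lambda_1(A|_{\operatorname{Im}P},\nu)>-\delta$, so $\lambda_1(A|_{\operatorname{Im}P},\mu_{p_k})>-\delta$ for large $k$, contradicting~\eqref{gf}. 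The argument for $\beta^->0$ is symmetric, applied to the finite-dimensional invertible cocycle $(A|_{\operatorname{Ker}P})^{-1}$ over $f^{-1}$ (which again satisfies the Anosov Closing property), using that~\eqref{gf} forces all of its Lyapunov exponents on periodic orbits to be $\le-\delta$.

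The point needing care is the transfer of Theorem~\ref{the: main} to the \emph{restricted} cocycles $A|_{\operatorname{Im}P}$, $A|_{\operatorname{Ker}P}$, since $P$ is only assumed continuous (so these need not inherit a global H\"older bound). In the compact setting this can be handled by instead running Theorem~\ref{the: main} for $A$ itself on all of $\mathcal B$: combined with~\eqref{gf} along periodic orbits it shows that, for every ergodic $\mu$, the Lyapunov spectrum of $A$ has a gap — no exponent lies in $(-\delta,\delta)$, and exactly $\dim\operatorname{Ker}P$ exponents (with multiplicity) exceed $\delta$ — while the Oseledets splitting of $A$ refines the continuous splitting $\mathcal B=\operatorname{Im}P(x)\oplus\operatorname{Ker}P(x)$ (each $E_j(x)$ is the sum of its intersections with $\operatorname{Im}P(x)$ and $\operatorname{Ker}P(x)$, and the slow subspace $V$ lies in $\operatorname{Im}P$, because the $\operatorname{Ker}P$-directions have finite exponents whereas $\kappa(\mu)=-\infty$). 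Feeding this refinement, continuity of $P$, and the locally constant dimension of $\operatorname{Ker}P$ back into the periodic orbits furnished by Theorem~\ref{the: main} identifies $\operatorname{Ker}P$ with the expanding subbundle, whence all $\operatorname{Ker}P$-exponents are $\ge\delta$ and all $\operatorname{Im}P$-exponents $\le-\delta$ for every ergodic measure, giving $\beta^-\ge\delta$ and $\beta^+\le-\delta$. I expect this last step — reconciling the periodic approximation of $A$ with the merely continuous splitting $\operatorname{Im}P\oplus\operatorname{Ker}P$ — to be the main technical obstacle; the variational principle and the asymptotic-to-uniform passage are routine.
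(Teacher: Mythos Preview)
Your overall plan coincides with the paper's: define the subadditive sequence $F_n(x)=\log\lVert A^n(x)P(x)\rVert$, use Schreiber's variational principle to reduce~\eqref{UH2} to bounding $\Lambda(\mu):=\lim_n \tfrac1n F_n$ for every ergodic $\mu$, apply Theorem~\ref{the: main} to the \emph{full} cocycle $A$ (not to $A\rvert_{\operatorname{Im}P}$, for exactly the H\"older reason you identify), and treat~\eqref{UH3} symmetrically. Where you diverge is in the bridge between $\Lambda(\mu)$ and the Lyapunov spectrum of $A$. You go through the refinement of the Oseledets splitting by $\operatorname{Im}P\oplus\operatorname{Ker}P$ together with a dimension count (exactly $d=\dim\operatorname{Ker}P$ exponents of $A$ exceed $\delta$, hence $E^+=\operatorname{Ker}P$). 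The paper instead proves a short lemma: $\Lambda(\mu)$ is either $-\infty$ or one of the Lyapunov exponents $\lambda_j(\mu)$ of $A$ itself. The argument is two lines --- if $\lambda_{i+1}<\Lambda(\mu)<\lambda_i$ then every $v\in\operatorname{Im}P(x)$ has exponent $<\lambda_i$, hence $\le\lambda_{i+1}$ by Oseledets, and then \cite[Proposition~14]{FLQ13} forces $\Lambda(\mu)\le\lambda_{i+1}$. This lemma, combined with the spectral gap for all ergodic $\mu$ coming from Theorem~\ref{the: main}, is what the paper invokes to get $\Lambda(\mu)\le-\delta$; it replaces your ``reconciliation'' step entirely and avoids having to track the Oseledets subspaces of the approximating periodic orbits against the continuous bundle $\operatorname{Ker}P$. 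Two small remarks on your write-up: ``compact, hence quasi-compact'' is not literally true (one still needs $\lambda(\mu)>-\infty$, which however does hold in the relevant case), and it is worth noting that the paper, like you, leaves the passage from ``$\Lambda(\mu)\notin(-\delta,\delta)$'' to ``$\Lambda(\mu)\le-\delta$'' somewhat implicit; your dimension-matching idea is one way to close it, and it is the step that deserves the care you flag.
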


\begin{proof}
 We define a sequence of maps $F_n\colon M \to \mathbb R \cup \{-\infty\}$, $n\ge 0$ by
 \[
  F_n(x)=\log \lVert A^n(x)P(x)\rVert, \quad x\in M.
 \]
\begin{lemma}\label{sub}
The sequence $(F_n)_{n\ge 0}$ is subadditive, i.e. 
\[
 F_{n+m}(x)\le F_n(f^m(x))+F_m(x) \quad \text{for every $n, m\ge 0$ and $x\in M$.}
\]
\end{lemma}
\begin{proof}[Proof of the lemma]
By~\eqref{UH1}, we have that
\[
\begin{split}
\lVert A^{n+m}(x)P(x)\rVert &=\lVert A^n(f^m(x))A^m(x)P(x)^2 \rVert \\
&=\lVert A^n(f^m(x))P(f^m(x))A^m(x)P(x)\rVert \\
&\le \lVert A^n(f^m(x))P(f^m(x) \rVert \cdot \lVert A^m(x)P(x)\rVert,
\end{split}
\]
for each $x\in M$ and $n, m\ge 0$. This readily implies the desired conclusion.
\end{proof}
Since both $x\mapsto A(x)$ and $x\mapsto P(x)$ are continuous, we have that $F_n$ is a continuous map for each $n\ge 0$. In particular, $F_1$ is integrable with respect to any $\mu\in \mathcal E(f)$. Hence, 
it follows from Lemma~\ref{sub} and  Kingman's subadditive ergodic theorem that for each $\mu \in \mathcal E(f)$, there exists $\Lambda (\mu)\in [-\infty, \infty)$ 
such that 
\[
 \Lambda (\mu)=\lim_{n\to \infty} \frac{F_n(x)}{n}\quad \text{for $\mu$-a.e. $x\in M$.}
\]
\begin{lemma}\label{0}
 We have that $\Lambda (\mu)$ is either $-\infty$ or  a Lyapunov exponent of the cocycle $A$ with respect to $\mu$. 
\end{lemma}

\begin{proof}[Proof of the lemma]
Assume that $\Lambda(\mu)\neq -\infty$ since otherwise there is nothing to prove. 
Let $\lambda_1>\lambda_2 >\ldots $ denote (distinct) Lyapunov exponents of $A$ with respect to $\mu$. Assuming that $\Lambda(\mu)$ is not a Lyapunov exponent of $A$ with respect to $\mu$, we can 
find $i$ such that $\Lambda(\mu)\in (\lambda_{i+1}, \lambda_i)$. In particular, 
\begin{equation}\label{s}
\begin{split}
 \lim_{n\to \infty}\frac 1 n \log \lVert A^n(x)v\rVert &= \lim_{n\to \infty}\frac 1 n \log \lVert A^n(x)P(x) v\rVert \\
&\le  \lim_{n\to \infty}\frac 1 n \log \lVert A^n(x)P(x)\rVert \\
&=\Lambda(\mu)<\lambda_i, 
\end{split}
\end{equation}
for $\mu$-a.e. $x\in M$ and $v\in \Ima P(x)\setminus \{0\}$. On the other hand, it follows from Theorem~\ref{thm:Oseledets} that for $\mu$-a.e. $x\in M$ and every $v\in \Ima P(x)$, there exists $j\in \mathbb N$ such that
\[
 \lim_{n\to \infty}\frac 1 n \log \lVert A^n(x)v\rVert=\lambda_j,
\]
which together with~\eqref{s} implies that
\begin{equation}\label{s1}
 \lim_{n\to \infty}\frac 1 n \log \lVert A^n(x)v\rVert \le \lambda_{i+1}.
\end{equation}
By~\eqref{s1} and~\cite[Proposition 14.]{FLQ13}, we have that $\Lambda(\mu)\le \lambda_{i+1}$ which yields a contradiction.

\end{proof}
It follows from~\eqref{gf} that all Lyapunov exponents of $A$ with respect to $\mu \in \mathcal E_{per}(f)$ belong to $\mathbb R \setminus (-\delta, \delta)$. This together
with Theorem~\ref{the: main} and Lemma~\ref{0} implies that $\Lambda (\mu) \le -\delta $ for $\mu \in \mathcal E(f)$. Using~\cite[Theorem 1.]{S}, we obtain that 
\[
 \lim_{n\to \infty} \frac{\max_{x\in M} F_n(x)}{n} \le -\delta, 
\]
which readily implies~\eqref{UH2}. One can similarly establish~\eqref{UH3}. Hence, $A$ is uniformly hyperbolic. 
\end{proof}
One can also establish the version of Theorem~\ref{tuh} for quasi-compact cocycles although under additional assumption that $\kappa (\mu)< \Lambda (\mu)$ for each $\mu \in \mathcal E(f)$. 
\begin{remark}
We emphasize that the first results  in the spirit of Theorem~\ref{tuh} are due to Cao~\cite{C}. More precisely, in the particular case of the derivative cocycle $A(x)=Df(x)$ associated to some smooth
diffeomorphism $f$ on a compact Riemmanian manifold $M$, Cao proved that the existence of a continuous and $Df$-invariant splitting 
\[
T_x M=E_x^s \oplus E_x^u \quad \text{for $x\in M$,}
\]
 together with an assumption that for each $\mu \in \mathcal E(f)$ we have 
\begin{equation}\label{as}
    \lim_{n\to \infty}\frac 1 n \log \lVert A^n(x)v\rVert <0 \quad \text{and} \quad  \lim_{n\to \infty} \frac 1 n \log \lVert A^n(x)w\rVert <0,
\end{equation}
for $\mu$-a.e. $x\in M$ and every $v\in E_x^s$, $w\in E_x^u$,  implies  that the cocycle $A$ is uniformly hyperbolic. Hence, in the statement of  Theorem~\ref{tuh} we have required that~\eqref{gf} holds for
$\mu \in \mathcal E_{per}(f)$, while Cao requires that~\eqref{as} holds for any $\mu \in \mathcal E(f)$, although without any type of uniform estimates for Lyapunov exponents as we have in~\eqref{gf}.

The importance of this type of results steems from the fact that nonvanishing of Lyapunov exponents corresponds (in general) to a weaker concept of 
\emph{nonuniform} hyperbolicity (see~\cite{BP07} for detailed discussion). Therefore, it is interesting to see under which additional assumptions, nonvanishing of Lyapunov exponents implies the existence of \emph{uniform} hyperbolic behaviour. For some more recent results in this direction and further references, we refer to~\cite{HPS}.

\end{remark}

\subsection{Sacker-Sell spectrum}
Let us assume that $M$ is compact and connected metric space and that $f\colon M \to M$ is a continuous map. Furthermore, let $A$ be a continuous cocycle over $(M, f)$  of compact and injective (although not necessarily invertible) operators on $\mathcal B$.  For each $\lambda \in \mathbb R$, we can define a new cocycle $A_{\lambda}$ by
\[
 A_{\lambda}(x)=e^{-\lambda} A(x), \quad x\in M.
\]
Finally, set 
\[
 \Sigma =\{\lambda \in \mathbb R: \text{$A_\lambda$ is not uniformly hyperbolic}\}.
\]
The set $\Sigma $ is called the \emph{Sacker--Sell spectrum} of $A$.  It was proved by Magalh\~aes~\cite{LM} (building on the original work of Sacker and Sell~\cite{SS} for cocycles acting on a finite-dimensional space)
that if $f$ has a periodic orbit, we have that:
\begin{enumerate}
 \item $\Sigma  \subset \mathbb R$ is closed;
 \item  $\Sigma=\emptyset$ or $\Sigma (\Lambda)=\cup_{i=1}^k [a_i, b_i]$ for some
 \[
 b_1\ge a_1 > b_2 \ge a_2 > \ldots >b_k\ge a_k,
 \]
or $\Sigma (\Lambda)=\cup_{i=1}^\infty [a_i, b_i]$ for some
 \[
 b_1\ge a_1 > b_2 \ge a_2 > \ldots >b_i\ge a_i >\ldots  \quad \text{such that $\lim_{i\to \infty} a_i=\lim_{i\to \infty} b_i=-\infty$. }
 \]

\end{enumerate}
The following result is due to Schreiber~\cite{S}.
\begin{theorem}\label{ts}
For each $i$, there exist $\mu_1, \mu_2 \in \mathcal E(f)$ such that $a_1$ is the Lyapunov exponent of $A$ with respect $\mu_1$ and $b_1$ is the Lyapunov exponent of $A$ with respect $\mu_2$.
\end{theorem}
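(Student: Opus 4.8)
The plan is to recover this statement from the classical Sacker--Sell spectral decomposition (\cite{SS}, and \cite{LM} for Banach-space cocycles of compact injective operators) together with Schreiber's variational principle for subadditive cocycles, \cite[Theorem~1]{S}, which is already the key tool in the proof of Theorem~\ref{tuh}. Concretely, I would show that each endpoint $a_i$, $b_i$ of a spectral band is realized as a Lyapunov exponent, by exhibiting it as the top (resp.\ bottom) of the spectrum of a suitable subcocycle and applying Schreiber's principle there. Note that this route only uses that the relevant subcocycles are \emph{fibrewise} invertible, so that $f$ being merely continuous is harmless.

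First I would handle the top edge $b_1$ of the whole spectrum. Put $\rho_+:=\lim_{n\to\infty}\frac1n\max_{x\in M}\log\norm{A^n(x)}$; this limit exists since $x\mapsto\log\norm{A^n(x)}$ is a continuous subadditive family (finiteness of the values uses injectivity of the operators). By \cite[Theorem~1]{S}, $\rho_+=\sup_{\mu\in\mathcal E(f)}\lambda(\mu)$, and since $\mu\mapsto\lambda(\mu)=\inf_n\frac1n\int\log\norm{A^n}\,d\mu$ is upper semicontinuous on the compact set of invariant measures, the supremum is attained at some $\mu_2\in\mathcal E(f)$ (pass to an ergodic component). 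I would then verify the elementary equivalence
\[
A_\lambda\ \text{uniformly contracting}\iff\lambda>\rho_+,
\]
(the forward implication from a uniform bound $\norm{A^n(x)}\le De^{(\rho_++\varepsilon)n}$, the reverse from attainment of $\rho_+$), note that uniform contraction is uniform hyperbolicity with trivial unstable bundle, and combine this with the fact that on the unbounded component of $\R\setminus\Sigma$ lying above $\Sigma$ the unstable dimension is constant and equal to $0$. Since $\Sigma$ is closed this forces $\max\Sigma=b_1=\rho_+=\lambda(\mu_2)$, the largest Lyapunov exponent of $A$ with respect to $\mu_2$.

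For a higher top edge $b_i$ ($i\ge2$) I would fix a parameter $\lambda$ in the gap $(b_i,a_{i-1})$; the spectral decomposition then gives a continuous, $A$-invariant, complemented splitting $\B=W_{i-1}(x)\oplus V_{i-1}(x)$ with $W_{i-1}(x)$ finite-dimensional and independent of the choice of $\lambda$ in the gap, such that $B:=A|_{V_{i-1}}$ is again a continuous cocycle of compact injective operators with $\Sigma(B)=\bigcup_{j\ge i}[a_j,b_j]$, whose Lyapunov exponents with respect to any $\mu$ are precisely the ones of $A$ not exceeding $b_i$. Applying the previous step to $B$ yields $\mu_2\in\mathcal E(f)$ with $b_i=\max\Sigma(B)=\lambda_B^{\mathrm{top}}(\mu_2)$, a Lyapunov exponent of $A$ with respect to $\mu_2$. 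Dually, for a bottom edge $a_i$ I would pick $\lambda$ in the gap $(b_{i+1},a_i)$ (reading $b_{i+1}=-\infty$ if no band lies below), use that $A_\lambda$ is uniformly hyperbolic with finite-dimensional unstable bundle $W_i(x)$ on which $A$ acts invertibly — this is exactly condition~\eqref{UH11} — and run the argument of the first step for the cocycle $\hat A:=A|_{W_i}$ of invertible finite-dimensional maps, now with the continuous subadditive family $g_n(x)=\log\norm{(\hat A^n(x))^{-1}}$, which satisfies
\[
g_{n+m}(x)\le g_n(x)+g_m(f^n(x)).
\]
Schreiber's principle gives $\lim_n\frac1n\max_x g_n(x)=-\inf_{\mu\in\mathcal E(f)}\lambda_{\hat A}^{\mathrm{bot}}(\mu)$, attained at some $\mu_1$, and the equivalence ``$\hat A_\lambda$ uniformly expanding $\iff\lambda<\inf_\mu\lambda_{\hat A}^{\mathrm{bot}}(\mu)$'' forces $a_i=\min\Sigma(\hat A)=\lambda_{\hat A}^{\mathrm{bot}}(\mu_1)$, the smallest and hence a genuine Lyapunov exponent of $A$ with respect to $\mu_1$.

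The hard part will be the spectral decomposition invoked in the last two steps: that a parameter in a spectral gap yields an $A$-invariant, continuous, complemented splitting into subcocycles (fibrewise invertible on the top piece, compact and injective on the bottom piece) realizing the corresponding portions of $\Sigma$, with Lyapunov exponents restricting correctly. For finite-dimensional bases this is Sacker--Sell~\cite{SS} and in the Banach setting it is due to Magalh\~aes~\cite{LM}; beyond quoting it, one must reconcile these continuous hyperbolic splittings with the a priori only measurable Oseledets data of Theorem~\ref{thm:Oseledets}, i.e.\ check that at a $\mu$-regular point the Oseledets spaces group into the two pieces according to whether their exponent lies above or below the gap. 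Once that is in place, the rest is two applications of Schreiber's principle plus the (affine, hence trivially continuous) dependence of the uniform growth rates on $\lambda$.
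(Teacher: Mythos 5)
The paper does not prove Theorem~\ref{ts}; it is stated as a direct citation to Schreiber~\cite{S} (with the finite-dimensional invertible case attributed to Johnson--Palmer--Sell~\cite{JPS}), so there is no proof in the paper to compare against. Your reconstruction via Schreiber's variational principle~\cite[Theorem~1]{S} plus Magalh\~aes' spectral decomposition~\cite{LM} is plausible and, since it runs on Schreiber's own Theorem~1, likely close in spirit to the original argument. The individual steps check out: the identity $b_1=\rho_+=\sup_{\mu\in\mathcal E(f)}\lambda(\mu)$ with the supremum attained because $\mu\mapsto\inf_n\frac1n\int\log\lVert A^n\rVert\,d\mu$ is upper semicontinuous on the compact set of invariant measures; the subadditivity $g_{n+m}(x)\le g_n(x)+g_m(f^n(x))$ for $g_n(x)=\log\lVert(\hat A^n(x))^{-1}\rVert$, which follows from $(\hat A^{n+m}(x))^{-1}=(\hat A^n(x))^{-1}(\hat A^m(f^n(x)))^{-1}$; and the dual characterization of $a_i$, where the relevant infimum is attained because $\mu\mapsto\inf_n\frac1n\int g_n\,d\mu$ is again upper semicontinuous. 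The one genuine gap you flag yourself is real and must be supplied: that a parameter in a spectral gap yields an $A$-invariant continuous complemented splitting whose two pieces carry subcocycles of the right type (finite-dimensional invertible on top, compact injective below), realizing the portions of $\Sigma$ above and below the gap, and such that at $\mu$-regular points the Oseledets spaces of $A$ sort into the two pieces according to whether their exponent exceeds the gap. This reconciliation of the continuous hyperbolic bundle with the measurable Oseledets data is what makes all the higher-band cases work; it is standard but nontrivial (it can be extracted from~\cite{LM} together with estimates in the style of Proposition~\ref{lem: version of theo 2}), and without it your argument only establishes the extreme edges $b_1$ and, when $\Sigma$ has a bottom band, the corresponding $a_k$.
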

We note that for finite-dimensional and invertible cocycles, Theorem~\ref{ts} was first  established by 
  Johnson, Palmer and Sell~\cite{JPS}.  
Let $L(\mu)$ denote the set of all finite Lyapunov exponents of $A$ with respect to $\mu$. 
\begin{corollary}\label{hod}
 Assume further that $A$ is an  $\alpha$-H\"{o}lder  cocycle such that $\Sigma \neq \emptyset$ and that $f$ satisfies Anosov closing property. Then,
 \[
  \partial \Sigma \subset \overline{\bigcup_{\mu \in \mathcal E_{per}(f)}L (\mu)} \quad \text{and} \quad \overline{\bigcup_{\mu \in \mathcal E(f)}L (\mu)} \subset \Sigma.
 \]

\end{corollary}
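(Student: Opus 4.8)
The plan is to establish the two inclusions separately, since they draw on different inputs: the inclusion $\overline{\bigcup_{\mu\in\mathcal E(f)}L(\mu)}\subset\Sigma$ is a soft consequence of the spectral characterization of uniform hyperbolicity together with the closedness of $\Sigma$, and uses neither the Anosov closing property nor Theorem~\ref{the: main}, whereas $\partial\Sigma\subset\overline{\bigcup_{\mu\in\mathcal E_{per}(f)}L(\mu)}$ is where the main result of the paper enters.

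For the soft inclusion I would argue by contradiction. Fix $\mu\in\mathcal E(f)$ and $\lambda\in L(\mu)$; since $A$ is compact we have $\kappa(\mu)=-\infty<\lambda(\mu)$ (the latter because $L(\mu)\ne\emptyset$), so $A$ is quasi-compact with respect to $\mu$ and Theorem~\ref{thm:Oseledets} provides a regular point $x$ and a nonzero vector $v$ in the Oseledets subspace associated with $\lambda$, for which $\frac1n\log\norm{A^n(x)v}\to\lambda$, i.e. $\frac1n\log\norm{A_\lambda^n(x)v}\to 0$. If $\lambda\notin\Sigma$, then $A_\lambda$ is uniformly hyperbolic; decomposing $v$ along the corresponding invariant splitting and invoking the exponential estimates~\eqref{UH2} and the lower-bound form of~\eqref{UH3}, one finds that the exponential growth rate of $A_\lambda^n(x)v$ is either $\ge\eta$ or $\le-\eta$ for the dichotomy constant $\eta>0$, contradicting that it equals $0$. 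Hence $\bigcup_{\mu\in\mathcal E(f)}L(\mu)\subset\Sigma$, and since $\Sigma$ is closed, taking closures yields $\overline{\bigcup_{\mu\in\mathcal E(f)}L(\mu)}\subset\Sigma$.

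For the first inclusion I would start from the structure theorem for $\Sigma$: the bands $[a_i,b_i]$ are pairwise disjoint and separated (indeed $a_i>b_{i+1}$), so $\partial\Sigma=\{a_i\}_i\cup\{b_i\}_i$ and each endpoint is a finite real number. Fix such an endpoint $c$. By Theorem~\ref{ts}, $c$ is an exceptional Lyapunov exponent of $A$ with respect to some $\nu\in\mathcal E(f)$, say $c=\lambda_j(\nu)\in L(\nu)$; as in the previous paragraph $A$ is quasi-compact with respect to $\nu$, it is $\alpha$-H\"older, and $f$ has the Anosov closing property, so Theorem~\ref{the: main} applies with $s=j$ and produces a sequence of periodic points $(p_k)$ with $\gamma_i(p_k)\to\gamma_i(\nu)$ for $i\le d_1(\nu)+\ldots+d_j(\nu)$. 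Choosing $i=d_1(\nu)+\ldots+d_{j-1}(\nu)+1$ gives $\gamma_i(p_k)\to\lambda_j(\nu)=c$; for $k$ large these values are finite and hence belong to $L(\mu_{p_k})$ with $\mu_{p_k}\in\mathcal E_{per}(f)$, so $c\in\overline{\bigcup_{\mu\in\mathcal E_{per}(f)}L(\mu)}$. Running this over all endpoints $a_i,b_i$ gives $\partial\Sigma\subset\overline{\bigcup_{\mu\in\mathcal E_{per}(f)}L(\mu)}$, completing the proof.

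I do not expect a genuine obstacle: the corollary is essentially a repackaging of Theorem~\ref{ts}, Theorem~\ref{the: main} and the closedness of $\Sigma$. The only points that need care are bookkeeping ones — checking that quasi-compactness is automatic for compact cocycles so that the multiplicative ergodic theorem and Theorem~\ref{the: main} are applicable, verifying that the band endpoints supplied by Theorem~\ref{ts} are genuine finite exceptional exponents so that the approximation can be carried out band by band, and disposing of the degenerate case $\lambda(\mu)=-\infty$, where $L(\mu)=\emptyset$ and there is nothing to prove.
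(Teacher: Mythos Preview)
Your argument is correct and matches the paper's approach: for the first inclusion you combine Theorem~\ref{ts} with Theorem~\ref{the: main} exactly as the paper does, and for the second inclusion you supply a short direct argument where the paper simply cites~\cite{LM}. The only difference is that your treatment of the second inclusion is self-contained rather than a literature reference; conceptually the two proofs coincide.
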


\begin{proof}\
 The first inclusion is a direct consequence of  Theorems~\ref{the: main} and~\ref{ts}. The second inclusion is proved in~\cite{LM}.
\end{proof}
We are hopeful that Corollary~\ref{hod} could be useful in numerical estimations of $\Sigma$ since it recognizes boundary points of $\Sigma$ as  accumulation points of Lyapunov exponents along periodic orbits (which are easy to estimate).

\subsection{Spectral radius and growth of the cocycle}
In this subsection, $\rho(C)$ will denote the spectral radius of an operator $C\in B(\mathcal B, \mathcal B)$. Furthermore, let us again consider compact, injective and continuous cocycle $A$. The following result is a particular case of~\cite[Theorem 1.4.]{IM}.
\begin{theorem}\label{idm}
 For any $\mu \in \mathcal E(f)$, we have that
 \[
  \limsup_{n\to \infty}\frac 1 n \log \rho(A^n(x))=\lim_{n\to \infty}\frac 1 n \log \lVert A^n(x)\rVert=\lambda_1(\mu) \quad \text{for $\mu$-a.e. $x\in M$.}
 \]

\end{theorem}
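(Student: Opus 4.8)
The plan is to split the two equalities. The identity $\lim_{n\to\infty}\frac1n\log\|A^n(x)\|=\lambda_1(\mu)$ for $\mu$-a.e.\ $x$ is immediate from our framework: since $A$ takes values in compact operators it is quasi-compact with respect to $\mu$ (indeed $\kappa(\mu)=-\infty$), so Theorem~\ref{thm:Oseledets} applies and gives $\lambda(\mu)=\lambda_1(\mu)$, while $\lambda(\mu)=\lim_n\frac1n\log\|A^n(x)\|$ a.e.\ by the subadditive ergodic theorem; combined with the trivial bound $\rho(C)\le\|C\|$, this also yields $\limsup_n\frac1n\log\rho(A^n(x))\le\lambda_1(\mu)$ a.e. Everything therefore reduces to the reverse inequality
\[
\limsup_{n\to\infty}\frac1n\log\rho(A^n(x))\ \ge\ \lambda_1(\mu)\qquad\text{for $\mu$-a.e.\ $x\in M$,}
\]
which we may assume nontrivial, i.e.\ $\lambda_1(\mu)>-\infty$.

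To prove it I would fix $\varepsilon>0$ and, using Proposition~\ref{lem: version of theo 2} with $s=1$ together with the $\mu$-continuity of the splitting $\B=E_1(\cdot)\oplus V_2(\cdot)$ (Remark~\ref{referee}), restrict to a full-measure set of points $x$ that are regular, belong to the set $\Omega$ of Proposition~\ref{lem: version of theo 2}, lie in a compact set $K$ on which $E_1$ and $V_2$ are continuous, and are recurrent to $K$. Poincar\'e recurrence then produces times $n_k\to\infty$ with $f^{n_k}(x)\in K$ and $f^{n_k}(x)\to x$, so that $E_1(f^{n_k}(x))\to E_1(x)$ and $V_2(f^{n_k}(x))\to V_2(x)$ in $\mathcal G(\B)$; write $\eta_k\to0$ for the corresponding gaps. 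The heart of the matter is then to iterate the single operator $\sigma_k=A^{n_k}(x)$. Fix $v\in E_1(x)\setminus\{0\}$ and decompose $\sigma_k^j v=w_1+w_2$ along $E_1(x)\oplus V_2(x)$. One checks by induction on $j$ that the defect $\|w_2\|/\|w_1\|$ stays $o(1)$ uniformly in $j$: it is pinned there because $\sigma_k$ carries $E_1(x)$ into $E_1(f^{n_k}(x))$, which is $\eta_k$-close to $E_1(x)$, while Proposition~\ref{lem: version of theo 2} dilates the $E_1$-part by at least $C(x)^{-1}e^{(\lambda_1-\varepsilon)n_k}$ and the $V_2$-part only by $C(x)e^{(\tilde\lambda-\varepsilon)n_k}$ for some $\tilde\lambda<\lambda_1(\mu)$, making the $V_2$-contribution exponentially negligible and unable to feed back. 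Consequently $\|\sigma_k^m v\|\ge c_k^m\|v\|$ for all $m\ge1$, where $c_k$ does not depend on $m$ and $\frac1{n_k}\log c_k\to\lambda_1(\mu)-\varepsilon$. Gelfand's formula then gives $\rho(\sigma_k)=\lim_m\|\sigma_k^m\|^{1/m}\ge c_k$, whence $\limsup_n\frac1n\log\rho(A^n(x))\ge\limsup_k\frac1{n_k}\log\rho(A^{n_k}(x))\ge\lambda_1(\mu)-\varepsilon$. Letting $\varepsilon\to0$ along a sequence and intersecting the relevant full-measure sets finishes the proof.

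The step I expect to be the main obstacle is exactly the uniform-in-$m$ bound $\|\sigma_k^m v\|\ge c_k^m\|v\|$: since Gelfand's formula forces $m\to\infty$, any per-step estimate whose loss grew with $m$ would be worthless, and the argument succeeds only because the $V_2(x)$-defect of the iterates does not accumulate. Making this precise is where the recurrence $f^{n_k}(x)\to x$ (which sends the gaps $\eta_k$ to $0$) and the spectral gap $\lambda_1(\mu)>\tilde\lambda$ are both genuinely used; it is also precisely the content one may instead import wholesale from~\cite[Theorem 1.4.]{IM}, of which Theorem~\ref{idm} is a particular case.
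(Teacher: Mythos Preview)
Your proposal is correct, but you should know that the paper does not actually prove Theorem~\ref{idm}: it simply records it as ``a particular case of~\cite[Theorem~1.4.]{IM}'' (Morris's generalized Berger--Wang formula) and moves on. So your route is genuinely different from the paper's, which is no route at all.

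What you do instead is give a self-contained argument entirely within the paper's own toolbox: the Oseledets splitting $E_1\oplus V_2$ from Theorem~\ref{thm:Oseledets}, the tempered bounds of Proposition~\ref{lem: version of theo 2}, Lusin/$\mu$-continuity of the splitting, and Poincar\'e recurrence to get return times $n_k$ with $f^{n_k}(x)\to x$ inside a good set. The cone argument you run on the single operator $\sigma_k=A^{n_k}(x)$ is exactly in the spirit of Lemma~\ref{lem: main 1} and Corollary~\ref{cor: growth in Coo}, only here the ``perturbation'' comes from the mismatch between the splittings at $x$ and at $f^{n_k}(x)$ rather than from an orbit-shadowing periodic point. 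Your identification of the crux is accurate: the uniform-in-$m$ lower bound on $\|\sigma_k^m v\|$ works because the leakage from $E_1(x)$ into $V_2(x)$ at each step is controlled by the gap $\eta_k$ between the projections (a scale-free quantity), while the $V_2$-part is damped relative to the $E_1$-part by the spectral gap $\lambda_1-\tilde\lambda>0$; the resulting invariant cone has width $O(\eta_k)$ and the $E_1$-component grows by a factor $(1-o_k(1))\,C(x)^{-1}e^{(\lambda_1-\varepsilon)n_k}$ per iterate, which is what feeds Gelfand's formula.

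In short: citing~\cite{IM} buys generality and brevity; your approach buys a proof that stays inside the paper and reuses its main mechanism. Both are valid, and you already note the equivalence in your last sentence.
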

We now prove the following result.
\begin{theorem}
 Assume  that $A$ is an  $\alpha$-H\"{o}lder  cocycle  and that $f$ satisfies Anosov closing property. Then,
 \[
   \lim_{n\to \infty}\max_{x\in M} \lVert A^n(x)\rVert^{1/n}=\sup_{(x, p)\in M\times \mathbb N: f^p(x)=x}\rho(A^p(x))^{1/p}.
 \]

\end{theorem}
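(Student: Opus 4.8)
The plan is to prove the two inequalities separately. Write $R=\lim_{n\to\infty}\max_{x\in M}\norm{A^n(x)}^{1/n}$ and $S=\sup_{(x,p):\,f^p(x)=x}\rho(A^p(x))^{1/p}$. First I would note that the limit defining $R$ exists: since $A^{n+m}(x)=A^n(f^m(x))A^m(x)$, the numbers $a_n:=\max_{x\in M}\norm{A^n(x)}$ are finite and submultiplicative, $a_{n+m}\le a_n a_m$, so by Fekete's lemma $\lim_n\frac1n\log a_n=\inf_n\frac1n\log a_n=:\log R\in[-\infty,\infty)$. The inequality $S\le R$ is then elementary: if $f^p(x)=x$ then $(A^p(x))^m=A^{pm}(x)$ for every $m\ge1$, so by Gelfand's formula $\rho(A^p(x))^{1/p}=\lim_{m\to\infty}\norm{A^{pm}(x)}^{1/(pm)}\le\lim_{m\to\infty}\bigl(\max_{y\in M}\norm{A^{pm}(y)}\bigr)^{1/(pm)}=R$, and taking the supremum over periodic pairs $(x,p)$ gives $S\le R$. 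In particular $S=R=0$ whenever $R=0$, so from now on we may assume $R>0$.

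For the reverse inequality $R\le S$, the idea is to combine the variational characterization of $R$ with Theorem~\ref{the: main}. Since $A$ is injective, $F_n(x):=\log\norm{A^n(x)}$ is a real-valued, continuous and subadditive sequence of functions with $\max_{x\in M}F_n(x)=\log a_n$, so Schreiber's theorem \cite[Theorem 1.]{S} yields $\log R=\lim_n\frac1n\max_{x\in M}F_n(x)=\sup_{\mu\in\mathcal E(f)}\lambda_1(\mu)$, where by Kingman's subadditive ergodic theorem $\lambda_1(\mu)=\lim_n\frac1n\int F_n\,d\mu$ is the largest Lyapunov exponent of $A$ with respect to $\mu$. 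Fix $\epsilon>0$ and choose $\mu\in\mathcal E(f)$ with $\lambda_1(\mu)>\log R-\epsilon$. Because $A$ takes values in compact operators, $\kappa(\mu)=-\infty<\lambda_1(\mu)$, so $A$ is quasi-compact with respect to $\mu$ and $l(\mu)\ge1$; applying Theorem~\ref{the: main} with $s=1$ produces periodic points $p_k$ of periods $n_k$ with $\gamma_1(p_k)\to\gamma_1(\mu)=\lambda_1(\mu)$.

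It then remains to identify $\gamma_1(p_k)$. Since $f^{n_k}(p_k)=p_k$ we have $A^{n_k m}(p_k)=(A^{n_k}(p_k))^m$, so the largest Lyapunov exponent of the periodic measure on the orbit of $p_k$ is $\gamma_1(p_k)=\lambda_1(p_k)=\lim_{m\to\infty}\frac1{n_k m}\log\norm{(A^{n_k}(p_k))^m}=\frac1{n_k}\log\rho(A^{n_k}(p_k))$, once more by Gelfand's formula. Hence $\rho(A^{n_k}(p_k))^{1/n_k}=e^{\gamma_1(p_k)}\le S$ for every $k$, and letting $k\to\infty$ gives $S\ge e^{\lambda_1(\mu)}>Re^{-\epsilon}$. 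As $\epsilon>0$ was arbitrary, $R\le S$, which together with $S\le R$ gives the claimed equality.

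The main point to be careful about is this last identification, namely that the largest exceptional Lyapunov exponent of a periodic orbit equals $\tfrac1{\text{period}}\log$ of the spectral radius of the return operator; this is just Gelfand's formula applied along the orbit, but it is the place where the periodic data enters. The rest is bookkeeping: compactness of the fibre operators forces $\kappa(\mu)=-\infty$, so Theorem~\ref{thm:Oseledets} and Theorem~\ref{the: main} are available for every ergodic $\mu$ without any extra hypothesis, and the potentially degenerate case $R=0$ (which occurs only if every return operator is quasinilpotent) is absorbed by the trivial bound $S\le R$.
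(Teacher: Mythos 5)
Your proof is correct and follows the same overall strategy as the paper: reduce the growth rate $R$ to $\sup_{\mu\in\mathcal E(f)}\lambda_1(\mu)$ via Schreiber's subadditive result, replace this by $\sup_{\mu\in\mathcal E_{per}(f)}\lambda_1(\mu)$ using Theorem~\ref{the: main}, and then identify the top Lyapunov exponent of a periodic measure with the normalized logarithm of the spectral radius of the return operator. Where you diverge is in the last identification: the paper cites Morris's Berger--Wang-type theorem (Theorem~\ref{idm}) to relate $\lim_n\frac1{np}\log\norm{A^{np}(x)}$ to $\limsup_n\frac1{np}\log\rho(A^{np}(x))$ before simplifying, whereas you observe that for a periodic orbit one already has $A^{np}(x)=(A^p(x))^n$, so Gelfand's spectral radius formula gives $\lambda_1(\mu)=\frac1p\log\rho(A^p(x))$ immediately. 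Your route is the more elementary one here, since Morris's theorem is a nontrivial ergodic-theoretic statement for general ergodic measures and is not actually needed once one has specialized to periodic orbits. You also spell out the easy inequality $S\le R$ separately via Gelfand's formula and dispose of the degenerate case $R=0$ (all return operators quasinilpotent) before invoking Theorem~\ref{the: main}; the paper treats this implicitly, and your explicit treatment is welcome since Theorem~\ref{the: main} needs quasi-compactness, i.e.\ $\lambda_1(\mu)>-\infty$, which for a compact cocycle is exactly $\lambda_1(\mu)>\kappa(\mu)$. Both arguments are valid; yours is slightly tighter.
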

\begin{proof}

It follows from~\cite[Theorem 1.]{S} and Theorem~\ref{the: main} that 
\begin{equation}\label{u}
 \lim_{n\to \infty} \frac 1 n \max_{x\in M} \log \lVert A^n(x)\rVert =\sup_{\mu \in \mathcal E(f)} \lambda_1(\mu)=\sup_{\mu \in \mathcal E_{per}(f)} \lambda_1(\mu).
\end{equation}
Assume that $\mu \in \mathcal E_{per}(f)$ is supported on an periodic orbit of a point $x\in M$ with period $p$. Then, it follows from Theorem~\ref{idm} that 
\[
\begin{split}
 \lambda_1(\mu)=\lim_{n\to \infty} \frac{1}{np} \log \lVert A^{np}(x) \rVert &=\limsup_{n\to \infty} \frac{1}{np} \log \rho( A^{np}(x)) \\
 &=\limsup_{n\to \infty} \frac{1}{np} \log \rho( (A^{p}(x))^n)\\
 &=\limsup_{n\to \infty} \frac{1}{np} \log (\rho(A^p(x)))^n \\
 &=\frac 1 p \log \rho(A^p(x)).
 \end{split}
\]
Hence, \eqref{u} implies that
\[
\lim_{n\to \infty} \frac 1 n \max_{x\in M} \log \lVert A^n(x)\rVert=\sup_{(x, p)\in M\times \mathbb N: f^p(x)=x} \log \rho(A^p(x))^{1/p}.
\]
Therefore, 
\[
 \lim_{n\to \infty}\max_{x\in M} \log \lVert A^n(x)\rVert^{1/n}=\sup_{(x, p)\in M\times \mathbb N: f^p(x)=x} \log \rho(A^p(x))^{1/p},
\]
which readily yields the desired result. 
\end{proof}
The above   result is interesting since it connects two quantities that exhibit different behaviour under the action of the cocycle: operator norm which is subadditive and spectral radius which  behaves quite badly with respect to composition of operators.

\subsection{Conjugacy between cocycles and Lyapunov exponents}
Assume now that for $i=1, 2$ we are given a cocycle $A_i$ of operators acting on $\mathcal B_i$ and  over a base space  $(M_i, f_i)$. We say that $A_1$ and $A_2$ are \emph{conjugated} if there 
exists an invertible map $h\colon M_1 \to M_2$ and a family of invertible bounded linear operators $L(x) \colon \mathcal B_1 \to \mathcal B_2$, $x\in M_1$ such that:
\begin{enumerate}
 \item \begin{equation}\label{maph} h\circ f_1=f_2 \circ h; \end{equation}
 \item  we have
 \begin{equation}\label{cc}
  A_1(x)=L(f_1(x))^{-1}A_2(h(x))L(x), \quad \text{for each $x\in M_1$.}
 \end{equation}

\end{enumerate}
\begin{remark}
In the context of smooth dynamics, this notion corresponds to the classical notion of conjugacy. Indeed, if $M_1, M_2$ are smooth compact  Riemmanian manifolds and $f_1, f_2$ are smooth diffeomorphisms, then if a differentiable map $h$ satisfies~\eqref{maph}, one can easily conclude that~\eqref{cc} holds with
\[
A(x)=Df_1(x), \quad B(x)=Df_2(x) \quad \text{and} \quad L(x)=Dh(x).
\]
\end{remark}
Observe  that it follows easily from~\eqref{cc} that
\begin{equation}\label{cc1}
 A_1^n(x)=L(f_1^n(x))^{-1}A_2^n(h(x))L(x), \quad \text{for $x\in M_1$ and $n\in \mathbb N$.}
\end{equation}

\begin{theorem}\label{wct}
 Suppose that:
 \begin{enumerate}
\item $A_1 \colon M_1 \to B(\B_1, \B_1)$ and $A_2 \colon M_2 \to B(\B_2, \B_2)$ are cocycles such that $A_1(x)$ is a compact operator for each $x\in M_1$;
 \item $(M_1, f_1)$ satisfies the Anosov closing property;
 \item $A_1$ is an  $\alpha$-H\"{o}lder  cocycle;
  \item $A_2$ is uniformly hyperbolic;
  \item $A_1$ and $A_2$ are conjugated.
 \end{enumerate}
Then, all Lyapunov exponents of $A_1$ are uniformly bounded away from zero.
\end{theorem}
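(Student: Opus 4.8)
The plan is to deduce the statement from Theorem~\ref{the: main} together with the elementary observation that the conjugacy~\eqref{cc} is harmless along periodic orbits, where it reduces to a fixed change of basis. First I would record that $A_1$ is quasi-compact with respect to every ergodic $f_1$-invariant Borel probability measure $\mu$: since $A_1$ takes values in compact operators we have $\kappa(\mu)=-\infty$, so both Theorem~\ref{thm:Oseledets} and Theorem~\ref{the: main} apply to $A_1$. Let $D,\lambda>0$ be the constants in the definition of uniform hyperbolicity of $A_2$. The claim will be the precise statement that $|\lambda_i(\mu)|\ge\lambda$ for every exceptional Lyapunov exponent $\lambda_i(\mu)$ of $A_1$, i.e.\ the uniform gap is the hyperbolicity rate of $A_2$.

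The crucial step is to control the Lyapunov exponents of $A_1$ along periodic orbits. Let $q$ be an $f_1$-periodic point with $f_1^m(q)=q$ and such that $\lambda(\mu_q)>-\infty$ (so that Theorem~\ref{thm:Oseledets} applies to the periodic measure $\mu_q$). By~\eqref{maph}, the point $h(q)$ is $f_2$-periodic of the same period, and by~\eqref{cc1}, since $f_1^{km}(q)=q$, we get $A_1^{km}(q)=L(q)^{-1}A_2^{km}(h(q))L(q)$ for all $k\ge 1$. Because $\norm{L(q)}$ and $\norm{L(q)^{-1}}$ are finite constants independent of $k$, for $v$ in an Oseledets subspace $E_j(q)$ of $A_1$ associated to $\mu_q$ the quantities $\norm{A_1^{km}(q)v}$ and $\norm{A_2^{km}(h(q))(L(q)v)}$ coincide up to multiplicative constants, so $\lim_k\frac{1}{km}\log\norm{A_2^{km}(h(q))(L(q)v)}$ exists and equals $\lambda_j(\mu_q)$. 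Writing $w=L(q)v\neq 0$ and decomposing $w=P(h(q))w+(\Id-P(h(q)))w$, the bounds~\eqref{UH2} and~\eqref{UH3} (the latter in the form $\norm{A_2^n(x)u}\ge\frac1D e^{\lambda n}\norm u$ on $\Ker P(x)$) give $\norm{A_2^{km}(h(q))w}\le De^{-\lambda km}\norm w$ when $(\Id-P(h(q)))w=0$, and $\liminf_k\frac{1}{km}\log\norm{A_2^{km}(h(q))w}\ge\lambda$ otherwise. Hence $\lambda_j(\mu_q)\le-\lambda$ or $\lambda_j(\mu_q)\ge\lambda$ for every $j$; in particular $\gamma_i(q)\in(-\infty,-\lambda]\cup[\lambda,\infty)$ for every $i$ and every such periodic point $q$.

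Finally I would invoke Theorem~\ref{the: main}: given an ergodic $f_1$-invariant measure $\mu$ and any $s\in\mathbb N\cap[1,l(\mu)]$, there is a sequence of periodic points $(p_k)$ with $\gamma_i(p_k)\to\gamma_i(\mu)$ for all $i\le d_1(\mu)+\ldots+d_s(\mu)$; since $\gamma_i(\mu)$ is finite, for large $k$ the measures $\mu_{p_k}$ have at least that many finite exponents, so the previous step applies and each $\gamma_i(p_k)$ lies in the closed set $(-\infty,-\lambda]\cup[\lambda,\infty)$. Therefore the limit $\gamma_i(\mu)$ lies in that set as well, and letting $s$ range over $\mathbb N\cap[1,l(\mu)]$ exhausts all exceptional Lyapunov exponents of $A_1$ with respect to $\mu$ (recall $\kappa(\mu)=-\infty$). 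Thus $|\lambda_i(\mu)|\ge\lambda$ for every $i$, uniformly in $\mu$, which is the assertion. The only genuinely delicate point is the interaction of the conjugacy with the base dynamics; as indicated, restricting to periodic orbits collapses $L$ to a bounded multiplicative factor that does not affect exponential rates, so beyond this observation and the bookkeeping of multiplicities I do not anticipate a serious obstacle.
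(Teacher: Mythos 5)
Your proof is correct and follows the same route as the paper: you use the conjugacy identity~\eqref{cc2} at periodic points to transfer Lyapunov exponents of $A_1$ along periodic orbits to those of the uniformly hyperbolic $A_2$ (so they lie in $(-\infty,-\lambda]\cup[\lambda,\infty)$), and then invoke Theorem~\ref{the: main} to propagate this closed gap to the exponents of every ergodic invariant measure. The only difference is cosmetic: you make explicit the quantitative gap $\lambda$ and the stable/unstable decomposition of $L(q)v$, which the paper's argument leaves implicit in the phrase "uniformly bounded away from zero."
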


\begin{proof}
Observe that it follows from~\eqref{cc} that $A_2(x)$ is a compact operator for each $x\in M_2$. In addition, observe that $x$ is a periodic point with period $p$ 
 for $f_1$ if and only if $h(x)$ is a periodic point with period $p$ for $f_2$. Furthermore, in this case it follows from~\eqref{cc1} that 
 \begin{equation}\label{cc2}
  A_1^{np}(x)=L(x)^{-1} A_2^{np}(h(x))L(x), \quad \text{for $n\in \mathbb N$.}
 \end{equation}
By~\eqref{cc2}, Lyapunov exponents of $A_1$ with respect to a measure which is supported on the orbit of $x$ are the same as Lyapunov exponents of $A_2$ with respect to a measure
which is supported on the orbit of $h(x)$. Hence, since $A_2$ is uniformly hyperbolic, we have that all Lyapunov exponents of $A_1$ with respect to invariant measures supported on periodic orbits are uniformly bounded away from zero. Then,
Theorem~\ref{the: main} implies that the same holds for all Lyapunov exponents. 
\end{proof}

\begin{remark}
We emphasize that we haven't assumed any type of information regarding the asymptotic behaviour of  maps $x\mapsto \lVert L(x)\rVert$ and 
$x\mapsto \lVert L(x)^{-1}\rVert$. If we  were to assume that those maps are tempered with respect to any invariant measure for $f_1$, we could conclude (see~\cite{BP07}) that Lyapunov 
exponents of cocycles $A_1$ and $A_2$ are the same and therefore the conclusion of Theorem~\ref{wct} would hold trivially. 
\end{remark}

\medskip{\bf Acknowledgements.} 
We would like to thank anonymous referees for their helpful comments that helped us improve the paper. 
The first author was partially supported by a CAPES-Brazil postdoctoral fellowship under Grant No. 88881.120218/2016-01 at the University of Chicago.
The second author 
 was supported in part by the 
 Croatian Science Foundation under the project IP-2014-09-2285 and by the University of
Rijeka under the project number 17.15.2.2.01.

\end{document}